\title{The internal languages of univalent categories}
\author{Niels van der Weide\lmcsorcid{0000-0003-1146-4161}}
\address{Institute for Computing and Information Sciences, Radboud University, Nijmegen, The Netherlands}
\email{nweide@cs.ru.nl}
\keywords{categorical semantics,
univalent category theory,
homotopy type theory,
univalent foundations,
dependent type theory}
\newcommand{\link}[1]{\textcolor{blue}{#1}}
\newcommand{\UniMath}{\href{https://github.com/UniMath/UniMath}{\nolinkurl{UniMath}}\xspace}
\newcommand{\CoqLink}{\href{https://rocq-prover.org}{\nolinkurl{Rocq}}\xspace}
\newcommand{\coqdocbasebaseurl}{https://nmvdw.github.io/InternalLanguageUnivCats}
\newcommand{\coqdocbaseurl}{\coqdocbasebaseurl /UniMath.}
\newcommand{\urlhash}{\#}
\newcommand{\coqdocurl}[2]{\coqdocbaseurl #1.html\urlhash #2}
\newcommand{\nolinkcoqident}[1]{\nolinkurl{#1}}
\newcommand{\coqident}{\begingroup\@makeother\#\@coqident}
\newcommand{\@coqident}[3][]{%
	\ifthenelse{\isempty{#2}}%
	{\nolinkcoqident{#3}}%
	{\ifthenelse{\isempty{#1}}%
		{\href{\coqdocurl{#2}{#3}}{\nolinkcoqident{#3}}}%
		{\href{\coqdocurl{#2}{#3}}{\nolinkcoqident{#1}}}}%
	\endgroup}
\newcommand{\conceptDef}[3]{\link{\href{\coqdocurl{#2}{#3}}{\textbf{#1}}}}
\newcommand{\constfont}[1]{\ensuremath{\mathsf{#1}}}
\newcommand{\cat}[1]{\ensuremath{\constfont{#1}}\xspace}
\newcommand{\defeq}{\coloneqq}
\newcommand{\trunc}[1]{\mathopen{}\left\Vert #1\right\Vert \mathclose{}}
\newcommand{\truncmap}[1]{| #1 |}
\newcommand{\idtoequiv}[0]{\cat{idtoequiv}}
\newcommand{\Pred}{\cat{P}}
\newcommand{\Qred}{\cat{Q}}
\newcommand{\UnivU}{\mathcal{U}}
\newcommand{\El}{\cat{El}}
\newcommand{\UnivUP}{\mathcal{U}_*}
\newcommand{\ww}{\overline{w}}
\newcommand{\xx}{\overline{x}}
\newcommand{\yy}{\overline{y}}
\newcommand{\zz}{\overline{z}}
\newcommand{\ff}{\overline{f}}
\renewcommand{\gg}{\overline{g}}
\newcommand{\ffone}{\overline{f_1}}
\newcommand{\ggone}{\overline{g_1}}
\newcommand{\fftwo}{\overline{f_2}}
\newcommand{\ggtwo}{\overline{g_2}}
\newcommand{\hh}{\overline{h}}
\newcommand{\mm}{\overline{m}}
\renewcommand{\l}{\cat{l}}
\renewcommand{\r}{\cat{r}}
\renewcommand{\ll}{\overline{\l}}
\newcommand{\rr}{\overline{\r}}
\newcommand{\FF}{\overline{\F}}
\newcommand{\GG}{\overline{\G}}
\newcommand{\tautau}{\overline{\tau}}
\newcommand{\thetatheta}{\overline{\theta}}
\newcommand{\etaeta}{\overline{\eta}}
\newcommand{\epsilonepsilon}{\overline{\varepsilon}}
\newcommand{\prodMor}[2]{\cat{p}_{#1, #2}}
\newcommand{\prodStable}[3]{#1^{\prod}_{#2, #3}}
\newcommand{\functorProd}[1]{#1_{\prod}}
\newcommand{\id}[1][]{\operatorname{id}_{#1}}
\newcommand{\idd}[1][]{\overline{\operatorname{id}}_{#1}}
\newcommand{\B}{\cat{B}}
\newcommand{\C}{\cat{C}}
\newcommand{\D}{\cat{D}}
\newcommand{\Set}{\cat{Set}}
\newcommand{\Fam}{\cat{Fam}}
\newcommand{\Op}[1]{#1^{\cat{op}}}
\newcommand{\iso}[2]{#1 \mathrel{\cong} #2}
\newcommand{\slice}[2]{#1 / #2}
\newcommand{\isodisp}[3]{#1 \mathrel{\cong_{#3}} #2}
\newcommand{\idtoiso}[2]{\cat{idtoiso}_{#1, #2}}
\newcommand{\relation}[2]{\overline{#1}_{#2}}
\newcommand{\F}{\cat{F}}
\newcommand{\G}{\cat{G}}
\newcommand{\LF}{\cat{L}}
\newcommand{\nt}{\tau}
\newcommand{\ntnt}{\overline{\tau}}
\newcommand{\dob}[2]{{#1}(#2)}
\newcommand{\dobP}[2]{{#1} \> #2}
\newcommand{\dmor}[3]{#1 \rightarrow_{#3} #2}
\newcommand{\dtwo}[3]{#1 \Rightarrow_{#3} #2}
\newcommand{\total}[1]{\int{#1}}
\newcommand{\proj}[1]{\pi_{#1}}
\newcommand{\dinvcell}[3]{#1 \cong_{#3} #2}
\newcommand{\dadjequiv}[3]{#1 \simeq_{#3} #2}
\newcommand{\fiber}[2]{#1[#2]}
\newcommand{\fiberfunctor}[2]{#1[#2]}
\newcommand{\functorfiber}[1]{#1^*}
\newcommand{\factmor}[1]{[#1]}
\newcommand{\reindex}[2]{#1^*(#2)}
\newcommand{\CwFFunc}[0]{\cat{T}}
\newcommand{\Ty}[0]{\cat{Ty}}
\newcommand{\Tm}[0]{\cat{Tm}}
\newcommand{\Tms}[2]{\Tm_{#1}(#2)}
\newcommand{\FTms}[1]{#1_{\Tm}}
\newcommand{\ECtx}[0]{\langle \rangle}
\newcommand{\ECtxMap}[1][]{\diamond_{#1}}
\newcommand{\CtxExt}[2]{#1 . #2}
\newcommand{\CtxPr}[1][]{\pi_{#1}}
\newcommand{\PairSub}[2]{\langle #1 , #2 \rangle}
\newcommand{\var}[0]{\cat{q}}
\newcommand{\substTy}[2]{#1^*(#2)}
\newcommand{\substTyMor}[1]{\overline{#1}}
\newcommand{\substTm}[2]{#1^*(#2)}
\newcommand{\substFunctor}[1]{#1_{\cat{sub}}}
\newcommand{\coeTm}[2]{#1 \mathbin{\uparrow} #2}
\newcommand{\compfunctor}[0]{\chi}
\newcommand{\ArrD}[1]{#1^{\downarrow}}
\newcommand{\Arr}[1]{#1^{\rightarrow}}
\newcommand{\cod}[0]{\cat{cod}}
\newcommand{\FibTerm}[1][]{\mathds{1}_{#1}}
\newcommand{\DemTy}[1]{\overline{#1}}
\newcommand{\DemIso}[1]{\gamma_{#1}}
\newcommand{\DepSum}[1]{\sum_{#1}}
\newcommand{\DepSumType}[2]{\sum_{#1}#2}
\newcommand{\DepProd}[1]{\prod_{#1}}
\newcommand{\ExtIdS}{=_{\cat{ext}}}
\newcommand{\ExtId}[2]{#1 =_{\cat{ext}} #2}
\newcommand{\Empty}{\mathds{O}}
\newcommand{\Term}{\mathbb{1}}
\newcommand{\FTerm}[1]{#1_{\ECtx}}
\newcommand{\Nat}{\mathbb{N}}
\newcommand{\NatC}{\mathbb{n}}
\newcommand{\Quot}{\cat{Quot}}
\newcommand{\DisjSum}{+}
\newcommand{\FNat}[1]{#1_{\Nat}}
\newcommand{\FOmega}[1]{#1_{\Omega}}
\newcommand{\twocell}{\Rightarrow}
\newcommand{\whiskerl}{\vartriangleleft}
\newcommand{\whiskerr}{\vartriangleright}
\newcommand{\lunitor}[0]{\lambda}
\newcommand{\linvunitor}[0]{\lambda^{-1}}
\newcommand{\runitor}[0]{\rho}
\newcommand{\lassociator}[0]{\alpha}
\newcommand{\rassociator}[0]{\alpha^{-1}}
\newcommand{\adjequiv}[2]{#1 \mathrel{\simeq} #2}
\newcommand{\biequiv}[2]{#1 \mathrel{\simeq} #2}
\newcommand{\invcell}[2]{#1 \mathrel{\cong} #2}
\newcommand{\adjunction}[4]{#1 \nsststile{#4}{#3} #2}
\newcommand{\identitor}[1]{#1_{\cat{i}}}
\newcommand{\compositor}[1]{#1_{\cat{c}}}
\newcommand{\UnivCat}{\cat{UnivCat}}
\newcommand{\FinLim}{\cat{FinLim}}
\newcommand{\LCCC}{\cat{LCCC}}
\newcommand{\LCCCD}{\cat{LCCC}_{\cat{D}}}
\newcommand{\LPropD}[1]{\cat{FinLim}^{#1}_{\cat{D}}}
\newcommand{\subbicat}[2]{\cat{Sub}(#1, #2)}
\newcommand{\fullsubbicat}[1]{\cat{FullSub}(#1)}
\newcommand{\subbicatD}[2]{\cat{DSub}(#1, #2)}
\newcommand{\fullsubbicatD}[1]{\cat{DFullSub}(#1)}
\newcommand{\sigmabicat}[2]{\sum_{#1} #2}
\newcommand{\idtoadjequiv}[2]{\cat{idtoiso}^{2, 0}_{#1, #2}}
\newcommand{\idtoinvcell}[2]{\cat{idtoiso}^{2, 1}_{#1, #2}}
\newcommand{\idtoinvcelldisp}[2]{\overline{\cat{idtoiso}}^{2, 1}_{#1, #2}}
\newcommand{\modif}[2]{\xymatrix@C=1pc{#1 \ar@3{->}[r] & #2}}
\newcommand{\TerminalObjectDisp}{\cat{T}}
\newcommand{\UnivCatT}{\cat{UnivCat}_{\TerminalObjectDisp}}
\newcommand{\UnivDispCat}{\cat{UnivDispCat}}
\newcommand{\UnivDispCatT}{\cat{UnivDispCat}_{\TerminalObjectDisp}}
\newcommand{\CleavingT}{\cat{Cleaving}_{\TerminalObjectDisp}}
\newcommand{\CompCat}{\cat{CompCat}_{\cat{lax}}}
\newcommand{\FullCompCat}{\cat{FullCompCat}}
\newcommand{\DFullCompCat}{\FullCompCat_{\cat{D}}}
\newcommand{\DProdType}{\cat{Prod}_{\cat{D}}}
\newcommand{\DEqualizer}{\cat{Eq}_{\cat{D}}}
\newcommand{\DUnit}{\cat{Unit}_{\cat{D}}}
\newcommand{\SigmaType}{\cat{Sigma}_{\cat{D}}}
\newcommand{\Democracy}{\cat{Dem}_{\cat{D}}}
\newcommand{\DFLCompCat}{\cat{DFLCompCat}}
\newcommand{\DFLCompCatD}{\cat{DFLCompCat}_{\cat{D}}}
\newcommand{\DFLCompCatPi}{\cat{DFLCompCat}^{\prod}}
\newcommand{\DFLCompCatPiD}{\cat{DFLCompCat}^{\prod}_{\cat{D}}}
\newcommand{\DFLCompCatPropD}[1]{\cat{DFLCompCat}^{#1}_{\cat{D}}}
\newcommand{\LocalProp}{\cat{P}}
\newcommand{\LocalPropCatT}[1]{#1_{\cat{Cat}}}
\newcommand{\LocalPropFunctorT}[1]{#1_{\cat{Fun}}}
\newcommand{\LocalPropCat}[2]{\LocalPropCatT{#1}(#2)}
\newcommand{\LocalPropFunctor}[2]{\LocalPropFunctorT{#1}(#2)}
\newcommand{\LocalAnd}[2]{#1 \land #2}
\newcommand{\StrictInitial}{\cat{Initial}^{\cat{strict}}}
\newcommand{\StableCoproduct}{\cat{Coproduct}^{\cat{stable}}}
\newcommand{\Extensive}{\cat{Extensive}}
\newcommand{\Regular}{\cat{Regular}}
\newcommand{\Exact}{\cat{Exact}}
\newcommand{\SubobjClass}{\cat{SubObjClass}}
\newcommand{\NNO}{\cat{NNO}^{\cat{P}}}
\newcommand{\Pretop}{\cat{PreTop}}
\newcommand{\APretop}{\cat{PreTop}_{\mathbb{N}}}
\newcommand{\PiPretop}{\cat{PreTop}_{\prod}}
\newcommand{\Eltop}{\cat{ElemTop}}
\newcommand{\EltopNNO}{\cat{ElemTop}_{\mathbb{N}}}
\newcommand{\PretopP}{\cat{Is}{\Pretop}}
\newcommand{\APretopP}{\cat{Is}{\APretop}}
\newcommand{\EltopP}{\cat{Is}{\Eltop}}
\newcommand{\EltopNNOP}{\cat{Is}{\EltopNNO}}
\newcommand{\PretopCC}{\cat{CompCat}_{\Pretop}}
\newcommand{\APretopCC}{\cat{CompCat}_{\APretop}}
\newcommand{\PiPretopCC}{\cat{CompCat}_{\PiPretop}}
\newcommand{\EltopCC}{\cat{CompCat}_{\Eltop}}
\newcommand{\EltopNNOCC}{\cat{CompCat}_{\EltopNNO}}
\newcommand{\FinLimToCompCat}{\cat{H}}
\newcommand{\CompCatToFinLim}{\cat{U}}
\newcommand{\FinLimCompCatUnit}{\xi}
\newcommand{\FinLimCompCatCounit}{\zeta}
\newcommand{\FinLimToCompCatPi}{\overline{\cat{H}}_{\prod}}
\newcommand{\CompCatToFinLimPi}{\overline{\cat{U}}_{\prod}}
\newcommand{\FinLimCompCatUnitPi}{\overline{\xi}_{\prod}}
\newcommand{\FinLimCompCatCounitPi}{\overline{\zeta}_{\prod}}
\newcommand{\FinLimOb}{\FinLim^{\cat{ob}}}
\newcommand{\FinLimEl}{\FinLim^{\cat{el}}}
\newcommand{\FinLimUniv}{\FinLim^{\cat{Univ}}}
\newcommand{\CatUniv}{\cat{u}}
\newcommand{\FunctorUniv}[1]{#1_\cat{u}}
\newcommand{\CatEl}{\cat{el}}
\newcommand{\CatElM}{\cat{el}^{\cat{mor}}}
\newcommand{\CatElSub}{\cat{el}^{\cat{sub}}}
\newcommand{\CatElEq}{\cat{el}_{\cat{eq}}}
\newcommand{\FunctorEl}[1]{#1_{\cat{el}}}
\newcommand{\CompCatUnivSub}[1]{\cat{u}_{#1}}
\newcommand{\DFLCompCatOb}{\DFLCompCat^{\cat{ob}}}
\newcommand{\DFLCompCatEl}{\DFLCompCat^{\cat{el}}}
\newcommand{\DFLCompCatUniv}{\DFLCompCat^{\cat{Univ}}}
\newcommand{\CompFunctorUniv}[1]{#1_\cat{u}}
\newcommand{\CatUnivC}[1][]{\overline{\CatUniv}_{#1}}
\newcommand{\CompFunctorUnivC}[1]{#1_{\CatUnivC}}
\newcommand{\CompCatElTy}{\cat{el}}
\newcommand{\CompCatElSub}{\cat{el}^{\cat{sub}}}
\newcommand{\CompCatElEq}{\cat{el}_{\cat{eq}}}
\newcommand{\CompCatFunctorEl}[1]{#1_{\cat{el}}}
\newcommand{\FinLimToCompCatUniv}{\overline{\cat{H}}_{\cat{univ}}}
\newcommand{\CompCatToFinLimUniv}{\overline{\cat{U}}_{\cat{univ}}}
\newcommand{\FinLimCompCatUnitUniv}{\overline{\xi}_{\cat{univ}}}
\newcommand{\FinLimCompCatCounitUniv}{\overline{\zeta}_{\cat{univ}}}
\newcommand{\MorToTm}[1]{\widetilde{#1}}
\newcommand{\TmToMor}[1]{\overline{#1}}
\newcommand{\codeU}{\cat{c}}
\newcommand{\isoU}{\cat{i}}
\newcommand{\NatCode}{\codeU^{\Nat}}
\newcommand{\NatCodeCtx}{\overline{\codeU^{\Nat}}}
\newcommand{\NatIso}{\isoU^{\Nat}}
\newcommand{\NatCodeTm}{\overline{\codeU^{\Nat}}}
\newcommand{\NatIsoTm}{\overline{\isoU^{\Nat}}}
\newcommand{\SubObjCode}{\codeU^{\Omega}}
\newcommand{\SubObjIso}{\isoU^{\Omega}}
\newcommand{\ResizeCode}{\codeU^{\cat{resize}}}
\newcommand{\ResizeIso}{\isoU^{\cat{resize}}}
\newcommand{\SigmaCode}{\codeU^{\sum}}
\newcommand{\SigmaIso}{\isoU^{\sum}}
\newcommand{\PiCode}{\codeU^{\prod}}
\newcommand{\PiIso}{\isoU^{\prod}}
\newcommand{\EltopNNOUniv}{\cat{ElemTop}_{\mathbb{N}, \CatUniv}}
\newcommand{\EltopNNOCCUniv}{\cat{DFLCompCat}_{\EltopNNO, \CatUniv}}
\newcommand{\FinLimToCompCatUnivTop}{\cat{H}_{\CatUniv}}
\newcommand{\CompCatToFinLimUnivTop}{\cat{U}_{\CatUniv}}
\newcommand{\FinLimCompCatUnitUnivTop}{\xi_{\CatUniv}}
\newcommand{\FinLimCompCatCounitUnivTop}{\zeta_{\CatUniv}}
\newcommand{\EltopNNOUnivNat}{\cat{Univ}_{\Nat}}
\newcommand{\EltopNNOUnivOmega}{\cat{Univ}_{\Omega}}
\newcommand{\EltopNNOUnivResize}{\cat{Univ}_{\cat{resize}}}
\newcommand{\EltopNNOUnivSigma}{\cat{Univ}_{\sum}}
\newcommand{\EltopNNOUnivProd}{\cat{Univ}_{\prod}}
\newcommand{\EltopNNOTopUnivD}{\cat{ElemTop}_{\mathbb{N}, \cat{univ}}^{\cat{D}}}
\newcommand{\EltopNNOCCTopUnivD}{\cat{DFLCompCat}_{\EltopNNO, \cat{univ}}^{\cat{D}}}
\newcommand{\EltopNNOTopUniv}{\cat{ElemTop}_{\mathbb{N}, \cat{univ}}}
\newcommand{\EltopNNOCCTopUniv}{\cat{DFLCompCat}_{\EltopNNO, \cat{univ}}}
\newtheoremstyle{linkthm}
  {6pt}
  {6pt}
  {\itshape}
  {}
  {\bfseries}
  {{\bfseries .}}
  {5pt plus 1pt minus 1pt}%
  {\link{\href{#3}{\thmname{#1} \thmnumber{#2}}}}
\theoremstyle{linkthm}
\newtheorem{propL}[thm]{Proposition}
\newtheorem*{thrm*}{Theorem}
\crefname{prop}{Proposition}{Propositions}
\crefname{propL}{Proposition}{Propositions}
\crefname{lem}{Lemma}{Lemmata}
\crefname{thrm}{Theorem}{Theorems}
\crefname{cor}{Corollary}{Corollaries}
\theoremstyle{theorem}
\newtheorem{problem}[thm]{Problem}
\crefname{problem}{Problem}{Problems}
\theoremstyle{definition}
\crefname{defi}{Definition}{Definitions}
\Crefname{defi}{Definition}{Definitions}
\crefname{rem}{Remark}{Remarks}
\Crefname{rem}{Remark}{Remarks}
\crefname{exa}{Example}{Examples}
\Crefname{exa}{Example}{Examples}
\crefname{axiom}{Axiom}{Axioms}
\Crefname{axiom}{Axiom}{Axioms}
\crefname{definition}{definition}{definitions}
\Crefname{definition}{Definition}{Definitions}
\crefname{proposition}{proposition}{propositions}
\Crefname{proposition}{Proposition}{Propositions}
\crefname{lemma}{lemma}{lemmas}
\Crefname{lemma}{Lemma}{Lemmas}
\crefname{corollary}{corollary}{corollaries}
\crefname{corollary}{Corollary}{Corollaries}
\crefname{theorem}{theorem}{theorems}
\Crefname{theorem}{Theorem}{Theorems}
\crefname{example}{example}{examples}
\Crefname{example}{Example}{Examples}
\newcounter{constructionCounter}
\renewcommand{\theconstructionCounter}{\arabic{section}.\arabic{propL}}
\newenvironment{construction}[2]
{\stepcounter{propL}\refstepcounter{constructionCounter}\noindent{\bfseries\link{\href{#1}{Construction \theconstructionCounter}}} {(for Problem~\ref{#2})}{\bfseries .}} 
  { \qed \par\vspace{5pt plus 1pt minus 1pt}%
   }
\crefname{constructionCounter}{Construction}{Constructions}
\Crefname{constructionCounter}{Construction}{Constructions}
\begin{document}

\begin{abstract}
Internal language theorems are fundamental in categorical logic,
since they express an equivalence between syntax and semantics.
One of such theorems was proven by Clairambault and Dybjer,
who corrected the result originally by Seely.
More specifically,
they constructed a biequivalence
between the bicategory of locally Cartesian closed categories
and the bicategory of democratic categories with families with extensional identity types, $\sum$-types, and $\prod$-types.
This theorem expresses that the internal language of locally Cartesian closed categories is extensional Martin-L\"of type theory with dependent sums and products.
In this paper, we study the theorem by Clairambault and Dybjer for univalent categories,
and we extend it to various classes of toposes,
among which are $\prod$-pretoposes,
elementary toposes,
and elementary toposes with a universe.
The results in this paper have been formalized using the proof assistant Rocq and the UniMath library.

\end{abstract}

\maketitle

\tableofcontents

\section{Introduction}
\label{sec:introduction}
\textbf{Martin-L\"of type theory} serves as a foundation for (constructive) mathematics~\cite{martin-lof:1984a},
and it is used as the language of various proof assistants, such as Rocq~\cite{team:2024}, Lean~\cite{demoura:2015}, and Agda~\cite{agda}.
The basic notions of Martin-L\"of type theory are terms and types.
Terms represent the constructions in our system and every term has a type that specifies the construction.
For instance, we have function types, of which the terms are functions,
and we have product types, of which the terms are pairs.
One can interpret logic in type theory using the proposition-as-types,
where one views propositions as types and proofs as terms~\cite{howard:1980}.

An interesting feature of type theory is its treatment of equality.
More specifically, Martin-L\"of type theory comes with an internal and external notion of equality,
namely propositional and definitional equality.
Following the propositions-as-types interpretation,
we represent propositional equality as a type,
which often is called the \textbf{identity type}.
Terms of that type are identifications of two terms.
Definitional equality on the other hand,
is not represented by a type,
but instead it is an equivalence relation on terms
such that one can mechanically verify whether two terms are definitionally equal.
From the point of view of proof assistants, one could view it as follows:
the user has to deal with propositional equality,
whereas the proof assistant deals with definitional equality.

We can roughly distinguish two flavors of Martin-L\"of type theory
depending on how we treat the identity type.
The first possibility is \textbf{extensional type theory}.
The distinguishing feature of extensional type theory is the reflection rule,
which says that these two notions of equality coincide.
More precisely, if we can prove that two terms are propositionally equal,
then these terms also are definitionally equal.
While this rule implies principles such as function extensionality and uniqueness of identity proofs,
it comes at a cost: type checking is not decidable for extensional type theory~\cite{hofmann:1997a}.

The other possibility is \textbf{intensional type theory},
which is Martin-L\"of type theory without the reflection rule.
By dropping this rule, we gain decidable type checking.
However, the cost is that we lose extensionality principles,
and thus the identity type becomes less specified.
For instance,
neither function extensionality nor uniqueness of identity proofs are provable in intensional type theory~\cite{hofmann:1998}.

\paragraph*{Univalent Foundations.}
Univalent foundations~\cite{program:2013} is a version of intensional Martin-L\"of type theory
where identity types are more extensional compared to plain intensional type theory
and in which type checking is still decidable.
The distinguishing feature of univalent foundations is the \textbf{univalence axiom}.
This axiom allows us to identify types whenever we have an equivalence between them,
and thus two types have the same properties if they are equivalent.
Not only types, but also functions have a more extensional identity in this setting,
since function extensionality can be derived from the univalence axiom.

However, the univalence axiom contradicts uniqueness of identity proofs.
Uniqueness of identity proofs says that identity types are trivial,
since it says that objects can only be identified in at most one way.
If we assume the univalence axiom,
types can be identified in multiple ways,
since there can be different isomorphisms between types.
For this reason, we view types differently in univalent foundations:
instead of viewing types as sets, we see \textbf{types as spaces}, terms as points, and identities as paths~\cite{program:2013}.
One can make this precise by giving a model of type theory using simplicial sets~\cite{kapulkin:2021}.

The power of the univalence axiom comes from the fact that identity types are more expressive.
More specifically, one can use the univalence axiom to prove \textbf{structure identity principles} for various mathematical structures.
Such principles say, for instance,
that identity of algebraic structures, like groups and rings, is the same as isomorphism~\cite{coquand:2013,ahrens:2021a}.
Structure identity principles thus allow us to treat isomorphic objects as equal,
and they imply that isomorphic structures have the same properties.
These principles capture the informal mathematical practice of using isomorphic objects interchangeably,
and they allow us to treat structures independent of their representation.
    
\paragraph*{Univalent Categories.}
Structure identity principles become especially interesting when considering \textbf{(higher) categorical structures}.
Categorical structures can be studied up to multiple notions of equivalence,
and we can incarnate them in multiple ways in univalent foundations.
More specifically,
every notion of equivalence of some categorical structure
gives rise to a notion of univalent structure
that satisfies a suitable structure identity principle~\cite{rasekh:2024}.

To see this phenomenon in action, let us consider the example of categories.
We can study categories up to two notions of equivalence.
The first possibility is given by \textbf{isomorphisms of categories}.
Isomorphisms give us a strict notion of equivalence of categories,
because they identify objects up to strict equality.
This way the only information carried by identity proofs of objects is the fact they are equal.

The other possibility is given by \textbf{adjoint equivalences}.
Adjoint equivalences are weaker in nature than isomorphisms,
because they identify objects up to isomorphism
rather than the stricter notion of equality.
By viewing objects up to isomorphism,
identity of objects inherently carries an isomorphism,
and, as a result, we can identify objects potentially in multiple ways.

The notion of \textbf{univalent category}~\cite{ahrens:2015} allows us
to study categories up to adjoint equivalence in univalent foundations.
Univalent categories are defined to be categories
in which identity of objects corresponds to isomorphism.
The structure identity principle for such categories says
that identity is the same as adjoint equivalent.
Note that this principle reflects the common practice in category theory of viewing equivalent categories as the same.
In addition, while univalent categories are quite rare in set-theoretic foundations,
they naturally occur in univalent foundations.
This is because
the aforementioned structure identity principles give us many instances of univalent categories,
such as the category of groups or rings.
Finally, a large part of category theory can constructively be developed in univalent foundations using univalent categories.

\paragraph*{Categorical Theory and Logic.}
Category theory is useful to logic and type theory,
because one can use \textbf{categorical semantics} to reason about the metatheory of logical systems
and to prove the independence of various axioms.
For instance,
one can use sheaf models to show that various analytical principles are independent from intuitionistic higher-order logic~\cite{fourman:1979a},
and one can construct a category in which the Church-Turing thesis holds~\cite{hyland:1982}.
Category theory can also be used to prove the consistency of cubical type theory~\cite{bezem:2013},
homotopy type theory~\cite{kapulkin:2021},
and the calculus of constructions~\cite{reus:1999a}.

Of particular importance to categorical logic, are \textbf{internal language theorems}.
Such theorems express an equivalence between the collection of theories and models.
More specifically,
every theory gives rise to a model (i.e., completeness),
every model gives rise to a theory (i.e., the internal language),
and going back and forth is isomorphic to the identity.
These assignments also are functorial,
meaning that they also act on translations between theories and morphisms of models.
One internal language theorem was proven by Seely~\cite{seely:1984},
who constructed an equivalence between the categories of locally Cartesian closed categories
and of extensional Martin-L\"of type theories with $\sum$- and $\prod$-types.
This theorem tells us that locally Cartesian closed categories give us models of type theory,
and that we can use type theory to reason internally to such a category.

However, Seely's proof suffers from a defect
caused by a divergence between category theory,
where one generally works up to isomorphism,
and type theory,
where one typically requires strict equalities of types.
In a locally Cartesian closed category, Seely interprets \textbf{substitution as pullback}.
However, since pullbacks are only determined up to isomorphism,
substitution rules only hold up to isomorphism in the model.
As a consequence, not every rule of type theory is validated in that model,
because subsitution rules are required to hold up to strict equality.

Hofmann~\cite{hofmann:1994} rectified Seely's interpretation of type theory in locally Cartesian closed categories
using ideas from B\'enabou~\cite{benabou:1985}.
To understand these ideas, we phrase them in terms of fibrations and \textbf{split fibrations}.
We can see the general picture as follows:
fibrations give us a notion of context and dependent type
together with a substitution operation on types that is determined up to isomorphism.
Split fibrations are those fibrations for which the substitution laws actually hold up to equality.
We can soundly interpret Martin-L\"of type theory using split fibrations,
because substitution laws in type theory are required as equalities.
The main idea behind the rectification is thus to replace a fibration by an equivalent split fibration,
which is possible in general \cite[Corollary 5.2.5]{jacobs:2001}.

Since we replace fibrations by split fibrations that only are equivalent,
we lose the equivalence of categories that Seely originally stated.
This is because equivalences of categories require that going back and forth gives you something isomorphic to the input,
but, due to the replacement, we only get something that is adjoint equivalent, which is weaker.
To recover the internal language theorem, Clairambault and Dybjer~\cite{clairambault:2014} phrased it as a biequivalence between 2-categories instead.
As such, we can view Martin-L\"of type theory as the \textbf{internal language up to isomorphism} of locally Cartesian closed categories.

In this paper, we prove the internal language theorem by Clairambault and Dybjer~\cite{clairambault:2014}
in univalent foundations using univalent categories.
Since univalent categories are identified up to adjoint equivalence,
we conclude that Martin-L\"of type theory is the internal language of locally Cartesian closed categories.
We also extend their theorem to various classes of toposes
building forth upon the soundness and completeness theorem by Maietti~\cite{maietti:2005}.
\Cref{tab:equivalences} gives an overview of the internal language theorems that we consider.
Note that when we say topos, we mean elementary topos.
In this table the sum type $A \DisjSum B$ comes with a disjointness axiom,
and the type $\Omega$ contains all types with at most one inhabitant (i.e., a type of all propositions).
We do not require the universe $\CatUniv$ in a category with a universe to be closed under any type former.
However,
the universe $\CatUniv$ in an elementary topos with $\Nat$ and a universe is required to contain the natural numbers and subobject classifier
and to be closed under propositional resizing, $\sum$-types, and $\prod$-types~\cite{streicher:2005}.
We leave the development of a suitable syntax and the initial model as future work.

\begin{table}[]
\begin{center}
\begin{tabular}{c | c | c}
Category Theory                                & Type Theory                                                                                      & Biequivalence             \\
\hline
Finite Limits                                  & $\Term, \times, \ExtIdS, \sum$                                                                   & \Cref{sec:fin-lim}        \\
Locally Cartesian Closed                       & $\Term, \times, \ExtIdS, \sum, \prod$                                                            & \Cref{sec:lccc}           \\
Pretopos                                       & $\Empty, \Term, \times, \ExtIdS, \sum, \DisjSum, \Quot$                                          & \Cref{sec:topos}          \\
Arithmetic Pretopos                            & $\Empty, \Term, \times, \ExtIdS, \sum, \DisjSum, \Quot, \Nat$                                    & \Cref{sec:topos}          \\
$\prod$-Pretopos                               & $\Empty, \Term, \times, \ExtIdS, \sum, \prod, \DisjSum, \Quot$                                   & \Cref{sec:topos}          \\
Topos                                          & $\Empty, \Term, \times, \ExtIdS, \sum, \prod, \DisjSum, \Quot, \Omega$                           & \Cref{sec:topos}          \\
Topos with $\Nat$                              & $\Empty, \Term, \times, \ExtIdS, \sum, \prod, \DisjSum, \Quot, \Omega, \Nat$                     & \Cref{sec:topos}          \\
Categories with a Universe                     & $\Term, \times, \ExtIdS, \sum, \CatUniv$                                                         & \Cref{sec:universes}      \\ 
Toposes with $\Nat$ and a Universe             & $\Empty, \Term, \times, \ExtIdS, \sum, \prod, \DisjSum, \Quot, \Omega, \Nat, \CatUniv$           & \Cref{sec:universe-types}
\end{tabular}
\end{center}
\caption{Various classes of categories and their internal language}
\label{tab:equivalences}
\end{table}

\paragraph*{Contributions.}
This paper has four contributions.
Our first contribution is an analogue of the internal language theorem by Clairambault and Dybjer~\cite[Theorem 6.1]{clairambault:2014}
for univalent categories (\Cref{constr:biequiv,constr:lccc-biequiv}).
More specifically,
we show that the bicategories of univalent categories with finite limits
and of univalent locally Cartesian closed categories
are biequivalent to bicategories of univalent comprehension categories that support suitable type formers.
This development gives us a new perspective on the semantics of dependent type theory,
since we do not have to use split fibrations to guarantee soundness.
In addition, various constructions and proofs get simplified if we use univalent categories.

Our second contribution is an extension of the internal language theorems to various classes of toposes (\Cref{exa:internal-language-topos}).
Here we extend the method by Maietti~\cite[Proposition 2.13]{maietti:2005},
which was used to prove soundness and completeness,
to obtain biequivalences.
The additional examples that we consider, are pretoposes, arithmetic pretoposes, $\prod$-pretoposes, elementary toposes, and elementary toposes with a NNO.
Our third contribution is another extension, namely to universe types (\Cref{sec:universes,sec:universe-types}).
Specifically,
we look at Tarski-style universes that are closed under the type formers in an elementary topos~\cite{streicher:2005}.

Our final contribution is that all the constructions and proofs in this paper are formalized in a proof assistant.

\paragraph*{Formalization.}
The results in this paper are formalized in the \CoqLink proof assistant~\cite{team:2024}
using the \UniMath library~\cite{voevodsky:2024}.
The code is incorporated in \UniMath,
and it is available online.
Definitions and theorems are accompanied with links to their corresponding identifier in the formalization.
Links are colored \link{blue}.

Throughout this paper we distinguish between proof relevant constructions
and theorems that are proof irrelevant.
A ``Construction'' for a ``Problem'' is proof relevant,
whereas a ``Proof'' for a ``Proposition'' or ``Theorem'' is proof irrelevant.

\paragraph*{Version History.}
This paper is an extended version of our LICS paper~\cite{weide:2025a}.
Compared to that version, we made the following changes.
\begin{itemize}
  \item We added \Cref{sec:cat-theory-uf} where the necessary preliminaries are given.
  \item In \Cref{sec:models}, we added a description of how type theory is interpreted in comprehension categories.
  \item The development of displayed biequivalences (\Cref{def:disp-psfunctor,def:disp-pstrans,def:disp-biequiv}) is written in higher generality.
  \item We extended the explanations of \Cref{exa:fin-coprod,exa:exact,exa:subobj-class,exa:nno-list}.
  \item We added proofs in \Cref{sec:bicat-comp-cat,sec:type-formers,sec:fin-lim,sec:lccc,sec:topos}.
  \item We added a characterization of propositions in a comprehension category via monomorphisms in \Cref{prop:hprop-mono,prop:hprop-fib-mono},
  and an explanation how a subobject classifier gives rise to a universe of propositions.
  \item We added an extension of the internal language theorem to toposes with a universe in \Cref{sec:universes,sec:universe-types}.
\end{itemize}

\paragraph*{Overview.}
We start in \Cref{sec:cat-theory-uf} by recalling the basics of univalent foundations and how to develop category therein.
In this section, we also recall the notions that are used throughout this paper.
We follow this up in \Cref{sec:models} by discussing categorical models of dependent type theory in univalent foundations.
More specifically, we argue that categories with families are not a suitable model for the internal language of univalent categories,
and we explain why we use comprehension categories instead.

Next we study various bicategories of comprehension categories.
We start in \Cref{sec:bicat-comp-cat} with the bicategory of full univalent comprehension categories (\Cref{def:disp-bicat-full-comp-cat}),
and we prove that it is univalent (\Cref{prop:univ-bicat-full-comp-cat}).
The key idea behind the construction is to use displayed bicategories,
which we recall in the start of \Cref{sec:bicat-comp-cat}.
After that, we consider various type formers for comprehension categories in \Cref{sec:type-formers},
and we construct the bicategory of so-called DFL comprehension categories (\Cref{def:dfl-comp-cat}).
These are full univalent comprehension categories that are democratic
and that support unit types, binary product types, equalizer types, and $\sum$-types.
We also prove that the resulting bicategory is univalent (\Cref{prop:univ-dfl-compcat}),
and we give a sufficient condition for when a morphism of such comprehension categories is an adjoint equivalence (\Cref{prop:dfl-compcat-adjequiv}).

In the final sections of the paper, we study internal language theorems for univalent categories.
We start in \Cref{sec:fin-lim} with the internal language of univalent categories with finite limits,
which is given by extensional Martin-L\"of type theory with unit types, binary product types, and $\sum$-types (\Cref{constr:biequiv}).
More specifically, we construct a biequivalence between the bicategories of univalent categories with finite limits
and of DFL comprehension categories.
In \Cref{sec:lccc} we extend this biequivalence to locally Cartesian closed categories (\Cref{constr:lccc-biequiv}).
The main idea here is to use displayed biequivalences, which we recall in the beginning of that section.
Next we extend this biequivalence using local properties in \Cref{sec:topos}.
This allows us to deduce internal language theorems for various classes of toposes (\Cref{exa:internal-language-topos}).
Finally, we also consider categories with a universe in \Cref{sec:universes,sec:universe-types}.
In those sections, we define a notion of universe in categories with finite limits and comprehension categories,
and we extend the aforementioned biequivalence to toposes with a natural numbers object and a universe (\Cref{constr:topos-universe-biequiv}).
We finish this paper with a discussion of related work in \Cref{sec:related-work},
and we conclude in \Cref{sec:conclusion}.

\section{Category Theory in Univalent Foundations}
\label{sec:cat-theory-uf}
In this section,
we recall \textbf{univalent foundations}~\cite{program:2013},
and the development of category theory therein~\cite{ahrens:2015}.
Univalent foundations is a version of Martin-L\"of type theory~\cite{martin-lof:1982}
in which the \textbf{univalence axiom} holds~\cite[Axiom 2.10.3]{program:2013}.
This axiom allows us to identify types if they are equivalent,
which gives us a more extensional identity of types
while type checking remains decidable.
Note the difference with plain Martin-L\"of type theory,
where types are identified in a fully intensional way
meaning that types are only considered to be equal if they are presented as the same object.

To formulate the univalence axiom precisely,
we use the map $\idtoequiv : A = B \rightarrow A \simeq B$,
which is defined for all types $A, B : \UnivU$ in some universe $\UnivU$~\cite[Lemma 2.10.1]{program:2013}.
This map is defined using path induction,
and it sends the identity path to the identity equivalence.
The univalence axiom states that $\idtoequiv$ is an equivalence of types.

A consequence of the univalence axiom is that identity in univalent foundations is necessarily proof relevant,
because the identity type of the universe is equivalent to the type of equivalences.
It is thus sensible to classify types based on the complexity of their identity type.
The types with the simplest identity types are \textbf{propositions} and \textbf{contractible types}.
A \textbf{proposition} is a type for which every two inhabitants are equal,
and a \textbf{contractible type} is an inhabited proposition.
The identity type of a proposition is trivial, because it is equivalent to the unit type.
We also have the notion of \textbf{set}, which are types $A$ for which $x = y$ is a proposition for all $x, y : A$.
Inhabitants of a set could be equal,
but there always is at most one proof of their equality.
\textbf{Homotopy levels}~\cite[Definition 7.1.1]{program:2013} generalize the notions of contractible types, propositions, and set,
and for each $n \geq -2$ we have a notion of \textbf{$n$-types}.
We say that a type is a $(-2)$-type if it is contractible,
and a type $A$ is an $(n + 1)$-type if the type $x = y$ is an $n$-type for all $x, y : A$.

Another central concept in univalent foundation is given by the \textbf{propositional truncation}.
Given a type $A$, the propositional truncation is a proposition $\trunc{A}$ together with a map $\truncmap{-} : A \rightarrow \trunc{A}$
satisfying the following universal property:
if we have some proposition $P$ and a map $A \rightarrow P$,
then we have a unique map $\trunc{A} \rightarrow P$.

The propositional truncation allows us to distinguish between equipped structure and structure that merely exists,
which is important in mathematics~\cite{castellan:2021}.
For instance, let $T$ be the type of terminal objects in $\C$.
An inhabitant of $T$ equips $\C$ with a terminal object.
The truncation of $T$, on the other hand, expresses the mere existence of a terminal object in $\C$.
Constructively, one can generally not obtain a terminal object from an inhabitant of $\trunc{T}$.

\subsection{Univalent Categories}
Univalent foundations provides an interesting point of view on several areas of mathematics,
and among those is category theory.
Categories can be studied up to two notions of equivalence: isomorphism and adjoint equivalence.
This bifurcation is reflected in univalent foundations where we have two different notions of categories: setcategories and univalent categories~\cite{ahrens:2015}.
Setcategories are those categories whose types of objects form a set,
and univalent categories are those categories in which isomorphisms coincide with equality.
One uses setcategories to study category theory up to isomorphism,
whereas one uses univalent categories to study category theory up to adjoint equivalence~\cite{rasekh:2024}.
More specifically, setcategories are identified if they are isomorphic and univalent categories are identified if they are adjoint equivalent.

Throughout this paper, our focus is on univalent categories.
To define this notion, let us start by recalling categories~\cite[Definition 3.1]{ahrens:2015}.
A category is given by
a type of objects,
a \textbf{set} of morphisms indexed by pairs of objects,
with identity morphisms and a composition operation
satisfying the usual unitality and associativity laws.
We say that a category is \textbf{univalent} if the identity type of objects is equivalent to isomorphism of objects.

\begin{defiC}[{\cite[Definition 3.6]{ahrens:2015}}]
\label[defi]{def:univalent-cat}
Let $\C$ be a category.
For all objects $x, y : \C$ we have a map \conceptDef{$\idtoiso{x}{y}$}{CategoryTheory.Core.Univalence}{idtoiso}, which sends identities $x = y$ to isomorphisms $\iso{x}{y}$.
This map is defined using path induction, and it sends the identity path to the identity isomorphism.
We say that $\C$ is \conceptDef{univalent}{CategoryTheory.Core.Univalence}{is_univalent} if the map $\idtoiso{x}{y}$ is an equivalence of types for all $x, y : \C$.
\end{defiC}

Note that univalence can be defined for a wide variety of categorical structures~\cite{ahrens:2021a}.
By the univalence axiom, various categories, such as the category of sets or groups, are univalent~\cite{coquand:2013}.
Now let us look at some properties of univalent categories that we use throughout this paper.
First, note that the type of objects in a univalent category is not necessarily a set.
A counterexample is given by the category $\Set$ of sets and functions.
The univalence axiom implies that identity of sets corresponds to isomorphism,
and as a consequence, the category $\Set$ is univalent
and the type of sets is not a set itself.
Since identity of objects is equivalent to isomorphism
and since isomorphisms generally only form a set,
we do have that the type of objects is a $1$-type~\cite[Lemma 3.8]{ahrens:2015}.

\begin{propL}[\coqdocurl{CategoryTheory.Core.Univalence}{univalent_category_has_groupoid_ob}]
\label[propL]{prop:hlevel-obj}
The type of objects in a univalent category is a $1$-type.
\end{propL}

Another feature of univalent categories is that uniqueness up to isomorphism corresponds to actual uniqueness.
In category theory, objects such as limits and colimits are specified using universal mapping properties,
and that only guarantees their uniqueness up to isomorphism.
However, if we reason constructively,
there is a drawback to that:
the mere existence of limits in a setcategory
is, in general, insufficient to equip that category with limits
unless we assume the axiom of choice.
If we work with univalent categories,
it is possible to avoid the axiom of choice.
In such categories,
limits are unique up to identity,
which allows us to choose them if they exist.

\begin{prop}
\label[prop]{prop:limits-prop}
Let $\C$ be a univalent category.
\begin{itemize}
  \item[\link{\href{\coqdocurl{CategoryTheory.Limits.Terminal}{isaprop_Terminal}}{\labelitemi}}] The type of terminal objects for $\C$ is a proposition.
  \item[\link{\href{\coqdocurl{CategoryTheory.Limits.BinProducts}{isaprop_BinProduct}}{\labelitemi}}] The type expressing that $\C$ is equipped with binary products, is a proposition.
  \item[\link{\href{\coqdocurl{CategoryTheory.Limits.Equalizers}{isaprop_Equalizer}}{\labelitemi}}] The type expressing that $\C$ is equipped with equalizers, is a proposition.
\end{itemize}
\end{prop}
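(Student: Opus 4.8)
The plan is to prove each of the three items separately, since the statement is a conjunction of three propositionality claims. In each case the strategy is the same: show that the relevant type is a proposition by exhibiting, for any two inhabitants, a canonical identification between them, and then arguing that this identification is unique. The crucial leverage throughout is \Cref{def:univalent-cat}: because $\C$ is univalent, the map $\idtoiso{x}{y}$ is an equivalence, so an isomorphism between two candidate objects transports to an identity between them. This is what lets uniqueness-up-to-isomorphism collapse to genuine uniqueness.

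For the terminal object, I would proceed as follows. Suppose $t_1$ and $t_2$ are both terminal objects of $\C$. By the universal property of terminality, there are unique maps $t_1 \to t_2$ and $t_2 \to t_1$, and standard category theory shows these compose to the respective identities, so we obtain an isomorphism $\iso{t_1}{t_2}$. Since $\C$ is univalent, $\idtoiso{t_1}{t_2}$ is an equivalence, hence we get an identification $t_1 = t_2$ between the underlying objects. To upgrade this to an identification of the full data of \emph{terminal object} — object together with the universal-property witness — I would use that the universal property is itself a proposition: the type expressing that a given object is terminal consists of a contractibility statement on the hom-sets (that each $\mathrm{hom}(x, t)$ is contractible), and contractibility is a proposition. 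Thus the total type of terminal objects is a $\Sigma$-type over objects of a propositional predicate, and the identification of objects extends uniquely to an identification of the full structure.

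The arguments for binary products and equalizers follow the same template but require more care about exactly which data is being asserted. In each case the type in question is a $\Sigma$-type: for binary products, one picks for every pair $x, y$ an object $x \times y$ with projections satisfying a universal property; for equalizers, one picks for every parallel pair an equalizing object with its universal property. The key observation is that each universal mapping property is, fiberwise, a contractibility condition (existence-and-uniqueness of a mediating morphism), hence a proposition. So the overall structure is a dependent product over the diagrams of a $\Sigma$-type whose fiber is an object paired with a propositional witness. Given two such structures, univalence produces identifications of the chosen objects from the mediating isomorphisms, and the propositional witnesses identify automatically; a dependent-product-of-propositions is a proposition, closing the argument.

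\textbf{The main obstacle} I expect is not any single isomorphism construction — those are routine — but rather the bookkeeping that shows each structure type is genuinely a $\Sigma$-type over a \emph{proposition}-valued predicate, so that the transported object-identification determines the rest of the data on the nose. Concretely, one must verify that the ``is a product'' (resp.\ ``is an equalizer,'' ``is terminal'') predicate lands in propositions, which in turn relies on the hom-types being sets (part of the definition of a category) together with the fact that contractibility and ``being an equivalence'' are propositions. Once this is established, \Cref{prop:hlevel-obj} and the univalence of $\C$ do the remaining work, and each item reduces to the general principle that a $\Sigma$-type whose base carries canonical identifications and whose fibers are propositions is itself a proposition. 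I would therefore organize the proof around this single lemma and apply it three times, handling only the diagram-indexing and the universal-property-is-propositional verifications case by case.
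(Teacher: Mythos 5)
Your proposal is correct and follows essentially the same route as the paper: terminal objects (resp.\ products, equalizers) are unique up to isomorphism, and univalence of $\C$ turns that isomorphism into an identification. The paper's proof is terser — it does not spell out, as you do, that the universal-property witness is itself a proposition so that identifying the underlying objects suffices — but that bookkeeping is exactly the elided step, and your treatment of it is right.
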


\begin{proof}
We only consider the case for terminal objects,
since the ideas are the same for binary products and equalizers.
Let $T_1$ and $T_2$ be terminal objects for $\C$.
Since terminal objects are unique up to isomorphism, we have $\iso{T_1}{T_2}$.
We assumed that $\C$ is univalent, so we also have that $T_1 = T_2$.
Hence, the type of terminal objects for $\C$  is a proposition.
\end{proof}

Note that \Cref{prop:limits-prop} generalizes to arbitrary (co)limits.
To illustrate how one can use \Cref{prop:limits-prop},
we explain how we can choose a terminal object in a univalent category that merely has one.
Let us write $T$ for the type of terminal objects in some given univalent category $\C$.
If we assume that $\C$ merely has a terminal object, then we have an inhabitant of $\trunc{T}$.
Since the type $T$ is a proposition,
we have a map $\trunc{T} \rightarrow T$ by the recursion principle of the propositional truncation.
Hence, we also have an inhabitant of $T$, which gives us a terminal object in $\C$.

We see the same phenomenon in the proof that essentially surjective and fully faithful functors are adjoint equivalences.
Suppose that we have a functor $\F : \C_1 \rightarrow \C_2$ that is both essentially surjective and fully faithful.
To show that $\F$ is an adjoint equivalence,
we need to construct a functor $\G : \C_2 \rightarrow \C_1$ that is an inverse of $\F$ up to isomorphism.
For each object $y : \C_2$, we must thus find a preimage $x : \C_1$.
However, these preimages are only unique up to isomorphism in general.
If we assume that $\C_1$ is univalent,
then the preimages are actually unique
and we can construct $\G$ while reasoning constructively.
All in all, we can again avoid the axiom of choice by assuming that the involved categories are univalent,
and we can constructively prove the following proposition~\cite[Lemma 6.15]{ahrens:2015}.

\begin{propL}[\coqdocurl{CategoryTheory.Equivalences.Core}{rad_equivalence_of_cats}]
\label[propL]{prop:eso-ff}
Suppose that we have categories $\C_1$ and $\C_2$ such that $\C_1$ is univalent.
Then a functor $\F : \C_1 \rightarrow \C_2$ is an adjoint equivalence
if $\F$ is essentially surjective and fully faithful.
\end{propL}

Finally, another key property of univalent categories is their structure identity principle~\cite[Theorem 6.17]{ahrens:2015}.
This principle allows us to identify univalent categories if we have an adjoint equivalence between them.
We phrase this structure identity principle using \textbf{univalent bicategories}~\cite[Definition 3.1]{ahrens:2021}.
Recall that a bicategory is given by a type of objects, a type of 1-cells indexed by two objects,
and a \textbf{set} of 2-cells indexed by two 1-cells.
We write $\lunitor : \id \cdot f \twocell f$ and $\runitor : f \cdot \id \twocell f$ for the left and right unitor in a bicategory respectively,
and we write $\lassociator : f \cdot (g \cdot h) \twocell (f \cdot g) \cdot h$ for the associator.
If we have 1-cells $f : x \rightarrow y$ and $g_1, g_2 : y \rightarrow z$
and a 2-cell $\tau : g_1 \twocell g_2$,
then we write $f \whiskerl \tau : f \cdot g_1 \twocell f \cdot g_2$ for the left whiskering.
Analogously, we write $\tau \whiskerr g : f_1 \cdot g \twocell f_2 \cdot g$
for the right whiskering of $\tau : f_1 \twocell f_2$ with a 1-cell $g$.
Univalent bicategories are defined as follows.

\begin{defi}
\label[defi]{def:univalent-bicat}
Let $\B$ be a bicategory.
\begin{itemize}
  \item For all objects $x, y : \B$ we have a map \conceptDef{$\idtoadjequiv{x}{y}$}{Bicategories.Core.Univalence}{idtoiso_2_0}
    sending identities $x = y$ to adjoint equivalences $\adjequiv{x}{y}$.
    The map $\idtoadjequiv{x}{y}$ is defined using path induction,
    and it sends the identity path to the identity adjoint equivalence.
  \item For all 1-cells $f, g : x \rightarrow y$ we have a map \conceptDef{$\idtoinvcell{f}{g}$}{Bicategories.Core.Univalence}{idtoiso_2_1}
    sending identities $f = g$ to invertible 2-cells $\invcell{f}{g}$.
\end{itemize}
We say that $\B$ is \conceptDef{globally univalent}{Bicategories.Core.Univalence}{is_univalent_2_0} if the map $\idtoadjequiv{x}{y}$ is an equivalence for all $x, y : \B$,
and that $\B$ is \conceptDef{locally univalent}{Bicategories.Core.Univalence}{is_univalent_2_1} if the map $\idtoinvcell{f}{g}$ is an equivalence for all $f, g : x \rightarrow y$.
Finally, we say that $\B$ is \conceptDef{univalent}{Bicategories.Core.Univalence}{is_univalent_2} if it is both locally and globally univalent.
\end{defi}

\begin{propL}[\coqdocurl{Bicategories.Core.Examples.BicatOfUnivCats}{univalent_cat_is_univalent_2}]
\label[propL]{prop:bicat-cat-univ}
The bicategory $\UnivCat$ is univalent.
\end{propL}

\Cref{prop:bicat-cat-univ} tells us that the identity type of univalent categories is equivalent to adjoint equivalences,
and that the identity type of functors is equivalent to natural isomorphisms.
This structure identity principle thus allows us to treat adjoint equivalences as identities.
As a consequence,
we can prove some property for all adjoint equivalences
by only verifying it for the identity equivalence,
and we can transfer properties and structure along adjoint equivalences for free.
We make this reasoning principle precise in the following proposition~\cite[Propositions 3.3 and 3.4]{ahrens:2021}.

\begin{prop}
\label[prop]{prop:equivalence-induction}
Let $\B$ be a univalent bicategory.
\begin{itemize}
  \item[\link{\href{\coqdocurl{Bicategories.Core.Univalence}{J_2_0}}{\labelitemi}}] Let $\Pred$ be a type family taking objects $x, y : \B$ and adjoint equivalences $l : \adjequiv{x}{y}$.
    To prove $\Pred(x, y, l)$ for all $x, y : \B$ and $l : \adjequiv{x}{y}$,
    it suffices to prove $\Pred(x, x, \id[x])$ for all $x : \B$.
  \item[\link{\href{\coqdocurl{Bicategories.Core.Univalence}{J_2_1}}{\labelitemi}}] Let $\Pred$ be a type family taking objects $x, y : \B$, 1-cells $f, g : x \rightarrow y$, and invertible 2-cells $\tau : \invcell{f}{g}$.
    To prove $\Pred(x, y, f, g, \tau)$ for all objects $x, y : \B$, 1-cells $f, g : x \rightarrow y$, and invertible 2-cells $\tau : \invcell{f}{g}$,
    it suffices to prove $\Pred(x, y, f, f, \id[f])$ for all $x, y : \B$ and $f : x \rightarrow y$.
\end{itemize}
\end{prop}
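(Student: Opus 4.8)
The plan is to derive both statements as instances of a single principle: transporting ordinary path induction along the univalence equivalences supplied by \Cref{def:univalent-bicat}. The key observation is that an induction principle of this shape is equivalent to a statement about contractibility of a based total space, and univalence lets us replace the total space of adjoint equivalences (resp.\ invertible 2-cells) out of a fixed object (resp.\ 1-cell) by the corresponding based path space, which is contractible by the contractibility of singletons.

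For the first statement, I would fix $x : \B$ and consider the total space $\sum_{y : \B}(\adjequiv{x}{y})$. Since $\B$ is globally univalent, the map $\idtoadjequiv{x}{y}$ is an equivalence $(x = y) \simeq (\adjequiv{x}{y})$ for every $y$, so it is a fiberwise equivalence over $y$ and hence induces an equivalence of total spaces $\sum_{y : \B}(x = y) \simeq \sum_{y : \B}(\adjequiv{x}{y})$. The left-hand side is a singleton and therefore contractible, so the right-hand side is contractible as well. Its center is the image of $(x, p)$ under this equivalence, where $p$ is the identity path on $x$; by the computation rule in \Cref{def:univalent-bicat}, $\idtoadjequiv{x}{x}$ sends the identity path to $\id{x}$, so the center is $(x, \id{x})$. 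Now given a type family $\Pred$ as in the statement, proving $\Pred(x, y, l)$ for all $y$ and $l$ amounts to giving a section of $\Pred(x, -, -)$ over this contractible total space; since a section over a contractible type is determined by its value at the center, it suffices to supply $\Pred(x, x, \id{x})$, which is exactly the base case. Ranging over $x$ completes the argument.

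The second statement is proved by the same pattern, now using local univalence. I would fix $x, y : \B$ and $f : x \rightarrow y$, and consider $\sum_{g : x \rightarrow y}(\invcell{f}{g})$. Local univalence makes $\idtoinvcell{f}{g}$ a fiberwise equivalence $(f = g) \simeq (\invcell{f}{g})$ over $g$, so the total space is equivalent to the singleton $\sum_{g : x \rightarrow y}(f = g)$ and is thus contractible, with center $(f, \id{f})$ by the analogous computation rule for $\idtoinvcell{f}{f}$. Reducing a section over this contractible type to its value at the center yields that it suffices to prove $\Pred(x, y, f, f, \id{f})$.

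I expect the only delicate point to be the identification of the center of the contractible total space with the identity cell: this step relies crucially on the fact that $\idtoadjequiv$ and $\idtoinvcell$ send reflexivity to the identity adjoint equivalence and identity 2-cell, respectively, as recorded in \Cref{def:univalent-bicat}. Everything else is formal: the passage from a fiberwise equivalence to an equivalence of total spaces, the contractibility of singletons, and the fact that a dependent function out of a contractible type is uniquely determined by its value at the center. These are all standard facts of univalent foundations, so the bicategorical content is entirely concentrated in the two univalence hypotheses.
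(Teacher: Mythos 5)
Your proof is correct and is essentially the standard argument that the paper relies on by citation to \citeauthor{ahrens:2021}: global (resp.\ local) univalence makes $\idtoadjequiv{x}{y}$ (resp.\ $\idtoinvcell{f}{g}$) a fiberwise equivalence, so the total space of adjoint equivalences out of $x$ (resp.\ of invertible 2-cells out of $f$) is equivalent to a singleton and hence contractible with center the identity cell, and sections over a contractible type are determined by their value at the center. The paper gives no independent proof, and your identification of the one delicate step (that the center is the identity cell, via the computation rule sending reflexivity to the identity) is exactly right.
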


\subsection{Displayed Categories}
\textbf{Displayed categories}~\cite{ahrens:2019a} are fundamental when studying category theory in univalent foundations.
Their applications are plentiful,
and our main interest stems from Grothendieck fibrations~\cite{grothendieck:1960}.
This is because
if we phrase Grothendieck fibrations in terms of displayed categories,
we can avoid refering to equality of objects when we express the existence of Cartesian lifts.
In the remainder of this section, we recall the concepts related to displayed categories that we use in this paper.

\begin{defi}
\label[defi]{def:disp-cat}
Let $\C$ be a category.
A \conceptDef{displayed category}{CategoryTheory.DisplayedCats.Core}{disp_cat} $\D$ consists of
\begin{itemize}
  \item a type $\dob{\D}{x}$ for each $x : \C$;
  \item a set $\dmor{\xx}{\yy}{f}$ for all objects $\xx : \dob{\D}{x}$ and $\yy : \dob{\D}{y}$, and morphisms $f : x \rightarrow y$.
\end{itemize}
Inhabitants of $\dob{\D}{x}$ are called \textbf{objects over $x$},
and inhabitants of $\dmor{\xx}{\yy}{f}$ are called \textbf{morphisms from $\xx$ to $\yy$ over $f$}.
We also have a morphism $\idd[\xx] : \dmor{\xx}{\xx}{\id{x}}$ for every $\xx : \dob{\D}{x}$,
and if we have morphisms $\ff : \dmor{\xx}{\yy}{f}$ and $\gg : \dmor{\yy}{\zz}{g}$,
then we also have a composition $\ff \cdot \gg : \dmor{\xx}{\zz}{f \cdot g}$.
We require the usual associativity and unitality laws,
which are dependent equaities over the corresponding equality in $\C$~\cite[Definition 3.1]{ahrens:2019a}.

Let $\F : \C \rightarrow \C'$ be a functor,
and let $\D$ and $\D'$ be displayed categories over $\C$ and $\C'$ respectively.
A \conceptDef{displayed functor}{CategoryTheory.DisplayedCats.Functors}{disp_functor} $\FF : \D \rightarrow \D'$ over $\F$ consists of
\begin{itemize}
  \item a map sending objects $\xx : \dob{\D}{x}$ to objects $\FF(\xx) : \dob{\D'}{\F(x)}$ for each $x : \C$;
  \item a map sending morphisms $\ff : \dmor{\xx}{\yy}{f}$ to morphisms $\FF(\ff) : \dmor{\FF(\xx)}{\FF(\yy)}{\F(f)}$
    for all objects $\xx : \dob{\D}{x}$ and $\yy : \dob{\D}{y}$, and morphisms $f : x \rightarrow y$.
\end{itemize}
We also require the functoriality laws~\cite[Definition 3.11]{ahrens:2019a}.

Finally, let $\nt : \F \Rightarrow \G$ be a natural transformation,
and let $\FF : \D \rightarrow \D'$ and $\GG : \D \rightarrow \D'$
be functors over $\F$ and $\G$ respectively.
A \conceptDef{displayed natural transformation}{CategoryTheory.DisplayedCats.NaturalTransformations}{disp_nat_trans} $\tautau : \FF \Rightarrow \GG$ over $\nt$
is given by a family of morphisms $\dmor{\FF(\xx)}{\GG(\xx)}{\nt(x)}$ for each $x : \C$ and $\xx : \dob{\D}{x}$.
We also require that this family of morphisms is natural~\cite[Definition 3.12]{ahrens:2019a}.
\end{defi}

Every displayed category $\D$ over $\C$ gives rise to a category $\total{\D}$, called the \textbf{total category},
and a functor $\proj{\D} : \total{\D} \rightarrow \C$, called the \textbf{projection}.
A displayed functor $\FF : \D \rightarrow \D'$ gives rise to a functor $\total{\FF} : \total{\D} \rightarrow \total{\D'}$
that makes the following square commute up to definitional equality.
\[
\begin{tikzcd}
  {\total{\D}} & {\total{\D'}} \\
  \C & {\C'}
  \arrow["{\total{\FF}}", from=1-1, to=1-2]
  \arrow["{\proj{\D}}"', from=1-1, to=2-1]
  \arrow["{\proj{\D'}}", from=1-2, to=2-2]
  \arrow["\F"', from=2-1, to=2-2]
\end{tikzcd}
\]

In addition, every object $x : \C$ gives rise to a category $\fiber{\D}{x}$,
which we call the \textbf{fiber category}.
The type of objects of $\fiber{\D}{x}$ is $\dob{\D}{x}$,
and morphisms from $\xx : \dob{\D}{x}$ to $\yy : \dob{\D}{x}$
are morphisms $\dmor{\xx}{\yy}{\id[x]}$.
Every displayed functor $\FF : \D \rightarrow \D'$ over $\F$
gives rise to a functor $\fiberfunctor{\FF}{x} : \fiber{\D}{x} \rightarrow \fiber{\D'}{\F(x)}$.

Next we define \textbf{displayed isomorphisms}.
Given objects $\xx : \dob{\D}{x}$ and $\yy : \dob{\D}{y}$, and an isomorphism $f : \iso{x}{y}$,
a displayed isomorphism over $f$ consists of morphisms $\ff : \dmor{\xx}{\yy}{f}$ and $\gg : \dmor{\yy}{\xx}{f^{-1}}$ that compose to the identity.
We write $\isodisp{\xx}{\yy}{f}$ for the type of displayed isomorphisms from $\xx : \dob{\D}{x}$ to $\yy : \dob{\D}{y}$ over $f : \iso{x}{y}$.
Using \Cref{def:disp-cat}, we can define \textbf{displayed adjoint equivalences} over adjoint equivalences in the usual way.

Just like categories,
displayed categories come with a notion of univalence.

\begin{defiC}[{\cite[Definition 5.3]{ahrens:2019a}}]
\label[defi]{def:disp-univalence}
Suppose that we have a displayed category $\D$ over $\C$.
We say that $\D$ is \conceptDef{univalent}{CategoryTheory.DisplayedCats.Univalence}{is_univalent_disp}
if for all objects $x, y : \C$ with $p : x = y$ and objects $\xx : \dob{\D}{x}$ and $\yy : \dob{\D}{y}$ over $x$ and $y$ respectively,
the map $\xx = \yy \rightarrow \isodisp{\xx}{\yy}{\idtoiso{x}{y}(p)}$ defined by path induction is an equivalence of types.
\end{defiC}

If a displayed category $\D$ is univalent,
then each fiber category $\fiber{\D}{x}$ is univalent as well.
If both $\C$ and $\D$ are univalent,
then $\total{\D}$ also is univalent.
The next notion of interest is that of \textbf{cleavings}.
Note that in the following definition we do not refer to equality of objects,
but instead we only talk about objects in $\D$ over some object in $\C$.

\begin{defiC}[{\cite[Example 3.18]{ahrens:2019a}}]
\label[defi]{def:fibration}
Let $\D$ be a displayed category over $\C$.
A morphism $\ff : \dmor{\xx}{\yy}{f}$ over $f : x \rightarrow y$ is said to be \conceptDef{Cartesian}{CategoryTheory.DisplayedCats.Fibrations}{is_cartesian}
if for all objects $\ww : \dob{\D}{w}$ and morphisms $g : w \rightarrow x$ and $\hh : \dmor{\ww}{\yy}{g \cdot f}$
there is a unique morphism $\gg : \dmor{\ww}{\xx}{g}$ making the triangle below commute.
\[
\begin{tikzcd}
  \ww \\
  & \xx & \yy \\
  w & x & y
  \arrow["\gg"', dashed, from=1-1, to=2-2]
  \arrow["\hh", from=1-1, to=2-3]
  \arrow["\ff"', from=2-2, to=2-3]
  \arrow["g"', from=3-1, to=3-2]
  \arrow["f"', from=3-2, to=3-3]
\end{tikzcd}
\]

A \conceptDef{cleaving}{CategoryTheory.DisplayedCats.Fibrations}{cleaving} for $\D$ is a map assigning to each morphism $f : x \rightarrow y$ and $\yy : \dob{\D}{y}$
an object $\xx : \dob{\D}{x}$ and a Cartesian morphism $\ff : \dmor{\xx}{\yy}{f}$.
The morphism $\ff$ is the \textbf{Cartesian lift} of $f$,
and if $\D$ is equipped with a cleaving, then we say that $\D$ is a \textbf{cloven fibration}.

A displayed functor $\FF : \D \rightarrow \D'$ over $\F$ is said to be \conceptDef{Cartesian}{CategoryTheory.DisplayedCats.Fibrations}{is_cartesian_disp_functor} if it maps Cartesian morphisms to Cartesian morphisms.
\end{defiC}

If we have a displayed category $\D$ equipped with a cleaving,
then every morphism $f : x \rightarrow y$ induces a functor $\functorfiber{f} : \fiber{\D}{y} \rightarrow \fiber{\D}{x}$~\cite{jacobs:2001}.
This functor sends every object $\yy : \dob{\D}{y}$ to the domain of the Cartesian lift of $f$ along $\yy$.

Note that we phrased \Cref{def:fibration} as structure on a displayed category.
An alternative would be to only assume that Cartesian lifts merely exist,
which would give rise to the notion of \textbf{fibrations}.
However, since Cartesian lifts are unique up to isomorphism, they are actually unique if $\D$ is univalent.
A univalent displayed category $\D$ is thus a cloven fibration if it is a fibration.

We also use \textbf{fiberwise limits} in displayed categories.
Our focus is on several special cases of fiberwise limits, namely the following.

\begin{defi}
\label[defi]{def:fiberwise-limits}
Let $\D$ be a displayed category over $\C$ with a cleaving.
We say that
\begin{itemize}
  \item $\D$ is equipped with a \conceptDef{fiberwise terminal object}{CategoryTheory.DisplayedCats.Fiberwise.FiberwiseTerminal}{fiberwise_terminal}
    if for all $x : \C$ the category $\fiber{\D}{x}$ is equipped with a terminal object
    and if $\functorfiber{f} : \fiber{\D}{y} \rightarrow \fiber{\D}{x}$ preserves terminal objects for all $f : x \rightarrow y$;
  \item $\D$ is equipped with \conceptDef{fiberwise binary products}{CategoryTheory.DisplayedCats.Fiberwise.FiberwiseProducts}{fiberwise_binproducts}
    if for all $x : \C$ the category $\fiber{\D}{x}$ is equipped with binary products
    and if $\functorfiber{f} : \fiber{\D}{y} \rightarrow \fiber{\D}{x}$ preserves binary products for all $f : x \rightarrow y$;
  \item $\D$ is equipped with \conceptDef{fiberwise equalizers}{CategoryTheory.DisplayedCats.Fiberwise.FiberwiseEqualizers}{fiberwise_equalizers}
    if for all $x : \C$ the category $\fiber{\D}{x}$ is equipped with equalizers
    and if $\functorfiber{f} : \fiber{\D}{y} \rightarrow \fiber{\D}{x}$ preserves equalizers for all $f : x \rightarrow y$.
\end{itemize}
\end{defi}

Since fiber categories are univalent if $\D$ is,
\Cref{prop:limits-prop} allows us to conclude that the type of terminal objects for $\fiber{\D}{x}$ is a proposition if $\D$ is univalent.
We can say the same for binary products and equalizers,
and thus we get an analogue of \Cref{prop:limits-prop} for fiberwise limits.

\begin{prop}
\label[prop]{prop:fib-limits-prop}
Let $\D$ be a univalent displayed category over a category $\C$.
\begin{itemize}
  \item[\link{\href{\coqdocurl{CategoryTheory.DisplayedCats.Fiberwise.FiberwiseTerminal}{isaprop_fiberwise_terminal}}{\labelitemi}}] The type of fiberwise terminal objects for $\D$ is a proposition.
  \item[\link{\href{\coqdocurl{CategoryTheory.DisplayedCats.Fiberwise.FiberwiseProducts}{isaprop_fiberwise_binproducts}}{\labelitemi}}] The type expressing that $\D$ is equipped with fiberwise binary products, is a proposition.
  \item[\link{\href{\coqdocurl{CategoryTheory.DisplayedCats.Fiberwise.FiberwiseEqualizers}{isaprop_fiberwise_equalizers}}{\labelitemi}}] The type expressing that $\D$ is equipped with fiberwise equalizers, is a proposition.
\end{itemize}
\end{prop}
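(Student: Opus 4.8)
The plan is to decompose the type of fiberwise terminal objects into a product of propositions and then invoke closure of propositions under dependent products and sums. Unfolding \Cref{def:fiberwise-limits}, a fiberwise terminal object consists of two ingredients: first, a choice of terminal object in the fiber $\fiber{\D}{x}$ for every $x : \C$, and second, a proof that $\functorfiber{f} : \fiber{\D}{y} \rightarrow \fiber{\D}{x}$ preserves terminal objects for every $f : x \rightarrow y$. Since propositions are closed under dependent products and dependent sums, it suffices to show that each of these two ingredients is propositional.

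For the first ingredient, I would use that each fiber $\fiber{\D}{x}$ of a univalent displayed category is itself univalent, as recalled just before \Cref{def:fibration}. \Cref{prop:limits-prop} then tells us that the type of terminal objects in $\fiber{\D}{x}$ is a proposition for every $x$, and a dependent product of propositions indexed by $x : \C$ is again a proposition. For the second ingredient, the assertion that a functor preserves terminal objects is a proposition independently of univalence, since being terminal is a universal property and hence propositional; taking the dependent product over all $x, y : \C$ and all $f : x \rightarrow y$ therefore yields a proposition as well. Combining the two, the type of fiberwise terminal objects is a product of two propositions, and is thus itself a proposition.

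The arguments for fiberwise binary products and fiberwise equalizers are identical in structure: in each case \Cref{prop:limits-prop} supplies the propositionality of the corresponding type of limits in each (univalent) fiber, the preservation clause is propositional as a universal property, and the dependent products over objects and morphisms preserve propositionality. The only step that genuinely requires univalence is the appeal to \Cref{prop:limits-prop}; everything else is the formal closure of propositions under $\prod$ and $\sum$. I do not expect a real obstacle here. The one mild point to confirm is that the preservation clause does not secretly depend on the chosen limits in a way that would break the clean split into two factors, and since preservation is phrased purely through universal properties rather than through a particular choice, the decomposition goes through.
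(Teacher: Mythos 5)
Your proposal is correct and follows essentially the same route as the paper, which argues (in the text preceding the proposition) that each fiber of a univalent displayed category is univalent and then applies \Cref{prop:limits-prop} fiberwise. Your additional remark that the preservation clause is propositional because it is a universal property is a detail the paper leaves implicit, but it is exactly what the formalized proof does.
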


In this paper, we also use the \textbf{Beck-Chevalley condition}.
This condition is related to interpretation of quantifiers as adjoints in categorical logic~\cite{lawvere:1970,lawvere:2006}.
More specifically, dependent sums and dependent products are interpreted as left adjoints and right adjoints of weakening respectively.
The Beck-Chevalley condition expresses that quantifiers are preserved by substitution.
While we formulate the Beck-Chevalley condition for arbitrary squares of functors,
we only use it in the case that each of the involved functors are of the shape $\functorfiber{s} : \fiber{\D}{y} \rightarrow \fiber{\D}{x}$
for some cloven fibration $\D$ and some morphism $s : x \rightarrow y$.

\begin{defi}
\label[defi]{def:beck-chevalley}
Suppose that we have a square of functors that commutes up to natural isomorphism as follows.
\[
\begin{tikzcd}
  {\C_1} & {\C_2} \\
  {\C_3} & {\C_4}
  \arrow[""{name=0, anchor=center, inner sep=0}, "{\F_1}", from=1-1, to=1-2]
  \arrow["{\G_1}"', from=1-1, to=2-1]
  \arrow["{\G_2}", from=1-2, to=2-2]
  \arrow[""{name=1, anchor=center, inner sep=0}, "{\F_2}"', from=2-1, to=2-2]
  \arrow["\nt", shorten <=4pt, shorten >=4pt, Rightarrow, from=1, to=0]
\end{tikzcd}
\]
In addition, suppose that we have adjunctions $\adjunction{\LF_1}{\G_1}{\eta_1}{\varepsilon_1}$
and $\adjunction{\LF_2}{\G_2}{\eta_2}{\varepsilon_2}$.
This square satisfies the \conceptDef{Beck-Chevalley condition for left adjoints}{CategoryTheory.DisplayedCats.Fiberwise.DependentSums}{left_beck_chevalley}
if the following natural transformation is an isomorphism.
\[
\begin{tikzcd}[column sep = huge]
  {\F_2 \cdot \LF_2} & {\LF_1 \cdot \G_1 \cdot \F_2 \cdot \LF_2} & {\LF_1 \cdot \F_1 \cdot \G_2 \cdot \LF_2} & {\LF_1 \cdot \F_1}
  \arrow["{\eta_1 \whiskerr (\F_2 \cdot \LF_2)}", Rightarrow, from=1-1, to=1-2]
  \arrow["{\LF_1 \whiskerl \nt \whiskerr \LF_2}", Rightarrow, from=1-2, to=1-3]
  \arrow["{(\LF_1 \cdot \F_1) \whiskerl \varepsilon_2}", Rightarrow, from=1-3, to=1-4]
\end{tikzcd}
\]
Analogously, we define the Beck-Chevalley condition for right adjoints.
\end{defi}

Let us finish this section with an example of a displayed category.
The example of interest in this paper is the \textbf{arrow category}.

\begin{exa}[{\cite[Example 3.18]{ahrens:2019a}}]
\label[exa]{exa:arrow-cat}
Let $\C$ be a category.
We define a displayed category \conceptDef{$\ArrD{\C}$}{CategoryTheory.DisplayedCats.Codomain}{disp_codomain} over $\C$ as follows.
\begin{itemize}
  \item The objects over $y : \C$ are pairs $x : \C$ together with a morphism $g : x \rightarrow y$.
  \item The morphisms over $f : y_1 \rightarrow y_2$ from $g_1 : x_1 \rightarrow y_1$ to $g_2 : x_2 \rightarrow y_2$
    are morphisms $h : x_1 \rightarrow x_2$ making the following diagram commute.
    \[
      \begin{tikzcd}
	{x_1} & {x_2} \\
	{y_1} & {y_2}
	\arrow["h", from=1-1, to=1-2]
	\arrow["{g_1}"', from=1-1, to=2-1]
	\arrow["{g_2}", from=1-2, to=2-2]
	\arrow["f"', from=2-1, to=2-2]
      \end{tikzcd}
    \]
\end{itemize}
If $\C$ is univalent, then so is $\ArrD{\C}$.
The total category, which we denote by $\Arr{\C}$, is the arrow category of $\C$,
and the forgetful functor $\proj{\ArrD{\C}} : \Arr{\C} \rightarrow \C$ sends every morphism to its codomain.
The fiber category $\fiber{\ArrD{\C}}{x}$ is equivalent to the slice category $\slice{\C}{x}$.
Cartesian morphisms in $\ArrD{\C}$ correspond to pullback squares,
and if $\C$ has pullbacks,
then we can equip $\ArrD{\C}$ with a cleaving~\cite[Proposition 1.1.6]{jacobs:2001}.
Given a morphism $f : x \rightarrow y$ in $\C$,
the fiber functor $\functorfiber{f} : \slice{\C}{y} \rightarrow \slice{\C}{x}$ sends morphisms $g : w \rightarrow y$ to the pullback of $f$ and $g$.
If $\C$ is finitely complete, then $\ArrD{\C}$ has all fiberwise finite limits.

Every functor $\F : \C_1 \rightarrow \C_2$ gives rise to a displayed functor $\ArrD{\F} : \ArrD{\C_1} \rightarrow \ArrD{\C_2}$ over $\F$.
This functor sends morphisms $g : x \rightarrow y$ to $\F(g) : \F(x) \rightarrow \F(y)$.
Finally,
every natural transformation $\nt : \F \Rightarrow \G$ induces a displayed natural transformation $\ArrD{\nt} : \ArrD{\F} \rightarrow \ArrD{\G}$.
\end{exa}

\section{Models of Type Theory in Univalent Foundations}
\label{sec:models}
Before phrasing internal language theorems of various classes of univalent categories,
we first discuss the categorical semantics of dependent type theory.
In this discussion, our main question is what would be suitable categorical structures to model dependent type theory in univalent categories.
We start by recalling \textbf{categories with families}~\cite[Definition 1]{dybjer:1995},
which were used by Clairambault and Dybjer to prove their internal language theorems~\cite[Theorem 6.1]{clairambault:2014}.
Let us write $\Fam$ for the category whose objects are pairs of a set $A : \Set$
and a family $B : A \rightarrow \Set$.

\begin{defiC}[{\cite[Definition 1]{dybjer:1995}}]
\label[defi]{def:cwf}
A \textbf{category with families} (CwF) consists of
\begin{itemize}
  \item a category $\C$ with a terminal object $\ECtx$;
  \item a functor $\CwFFunc : \Op{\C} \rightarrow \Fam$ that sends $\Gamma : \C$ to a set $\Ty_{\Gamma}$ with a family $\Tm_{\Gamma} : \Ty_{\Gamma} \rightarrow \Set$;
  \item for all $\Gamma : \C$ and $A : \Ty_{\Gamma}$ a context extension $\CtxExt{\Gamma}{A} : \C$.
\end{itemize}
The context comprehension operation needs to satisfy a universal property,
which can be found in the literature~\cite[Definition 2.2]{clairambault:2014}.
\end{defiC}

Here the category $\C$ is the category of contexts and context morphisms,
inhabitants of $\Ty_{\Gamma}$ and $\Tm_{\Gamma}(A)$ represent types in context $\Gamma$ and terms of type $A$ respectively.
Throughout the years CwFs have been used to define various models of type theory such as the cubical sets model~\cite{bezem:2013}.
One key feature of categories with families is that they can be presented as a generalized algebraic theory~\cite{dybjer:1995,bezem:2021}.
As a consequence, one can directly define the initial CwF (i.e., the syntax) using quotient inductive-inductive types~\cite{altenkirch:2016,altenkirch:2018}.
We can also use categories with families as a general framework to define various notions of models in categorical logic~\cite{castellan:2021}.

However, categories with families do not give a suitable framework to describe internal models of homotopy type theory~\cite{kraus:2021},
and in particular the internal language of univalent categories.
To understand why, let us consider the (univalent) category $\Set$ of sets.
One might hope that $\Set$ gives rise to a model of type theory in which types in the empty context are the same as sets,
but this is not the case if we were to use categories with families.
This is because in \Cref{def:cwf} we require there to be a set of types in every context $\Gamma$.
That requirement gets violated by the univalence axiom, which prevents the type of sets from being a set itself.
The same issue occurs when showing that every univalent category with finite limits gives rise to a category with families,
because the type of objects in a univalent category forms, in general, only a 1-type by \Cref{prop:hlevel-obj}.
For the same reason, one needs to assume uniqueness of identity proofs to construct the set model when defining the syntax of type theory as a quotient inductive-inductive type \cite[Section 6]{altenkirch:2016}.

All in all, we need another categorical structure to model dependent types in univalent categories.
In that categorical structure, the collection of types in each context should not be required to form a set.
Various notions of categorical models of dependent types,
such as categories with families~\cite{dybjer:1995}, categories with attributes~\cite{cartmell:1978}, C-systems~\cite{cartmell:1986,voevodsky:2017}, and natural models~\cite{awodey:2018},
do not satisfy this requirement.

\subsection{Comprehension Categories}
One categorical structure for interpreting dependent types that does satisfy our demands,
is given by \textbf{comprehension categories}~\cite{jacobs:1993,jacobs:2001}. 
The notion of comprehension category is centered around that of \textbf{fibrations}~\cite{grothendieck:1960},
which are used to model the category of contexts, the category of types depending on some context, and the substitution operation on types.
The other core ingredient of comprehension categories is given by the \textbf{comprehension functor},
which models context extension.
Following Ahrens and Lumsdaine, we define comprehension categories using displayed categories~\cite[Definition 6.1]{ahrens:2019a}.

\begin{defiC}[{\cite[Definition 4.1]{jacobs:1993}}]
\label[defi]{def:comp-cat}
A \conceptDef{comprehension category}{CategoryTheory.DisplayedCats.ComprehensionC}{comprehension_cat_structure} is given by
\begin{itemize}
  \item a category $\C$ with a terminal object $\ECtx$;
  \item a displayed category $\D$ over $\C$;
  \item a cleaving on $\D$;
  \item a displayed functor $\compfunctor : \D \rightarrow \ArrD{\C}$ over the identity
\end{itemize}
such that $\compfunctor$ preserves Cartesian morphisms.
Pictorially, we represent this data as follows.
\[
\begin{tikzcd}
  {\D} \ar[rr, "{\compfunctor}"] & & \ArrD{\C}
  \\
  &
  {\C}
\end{tikzcd}
\]
A comprehension category is called \textbf{full}
if $\compfunctor$ is a fully faithful functor.
We say that a comprehension category is \textbf{univalent}
if both $\C$ and $\D$ are univalent.
\end{defiC}

Every comprehension category $\compfunctor : \D \rightarrow \ArrD{\C}$
gives rise to a triangle of functors, as depicted below, that commutes up to definitional equality.
\[
\begin{tikzcd}
  {\total{\D}} && {\Arr{\C}} \\
  & \C
  \arrow["{\total{\compfunctor}}", from=1-1, to=1-3]
  \arrow["{\proj{\D}}"', from=1-1, to=2-2]
  \arrow["\cod", from=1-3, to=2-2]
\end{tikzcd}
\]

In the remainder of this paper, we use univalent full comprehension categories as our categorical model for dependent types.
The notion of univalent full comprehension category is less restrictive than that of a CwF,
because the former does not require the types to form a set.
Every univalent comprehension category gives rise to a pseudofunctor $\Ty : \Op{\C} \rightarrow \UnivCat$
which lands in the bicategory of univalent categories.
Since the objects of a univalent category generally only form a 1-type,
the types in some context $\Gamma$ are only required to be a 1-type.
This additional flexibility is advantageous, because it allows us to define the comprehension category of sets.

\begin{exa}
\label[exa]{exa:set-comp-cat}
We have the following comprehension category.
\[
\begin{tikzcd}
  {\ArrD{\Set}} \ar[rr, "{\id}"] & & \ArrD{\Set}
  \\
  &
  {\Set}
\end{tikzcd}
\]
This comprehension category is both full and univalent.
\end{exa}

It is important to note that we do not assume that the cleaving in \cref{def:comp-cat} is split.
While in set-theoretic foundations splitness is necessary to model dependent type theory,
this is not so if we assume univalence.
There are rules in type theory saying that the substitution $\substTy{\id}{A}$ is equal to $A$ in type theory,
and that substitution commutes with composition of context morphisms.
In the context of fibrations,
these rules translate to the Cartesian lift of the identity being equal to the identity,
and the Cartesian lift of a composition being equal to the composition of the Cartesian lifts.
These two requirements only hold up to isomorphism in general,
and they only hold up to equality if the fibration is split.
However, if we assume that the involved categories are univalent,
then we always have the desired identities,
because every isomorphism gives rise to an identity.
The coherence problem remains:
since one would not force the initial model to be a set,
there could be multiple ways to prove that two types are equal.
One can solve this problem for the groupoid-syntax of type theory with a universe and $\prod$-types~\cite[Theorem 16]{altenkirch:2025}.

Finally, let us remark that there are other notions models of dependent types that also satisfy our demands,
such as \textbf{generalized category with families} (GCwFs)~\cite[Definition 10]{coraglia:2024},
also known as \textbf{judgmental dependent type theories}~\cite[Definition 3.0.1]{coraglia:2021},
and \textbf{groupoid categories with families}~\cite[Definition 19]{altenkirch:2025}.
In fact, the bicategory of GCwFs and of comprehension categories are biequivalent~\cite[Theorem 3]{coraglia:2024}.
Other alternatives are \textbf{display map categories}~\cite{taylor:1999}, \textbf{type categories}~\cite{pitts:2000}, and \textbf{D-categories}~\cite{ehrhard:1988}.
A 2-categorical comparison between various notions of categorical model for dependent type theory is given by Ahrens et al.~\cite{ahrens:2024a}.
The reason why we use comprehension categories is because the comprehension functor plays a prominent role in our proof
(see \Cref{prop:comprehension-eso}).

\subsection{Interpreting Type Theory in Comprehension Categories}
Let us finish this section by recalling how to interpret type theory with explicit coercions~\cite{curien:1993,curien:2014} in a comprehension category $\D \rightarrow \ArrD{\C}$.
Our development follows the work by Najmaei et al.~\cite{najmaei:2025},
and one can use it to show that the groupoid syntax of type theory
can be interpreted in comprehension categories~\cite{altenkirch:2025}.
We interpret objects and morphisms in $\C$ as contexts and context morphisms respectively.
The empty context is denoted by $\ECtx$ and we have unique maps $\ECtxMap[\Gamma] : \Gamma \rightarrow \ECtx$.
Types in a context $\Gamma$ are given by objects $A : \dob{\D}{\Gamma}$,
and the functor $\compfunctor$ gives us context extension.
More specifically, if we have a type $A$ in context $\Gamma$,
then from $\compfunctor(A)$ we obtain a context $\CtxExt{\Gamma}{A}$ and a morphism $\CtxPr[A] : \CtxExt{\Gamma}{A} \rightarrow \Gamma$.
Morphisms $\ff : A \rightarrow B$ in $\D$ over the identity
give rise to morphisms $\CtxExt{\Gamma}{\ff} : \CtxExt{\Gamma}{A} \rightarrow \CtxExt{\Gamma}{B}$
making the following triangle commute.
\[
\begin{tikzcd}
  {\CtxExt{\Gamma}{A}} && {\CtxExt{\Gamma}{B}} \\
  & \Gamma
  \arrow["{\CtxExt{\Gamma}{\ff}}", from=1-1, to=1-3]
  \arrow["{\CtxPr[A]}"', from=1-1, to=2-2]
  \arrow["{\CtxPr[B]}", from=1-3, to=2-2]
\end{tikzcd}
\] 

Given a context morphism $s : \Gamma \rightarrow \Delta$ and a type $A : \dob{\D}{\Delta}$,
we interpret the substitution $\substTy{s}{A} : \dob{\D}{\Gamma}$ as the domain of the following Cartesian lift.
\[
\begin{tikzcd}
  {\substTy{s}{A}} & A \\
  \Gamma & \Delta
  \arrow["\substTyMor{s}", from=1-1, to=1-2]
  \arrow["s"', from=2-1, to=2-2]
\end{tikzcd}
\]
We have isomorphisms $\iso{\substTy{\id[\Delta]}{A}}{A}$ and $\iso{\substTy{(s_1 \cdot s_2)}{A}}{\substTy{s_2}{\substTy{s_1}{A}}}$
in the fiber categories,
because taking Cartesian lifts is pseudofunctorial.
In addition,
every morphism $f : \dmor{A}{B}{\id}$ gives rise to a morphism $\substTy{s}{f} : \dmor{\substTy{s}{A}}{\substTy{s}{B}}{\id}$.
If $\D$ is univalent, then these isomorphisms give rise to actual equalities.

Terms of type $A$ in context $\Gamma$ are interpreted as sections of $\CtxPr[A] : \CtxExt{\Gamma}{A} \rightarrow \Gamma$,
i.e., a morphism $t : \Gamma \rightarrow \CtxExt{\Gamma}{A}$ such that the following composition is the identity.
\[
\begin{tikzcd}
  \Gamma & {\CtxExt{\Gamma}{A}} & \Gamma
  \arrow["t", from=1-1, to=1-2]
  \arrow["{\CtxPr[A]}", from=1-2, to=1-3]
\end{tikzcd}
\]
We write $\Tms{\Gamma}{A}$ for the collection of terms of type $A$ in context $\Gamma$.

Since $\compfunctor$ preserves Cartesian morphisms, the following square is a pullback.
\[
\begin{tikzcd}
  {\CtxExt{\Gamma}{\substTy{s}{A}}} & {\CtxExt{\Delta}{A}} \\
  \Gamma & \Delta
  \arrow["\compfunctor(\substTyMor{s})", from=1-1, to=1-2]
  \arrow["{\CtxPr}"', from=1-1, to=2-1]
  \arrow["{\CtxPr}", from=1-2, to=2-2]
  \arrow["s"', from=2-1, to=2-2]
  \arrow["\lrcorner"{anchor=center, pos=0.125}, draw=none, from=1-1, to=2-2]
\end{tikzcd}
\]
We use this to interpret the variable rule.
More specifically, if we have a type $A$ in context $\Gamma$,
we have a term $\var$ in context $\CtxExt{\Gamma}{A}$ of type $\substTy{\CtxPr}{A}$.
\[
\begin{tikzcd}
  {\CtxExt{\Gamma}{A}} \\
  & {\CtxExt{\CtxExt{\Gamma}{A}}{\substTy{\CtxPr}{A}}} & {\CtxExt{\Gamma}{A}} \\
  & {\CtxExt{\Gamma}{A}} & \Gamma
  \arrow["\var"{description}, dashed, from=1-1, to=2-2]
  \arrow["\id", bend left=20, from=1-1, to=2-3]
  \arrow["\id"', bend right=20, from=1-1, to=3-2]
  \arrow["\compfunctor(\substTyMor{\CtxPr})", from=2-2, to=2-3]
  \arrow["{\CtxPr}"', from=2-2, to=3-2]
  \arrow["{\CtxPr}", from=2-3, to=3-3]
  \arrow["{\CtxPr}"', from=3-2, to=3-3]
  \arrow["\lrcorner"{anchor=center, pos=0.125}, draw=none, from=2-2, to=3-3]
\end{tikzcd}
\]
To interpret substitution of terms,
we also use that $\compfunctor$ preserves Cartesian morphisms.
If we have a context morphism $s : \Gamma \rightarrow \Delta$
and a term $t : \Tms{\Gamma}{A}$,
we get another term $\substTm{s}{t} : \Tms{\Delta}{\substTy{s}{A}}$.
\[
\begin{tikzcd}
  \Gamma & \Delta \\
  & {\CtxExt{\Gamma}{\substTy{s}{A}}} & {\CtxExt{\Delta}{A}} \\
  & \Gamma & \Delta
  \arrow["{s}", from=1-1, to=1-2]
  \arrow["{\substTm{s}{t}}"{description}, dashed, from=1-1, to=2-2]
  \arrow["\id"', bend right=30, from=1-1, to=3-2]
  \arrow["t", bend left=10, from=1-2, to=2-3]
  \arrow["\compfunctor(\substTyMor{\CtxPr})", from=2-2, to=2-3]
  \arrow["{\CtxPr}"', from=2-2, to=3-2]
  \arrow["{\CtxPr}", from=2-3, to=3-3]
  \arrow["s"', from=3-2, to=3-3]
  \arrow["\lrcorner"{anchor=center, pos=0.125}, draw=none, from=2-2, to=3-3]
\end{tikzcd}
\]

On terms,
we also have a coercion operation.
Specifically,
if we have types $A, B : \dob{\D}{\Gamma}$
and a morphism $f : \dmor{A}{B}{\id[\Gamma]}$,
then every term $t : \Tms{\Gamma}{A}$ gives rise to a term $\coeTm{t}{f} : \Tms{\Gamma}{B}$
define by the following composition.
\[
\begin{tikzcd}
  \Gamma & {\CtxExt{\Gamma}{A}} & {\CtxExt{\Gamma}{B}}
  \arrow["t", from=1-1, to=1-2]
  \arrow["{\compfunctor(f)}", from=1-2, to=1-3]
\end{tikzcd}
\]
Coercion on terms is functorial
in the sense that we have $\coeTm{t}{\id[A]} = t$
and $\coeTm{t}{(f \cdot g)} = \coeTm{(\coeTm{t}{f})}{g}$.

Finally, we interpret extension of context morphisms.
If we have a context morphism $s : \Gamma \rightarrow \Delta$
and a term $t : \Tms{\Gamma}{\substTy{s}{A}}$,
then we have a context morphism $\PairSub{s}{t} : \Gamma \rightarrow \CtxExt{\Delta}{A}$,
which is defined to be $t \cdot \compfunctor(s)$.
\[
\begin{tikzcd}
  {\CtxExt{\Gamma}{\substTy{s}{A}}} & {\CtxExt{\Delta}{A}} \\
  \Gamma & \Delta
  \arrow["{\compfunctor(\substTyMor{s})}", from=1-1, to=1-2]
  \arrow["\CtxPr"', bend right=20, from=1-1, to=2-1]
  \arrow["\CtxPr", from=1-2, to=2-2]
  \arrow["t"', bend right=20, from=2-1, to=1-1]
  \arrow["s"', from=2-1, to=2-2]
\end{tikzcd}
\]

\section{The Bicategory of Univalent Full Comprehension Categories}
\label{sec:bicat-comp-cat}
Now we start our study of internal language theorems for univalent categories.
Following Clairambault and Dybjer~\cite{clairambault:2014} we express our internal language theorems as biequivalences
between bicategories of models and of type theories.
Since we use univalent full comprehension categories as our models,
the bicategory $\FullCompCat$ of univalent full comprehension categories acts as the bicategory of models throughout this paper.
In this section, we take a closer look at $\FullCompCat$, and we have two goals:
define this bicategory and prove that it is univalent.

Before we do so, let us recall the bicategory of comprehension categories defined by Coraglia and Emmenegger~\cite[Definition 7]{coraglia:2024}.
The 1-cells
from a comprehension category $\compfunctor_1 : \D_1 \rightarrow \ArrD{\C_1}$ to $\compfunctor_2 : \D_2 \rightarrow \ArrD{\C_2}$
are given by \textbf{lax morphisms},
which consist of a functor $\F : \C_1 \rightarrow \C_2$ that preserves terminal objects,
a Cartesian displayed functor $\FF : \D_1 \rightarrow \D_2$ over $\F$,
and a displayed natural transformation $\F_\chi$ over the identity as follows.
\[
\begin{tikzcd}
  {\D_1} & {\ArrD{\C_1}} \\
  {\D_2} & {\ArrD{\C_2}}
  \arrow[""{name=0, anchor=center, inner sep=0}, "{\compfunctor_1}", from=1-1, to=1-2]
  \arrow["\FF"', from=1-1, to=2-1]
  \arrow["{\ArrD{\F}}", from=1-2, to=2-2]
  \arrow[""{name=1, anchor=center, inner sep=0}, "{\compfunctor_2}"', from=2-1, to=2-2]
  \arrow["\F_\chi", shorten <=4pt, shorten >=4pt, Rightarrow, from=0, to=1]
\end{tikzcd}
\]
Lax morphisms preserve substitution up to isomorphism
and context extension up to a morphism.
A 2-cell from $(\F, \FF, \F_\chi)$ to $(\G, \GG, \G_\chi)$
is given by
a natural transformation $\nt : \F \Rightarrow \G$,
a displayed natural transformation $\tautau : \FF \Rightarrow \GG$ over $\nt$
such that the compositions below are equal.
\[\begin{tikzcd}[column sep = 50pt]
    {\D_1} & {\ArrD{\C_1}} \\
    {\D_2} & {\ArrD{\C_2}}
    \arrow["{\compfunctor_1}", from=1-1, to=1-2]
    \arrow[""{name=0, anchor=center, inner sep=0}, "\FF"', bend right=50, from=1-1, to=2-1]
    \arrow[""{name=1, anchor=center, inner sep=0}, "\GG"{description}, bend left=50, from=1-1, to=2-1]
    \arrow[""{name=2, anchor=center, inner sep=0}, "{\ArrD{\G}}", from=1-2, to=2-2]
    \arrow["{\compfunctor_2}"', from=2-1, to=2-2]
    \arrow["\tautau", shorten <=2pt, shorten >=2pt, Rightarrow, from=0, to=1, shorten >=1ex]
    \arrow["{\G_\chi}", shorten <=7pt, shorten >=7pt, Rightarrow, from=1, to=2]
\end{tikzcd}
\quad \quad \quad
\begin{tikzcd}[column sep = 50pt]
  {\D_1} & {\ArrD{\C_1}} \\
  {\D_2} & {\ArrD{\C_2}}
  \arrow["{\compfunctor_1}", from=1-1, to=1-2]
  \arrow[""{name=0, anchor=center, inner sep=0}, "\FF"', from=1-1, to=2-1]
  \arrow[""{name=1, anchor=center, inner sep=0}, "{\ArrD{\G}}", bend left=50, from=1-2, to=2-2]
  \arrow[""{name=2, anchor=center, inner sep=0}, "{\ArrD{\F}}"{description}, bend right=50, from=1-2, to=2-2]
  \arrow["{\compfunctor_2}"', from=2-1, to=2-2]
  \arrow["{\F_{\chi}}", shorten <=7pt, shorten >=7pt, Rightarrow, from=0, to=2]
  \arrow["{\ArrD{\tau}}", shorten <=2pt, shorten >=2pt, Rightarrow, from=2, to=1, shorten <=1.5ex]
\end{tikzcd}
\]

The bicategory $\FullCompCat$ that we use, is slightly different from the bicategory defined by Coraglia and Emmenegger~\cite{coraglia:2024}.
Instead of all comprehension categories, $\FullCompCat$ only has univalent full comprehension categories as objects.
Following Clairambault and Dybjer~\cite{clairambault:2014}, we also restrict the morphisms: instead of all lax morphisms, $\FullCompCat$ only contains pseudo morphisms.
Here we say that a lax morphism $(\F, \FF, \F_\chi)$ is a \textbf{pseudo morphism} if $\F_\chi$ is a natural isomorphism~\cite{curien:2014}.
Such morphisms thus preserve context extension up to isomorphism.
We do not pose any restriction on the 2-cells.

To construct $\FullCompCat$, we use \textbf{displayed bicategories}~\cite[Definition 6.1]{ahrens:2021},
which allow us to construct this bicategory and to prove its univalence modularly.
These techniques have also been used to construct the bicategories of
monoidal categories~\cite{wullaert:2022},
double categories~\cite{vanderweide:2024},
and monads~\cite{vanderweide:2023a}.

We start this section by recalling some basic notions and theorems about displayed bicategories.
After that we use these techniques to define the bicategory $\FullCompCat$ of univalent full comprehension categories.
We finish by proving that $\FullCompCat$ is univalent.

\subsection{Displayed Bicategories}
\textbf{Displayed bicategories} generalize displayed categories to the 2-dimensional setting.
Intuitively, a displayed bicategory $\D$ over a bicategory $\B$
represents structure and properties to be added to the objects, 1-cells, and 2-cells of $\B$.
Every displayed bicategory $\D$ gives rise to a \textbf{total bicategory} $\total{\D}$
and a \textbf{forgetful pseudofunctor} $\proj : \total{\D} \rightarrow \B$ that forgets the added structure and properties.
More precisely, we define displayed bicategories as follows.

\begin{defiC}[{\cite[Definition 6.1]{ahrens:2021}}]
\label[defi]{def:disp-bicat}
A \conceptDef{displayed bicategory}{Bicategories.DisplayedBicats.DispBicat}{disp_bicat} $\D$ over a bicategory $\B$ is given by
\begin{itemize}
  \item a type $\dob{\D}{x}$ of \textbf{objects over $x$} for all objects $x : \B$;
  \item a type $\dmor{\xx}{\yy}{f}$ of \textbf{morphisms over $f$} for all 1-cells $f : x \rightarrow y$ in $\B$ and objects $\xx$ and $\yy$ over $x$ and $y$ respectively;
  \item a set $\dtwo{\ff}{\gg}{\tau}$ of \textbf{2-cells over $\tau$} for all 2-cells $\tau : f \Rightarrow g$ and morphisms $\ff : \dmor{\xx}{\yy}{f}$ and $\gg : \dmor{\xx}{\yy}{g}$ over $f$ and $g$ respectively.
\end{itemize}
We write $\idd[\xx]$ and $\idd[\ff]$ for the identity 1-cells and 2-cells respectively,
and we write $\ff \cdot \gg$ and $\tautau \cdot \thetatheta$ for their vertical compositions.
A complete list of operations and laws is found in the literature~\cite[Definition 6.1]{ahrens:2021}.

Every displayed bicategory $\D$ gives rise to a \conceptDef{total bicategory}{Bicategories.DisplayedBicats.DispBicat}{total_bicat} $\total{\D}$,
whose objects are pairs $x : \D$ and $\xx : \dob{\D}{x}$
and a \conceptDef{forgetful pseudofunctor}{Bicategories.PseudoFunctors.Examples.Projection}{pr1_psfunctor} $\proj{\D} : \total{\D} \rightarrow \B$.
\end{defiC}

To understand this notion, let us consider an example,
namely the displayed bicategory of monoidal structures over the bicategory of univalent categories~\cite{wullaert:2022}.
Here the objects over a univalent category $\C$ are monoidal structures for $\C$,
the 1-cells over a functor $\F : \C_1 \rightarrow \C_2$ are structures that witness that $\F$ is a lax monoidal functor,
and the 2-cells over a natural transformations $\nt$ are proofs that $\nt$ is a monoidal transformation.
The total bicategory of this displayed bicategory is the bicategory of univalent monoidal categories,
and the forgetful pseudofunctor sends every monoidal category to its underlying category.

Next we look at how to construct adjoint equivalences and invertible 2-cells in the total bicategory.
Here the main idea is that an adjoint equivalence in $\total{\D}$ can be obtained from an adjoint equivalence in $\B$
together with some additional structure in $\D$
and similarly for invertible 2-cells.
The necessary extra structure is given by \textbf{displayed adjoint equivalences} and \textbf{displayed invertible 2-cells} respectively.
These notions are defined similar to their non-displayed versions,
except that we use displayed objects, 1-cells, and 2-cells.

\begin{defiC}[{\cite[Definitions 7.1 and 7.2]{ahrens:2021}}]
\label[defi]{def:disp-adjequiv}
Let $\D$ be a displayed bicategory over $\B$.
Given 1-cells $f, g : x \rightarrow y$ and an invertible 2-cell $\tau : f \Rightarrow g$,
a \conceptDef{displayed invertible 2-cell}{Bicategories.DisplayedBicats.DispBicat}{is_disp_invertible_2cell} over $\tau$ from $\ff : \dmor{\xx}{\yy}{f}$ to $\gg : \dmor{\xx}{\yy}{g}$
consists of displayed 2-cells $\tautau : \dtwo{\ff}{\gg}{\tau}$ and $\overline{\tau^{-1}} : \dtwo{\gg}{\ff}{\tau^{-1}}$
that compose to the identity.
We denote the type of invertible 2-cells over $\tau$ by $\dinvcell{\ff}{\gg}{\tau}$.

Let $\adjunction{\l}{\r}{\eta}{\varepsilon}$ be an adjoint equivalence in $\B$ where $\l : x \rightarrow y$.
A \conceptDef{displayed adjoint equivalence}{Bicategories.DisplayedBicats.DispAdjunctions}{disp_left_adjoint_equivalence} over $\adjunction{\l}{\r}{\eta}{\varepsilon}$
from $\xx : \dob{\D}{x}$ to $\yy : \dob{\D}{y}$
consists of displayed 1-cells $\ll : \dmor{\xx}{\yy}{\l}$ and $\rr : \dmor{\yy}{\xx}{\r}$
and displayed invertible 2-cells $\etaeta : \dtwo{\id}{\ll \cdot \rr}{\eta}$ and $\epsilonepsilon : \dtwo{\rr \cdot \ll}{\id}{\varepsilon}$
satyisfying the usual triangle laws.
The type of adjoint equivalences over $l$ is denoted by $\dadjequiv{\xx}{\yy}{\l}$.
\end{defiC}

Every displayed invertible 2-cell $\tautau$ over $\tau$ gives rise to an invertible 2-cell $(\tau , \tautau)$ in $\total{\D}$.
In addition, if we have a displayed adjoint equivalence $\ll$ over an adjoint equivalence $\adjunction{\l}{\r}{\eta}{\varepsilon}$,
then we get an adjoint equivalence in $\total{\D}$ whose left adjoint is given by $(\l, \ll)$.

Displayed bicategories also allow us to modularly prove that a bicategory is univalent.
Instead of proving directly that the total bicategory $\total{\D}$ is univalent,
it is often simpler to consider $\B$ and $\D$ separately
and to prove that both $\B$ and $\D$ are univalent.
For this purpose, we define \textbf{univalent displayed bicategories}.

\begin{defiC}[{\cite[Definition 7.3]{ahrens:2021}}]
\label[defi]{def:disp-univalent}
Let $\D$ be a displayed bicategory over $\B$.
\begin{itemize}
  \item We say that $\D$ is \conceptDef{locally univalent}{Bicategories.DisplayedBicats.DispUnivalence}{disp_univalent_2_0}
    if for all objects $x, y : \B$,
    1-cells $f, g : x \rightarrow y$ with $p : f = g$,
    displayed objects $\xx : \dob{\D}{x}$ and $\yy : \dob{\D}{y}$,
    and displayed 1-cells $\ff : \dmor{\xx}{\yy}{f}$ and $\gg : \dmor{\xx}{\yy}{g}$
    the map $\idtoinvcelldisp{\ff}{\gg} : \ff = \gg \rightarrow \dinvcell{\ff}{\gg}{\idtoinvcell{f}{g}(p)}$
    defined by path induction is an equivalence of types.
  \item We say that $\D$ is \conceptDef{globally univalent}{Bicategories.DisplayedBicats.DispUnivalence}{disp_univalent_2_1}
    if for all objects $x, y : \B$ with $p : x = y$
    and displayed objects $\xx : \dob{\D}{x}$ and $\yy : \dob{\D}{y}$ over $x$ and $y$ respectively,
    the map $\idtoadjequiv{\xx}{\yy} : \xx = \yy \rightarrow \dadjequiv{\xx}{\yy}{\idtoadjequiv{x}{y}(f)}$
    defined by path induction is an equivalence of types.
  \item Finally, we say that $\D$ is \conceptDef{univalent}{Bicategories.DisplayedBicats.DispUnivalence}{disp_univalent_2} if it is both locally and globally univalent.
\end{itemize}
\end{defiC}

Note that in \cref{def:disp-univalent} it suffiecs to restrict to identities
rather than arbitrary adjoint equivalences and invertible 2-cells in the base bicategory $\B$.
The resulting definition is equivalent to the one by Ahrens et al.~\cite[Definition 7.3]{ahrens:2021}.
The key theorem about displayed univalence is as follows~\cite[Theorem 7.4]{ahrens:2021}.

\begin{propL}[\coqdocurl{Bicategories.DisplayedBicats.DispUnivalence}{total_is_univalent_2}]
\label{prop:total-univalent}
Let $\D$ be a univalent displayed bicategory over a univalent bicategory $\B$.
Then the total bicategory $\total{\D}$ is univalent.
\end{propL}

Throughout this paper,
we only use displayed bicategories that satisfy some additional properties.
The first property, which we call \conceptDef{locally preordered}{Bicategories.DisplayedBicats.DispBicat}{disp_2cells_isaprop},
says that each type $\dtwo{\ff}{\gg}{\tau}$ is a proposition.
The second property, which we call \conceptDef{locally groupoidal}{Bicategories.DisplayedBicats.DispBicat}{disp_locally_groupoid},
says that each displayed 2-cell $\dtwo{\ff}{\gg}{\tau}$ over an invertible 2-cell is invertible as well.
Finally, we say that a displayed bicategory \conceptDef{has trivial 2-cells}{Bicategories.DisplayedBicats.DispBicat}{disp_2cells_iscontr} if the type $\dtwo{\ff}{\gg}{\tau}$
is contractible for all displayed 1-cells $\ff : \dmor{\xx}{\yy}{f}$ and $\gg : \dmor{\xx}{\yy}{g}$, and 2-cells $\tau : f \Rightarrow g$.
Note that every displayed bicategory with trivial 2-cells is both locally preordered and groupoidal.

We end this recap with several constructions on displayed bicategories.

\begin{exa}
\label[exa]{exa:disp-bicat}
Let $\B$ be a bicategory and suppose that we have a type family $\Pred$ on the objects of $\B$.
We also assume that we have a type family $\Qred$ on triples of objects $x, y : \B$
such that $\Pred(x)$ and $\Pred(y)$ hold,
and 1-cells $f : x \rightarrow y$.
Suppose that $\Qred(\id[x])$ holds for all $x : \B$
and that $\Qred(f \cdot g)$ holds whenever $\Qred(f)$ and $\Qred(g)$ hold.
We define \conceptDef{$\subbicatD{\Pred}{\Qred}$}{Bicategories.DisplayedBicats.Examples.Sub1Cell}{disp_subbicat} as the displayed bicategory whose
\begin{itemize}
  \item objects over $x$ are proofs of $\Pred(x)$;
  \item 1-cells over $f : x \rightarrow y$ are proofs of $\Qred(f)$;
  \item type of 2-cells over $\tau : f \Rightarrow g$ is the unit type.
\end{itemize}
This displayed bicategory is univalent whenever both $\Pred$ and $\Qred$ are valued in propositions.
We write $\subbicat{\Pred}{\Qred}$ for $\total{\subbicatD{\Pred}{\Qred}}$.

As a special case of the previous example we have the full subbicategory.
Given a type family $\Pred$ on the objects of $\B$,
\conceptDef{$\fullsubbicatD{\Pred}$}{Bicategories.DisplayedBicats.Examples.FullSub}{disp_fullsubbicat} is the displayed bicategory whose
\begin{itemize}
  \item objects over $x$ are proofs of $\Pred(x)$;
  \item type of 1-cells over $f : x \rightarrow y$ is the unit type;
  \item type of 2-cells over $\tau : f \Rightarrow g$ is the unit type.
\end{itemize}
This displayed bicategory is univalent if the type family $\Pred$ is valued in propositions.
We denote $\total{\fullsubbicatD{\Pred}}$ by $\fullsubbicat{\Pred}$.

Suppose that we have displayed bicategories $\D_1$ and $\D_2$ over $\B$.
We define their \conceptDef{product}{Bicategories.DisplayedBicats.Examples.Prod}{disp_dirprod_bicat} $\D_1 \times \D_2$ as the displayed bicategory whose
\begin{itemize}
  \item objects over $x : \B$ are pairs of $\xx : \dob{\D_1}{x}$ and $\yy : \dob{\D_2}{x}$;
  \item 1-cells over $f : x \rightarrow y$ from $(\xx_1 , \yy_1)$ to $(\xx_2 , \yy_2)$
    are pairs $\ff : \dmor{\xx_1}{\xx_2}{f}$ to $\gg : \dmor{\yy_1}{\yy_2}{f}$;
  \item 2-cells over $\tau : f \Rightarrow g$ from $(\ffone , \ggone)$ to $(\fftwo , \ggtwo)$
    are pairs $\tautau : \dtwo{\ffone}{\fftwo}{\tau}$ and $\thetatheta : \dtwo{\ggone}{\ggtwo}{\tau}$.
\end{itemize}
If both $\D$ and $\D'$ are univalent, then so is $\D_1 \times \D_2$.

Let $\D_1$ and $\D_2$ be displayed bicategories over $\B$ and $\total{\D_1}$ respectively.
We define the \conceptDef{sigma displayed bicategory}{Bicategories.DisplayedBicats.Examples.Sigma}{sigma_bicat} $\sigmabicat{\D_1}{\D_2}$ as the displayed bicategory over $\B$ whose
\begin{itemize}
  \item objects over $x : \B$ are pairs of $\xx : \dob{\D_1}{x}$ and $\yy : \dobP{\D_2}{(x , \xx)}$;
  \item 1-cells over $f : x \rightarrow y$ from $(\xx_1 , \yy_1)$ to $(\xx_2 , \yy_2)$
    are pairs $\ff : \dmor{\xx_1}{\xx_2}{f}$ to $\gg : \dmor{\yy_1}{\yy_2}{(f, \ff)}$;
  \item 2-cells over $\tau : f \Rightarrow g$ from $(\ffone , \ggone)$ to $(\fftwo , \ggtwo)$
    are pairs $\tautau : \dtwo{\ffone}{\fftwo}{\tau}$ and $\thetatheta : \dtwo{\ggone}{\ggtwo}{(\tau, \tautau)}$.
\end{itemize}
If we have univalent displayed bicategories $\D_1$ and $\D_2$ that are both locally preordered and groupoidal,
then $\sigmabicat{\D_1}{\D_2}$ is univalent as well.
\end{exa}

\subsection{The Construction}
Now we construct the bicategory $\FullCompCat$ using displayed bicategories.
The idea is to build up this bicategory step by step.
Starting with univalent categories,
and we layerwise add structures and properties to obtain full univalent comprehension categories.
Each displayed bicategory represents one piece of the necessary data,
and we use different layers to indicate the dependecies between the displayed bicategories.

More specifically, we start with the bicategory $\UnivCat$ of univalent categories.
On top of this bicategory, we define two displayed bicategories:
$\UnivCatT$, which adds a terminal object,
and $\UnivDispCat$, which adds a univalent displayed category (\cref{def:disp-bicat-terminal}).
We take their product, which we denote by $\UnivDispCatT$,
and we continue by defining the displayed bicategory $\CleavingT$ over $\total{\UnivDispCatT}$ (\cref{def:disp-bicat-cleaving}).
Here we select out those displayed categories that are equipped with a cleaving.

Objects of $\total{\CleavingT}$ consists of a univalent category $\C$ with a terminal object
and a univalent displayed category $\D$ over $\C$ with a cleaving.
On top of $\total{\CleavingT}$ we define yet another displayed category, namely $\CompCat$ (\cref{def:disp-bicat-comp-cat}).
This bicategory corresponds to the one defined by Coraglia and Emmenegger~\cite[Definition 7]{coraglia:2024}
except for the fact that we require the involved categories to be univalent.
We finish the construction by taking a subbicategory to obtain $\FullCompCat$ (\cref{def:disp-bicat-full-comp-cat}).
A pictorial overview of the construction is found in \Cref{fig:construction}.

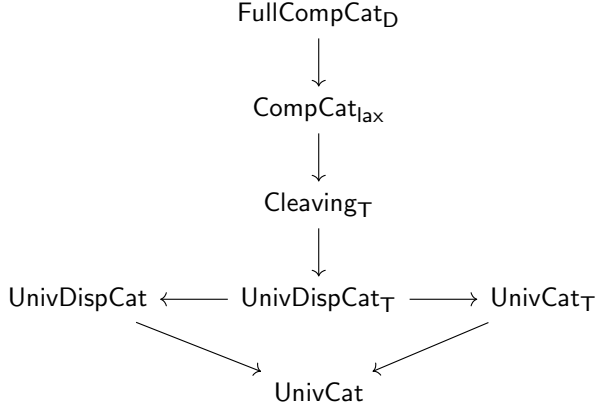
\begin{figure}
\[
\begin{tikzcd}
  & \DFullCompCat \\
  & \CompCat \\
  & \CleavingT \\
  \UnivDispCat & \UnivDispCatT & \UnivCatT \\
  & \UnivCat
  \arrow[from=1-2, to=2-2]
  \arrow[from=2-2, to=3-2]
  \arrow[from=3-2, to=4-2]
  \arrow[from=4-1, to=5-2]
  \arrow[from=4-2, to=4-1]
  \arrow[from=4-2, to=4-3]
  \arrow[from=4-3, to=5-2]
\end{tikzcd}
\]
\caption{Construction of $\FullCompCat$}
\label{fig:construction}
\end{figure}

We start by defining $\UnivCatT$ and $\UnivDispCatT$.

\begin{defi}
\label[defi]{def:disp-bicat-terminal}
We define the displayed bicategory \conceptDef{$\UnivCatT$}{Bicategories.Core.Examples.StructuredCategories}{disp_bicat_terminal_obj} over $\UnivCat$ as $\subbicatD{\Pred}{\Qred}$
where $\Pred$ expresses that a univalent category has a terminal object
and where $\Qred$ expresses that a functor preserves terminal objects.

We also define the displayed bicategory \conceptDef{$\UnivDispCat$}{Bicategories.DisplayedBicats.Examples.DispBicatOfDispCats}{disp_bicat_of_univ_disp_cats} over $\UnivCat$ as follows.
\begin{itemize}
  \item The objects over univalent categories $\C$ are univalent displayed categories $\D$ over $\C$.
  \item The 1-cells over functors $\F : \C_1 \rightarrow \C_2$
    from a displayed category $\D_1$ over $\C_1$
    to a displayed category $\D_2$ over $\C_2$
    are displayed functors $\FF$ from $\D_1$ to $\D_2$ over $\F$.
  \item The 2-cells over a natural transformations $\nt : \F \Rightarrow \G$
    from a displayed functor $\FF : \D_1 \rightarrow \D_2$ over $\F$
    to a displayed functor $\GG : \D_1 \rightarrow \D_2$ over $\G$
    are displayed natural transformations from $\FF$ to $\GG$ over $\nt$.
\end{itemize}
We define \conceptDef{$\UnivDispCatT$}{Bicategories.ComprehensionCat.BicatOfCompCat.DispCatTerminal}{disp_bicat_cat_with_terminal_disp_cat} to be $\UnivCatT \times \UnivDispCat$.
\end{defi}

The objects of the total bicategory $\total{\UnivDispCatT}$ consist of a univalent category $\C$ with a terminal object $\ECtx$
and a displayed univalent category $\D$ over $\C$.
Morphisms from $(\C_1, \D_1)$ to $(\C_2, \D_2)$ consist of a functor $\F : \C_1 \rightarrow \C_2$ that preserves terminal objects
and a displayed functor $\FF : \D_1 \rightarrow \D_2$ over $\F$.
For each such morphism we have an isomorphism $\FTerm{\F} : \iso{\ECtx_1}{\ECtx_2}$
where $\ECtx_1$ and $\ECtx_2$ are the terminal objects of $\C_1$ and $\C_2$ respectively.
Finally, a 2-cell from $(\F, \FF)$ to $(\G, \GG)$ consists of a natural transformation $\nt : \F \Rightarrow \G$
and a displayed natural transformation $\ntnt : \FF \Rightarrow \GG$ over $\nt$.

Next we add a cleaving to the structure.

\begin{defi}
\label[defi]{def:disp-bicat-cleaving}
We define the displayed bicategory \conceptDef{$\CleavingT$}{Bicategories.ComprehensionCat.BicatOfCompCat.FibTerminal}{disp_bicat_cat_with_terminal_cleaving} over $\total{\UnivDispCatT}$ as $\subbicatD{\Pred}{\Qred}$
where $\Pred$ expresses that a displayed category is equipped with a cleaving
and where $\Qred$ expresses that a displayed functor is Cartesian.
\end{defi}

The objects $\total{\CleavingT}$ consist of a univalent category $\C$ with a terminal object
and a displayed univalent category $\D$ over $\C$ together with a cleaving.
Morphisms from $(\C_1, \D_1)$ to $(\C_2, \D_2)$ consist of a functor $\F : \C_1 \rightarrow \C_2$ that preserves terminal objects
and a Cartesian displayed functor $\FF : \D_1 \rightarrow \D_2$ over $\F$.
The 2-cells in $\total{\CleavingT}$ are the same as 2-cells in $\total{\UnivDispCatT}$.

Now we define the bicategory of comprehension categories.

\begin{defi}
\label[defi]{def:disp-bicat-comp-cat}
We define the displayed bicategory \conceptDef{$\CompCat$}{Bicategories.ComprehensionCat.BicatOfCompCat.CompCat}{disp_bicat_comp_cat} over $\total{\CleavingT}$ as follows.
\begin{itemize}
  \item The objects over $(\C, \D)$ are Cartesian functors $\compfunctor : \D \rightarrow \ArrD{\C}$ over the identity.
  \item The 1-cells over $(\F, \FF)$ are displayed natural transformations as follows.
    \[
      \begin{tikzcd}
        {\D_1} & {\ArrD{\C_1}} \\
        {\D_2} & {\ArrD{\C_2}}
        \arrow[""{name=0, anchor=center, inner sep=0}, "{\compfunctor_1}", from=1-1, to=1-2]
        \arrow["\FF"', from=1-1, to=2-1]
        \arrow["{\ArrD{\F}}", from=1-2, to=2-2]
        \arrow[""{name=1, anchor=center, inner sep=0}, "{\compfunctor_2}"', from=2-1, to=2-2]
        \arrow["\F_\chi", shorten <=4pt, shorten >=4pt, Rightarrow, from=0, to=1]
      \end{tikzcd}
    \]
  \item The 2-cells over $(\nt, \ntnt)$ are proofs that the following compositions are equal.
    \[\begin{tikzcd}[column sep = 50pt]
        {\D_1} & {\ArrD{\C_1}} \\
        {\D_2} & {\ArrD{\C_2}}
        \arrow["{\compfunctor_1}", from=1-1, to=1-2]
        \arrow[""{name=0, anchor=center, inner sep=0}, "\FF"', bend right=50, from=1-1, to=2-1]
        \arrow[""{name=1, anchor=center, inner sep=0}, "\GG"{description}, bend left=50, from=1-1, to=2-1]
        \arrow[""{name=2, anchor=center, inner sep=0}, "{\ArrD{\G}}", from=1-2, to=2-2]
        \arrow["{\compfunctor_2}"', from=2-1, to=2-2]
        \arrow["\tautau", shorten <=2pt, shorten >=2pt, Rightarrow, from=0, to=1, shorten >=1ex]
        \arrow["{G_\chi}", shorten <=7pt, shorten >=7pt, Rightarrow, from=1, to=2]
      \end{tikzcd}
      \quad \quad = \quad \quad
      \begin{tikzcd}[column sep = 50pt]
        {\D_1} & {\ArrD{\C_1}} \\
        {\D_2} & {\ArrD{\C_2}}
        \arrow["{\compfunctor_1}", from=1-1, to=1-2]
        \arrow[""{name=0, anchor=center, inner sep=0}, "\FF"', from=1-1, to=2-1]
        \arrow[""{name=1, anchor=center, inner sep=0}, "{\ArrD{\G}}", bend left=50, from=1-2, to=2-2]
        \arrow[""{name=2, anchor=center, inner sep=0}, "{\ArrD{\F}}"{description}, bend right=50, from=1-2, to=2-2]
        \arrow["{\compfunctor_2}"', from=2-1, to=2-2]
        \arrow["{F_{\chi}}", shorten <=7pt, shorten >=7pt, Rightarrow, from=0, to=2]
        \arrow["{\ArrD{\tau}}", shorten <=2pt, shorten >=2pt, Rightarrow, from=2, to=1, shorten <=1.5ex]
      \end{tikzcd}
    \]
\end{itemize}
\end{defi}

Note that $\total{\CompCat}$ corresponds to the bicategory defined by Coraglia and Emmenegger~\cite[Definition 7]{coraglia:2024}
with the only difference being that we require the involved categories to be univalent.
We finish the construction by taking a subbicategory.

\begin{defi}
\label[defi]{def:disp-bicat-full-comp-cat}
We define the displayed bicategory \conceptDef{$\DFullCompCat$}{Bicategories.ComprehensionCat.BicatOfCompCat.FullCompCat}{disp_bicat_full_comp_cat} over $\total{\CompCat}$ as $\subbicatD{\Pred}{\Qred}$
where $\Pred$ expresses that a comprehension category is full
and where $\Qred$ expresses that a lax morphism is a pseudo morphism.
We write \conceptDef{$\FullCompCat$}{Bicategories.ComprehensionCat.BicatOfCompCat.FullCompCat}{bicat_full_comp_cat} for $\total{\DFullCompCat}$.
\end{defi}

Now we have the desired bicategory $\FullCompCat$,
whose objects are full univalent comprehension categories
and whose morphisms are pseudo morphisms.
We finish this section by verifying that this bicategory is univalent.

\begin{propL}[\coqdocurl{Bicategories.ComprehensionCat.BicatOfCompCat.FullCompCat}{is_univalent_2_bicat_full_comp_cat}]
\label[propL]{prop:univ-bicat-full-comp-cat}
The bicategory $\FullCompCat$ is univalent.
\end{propL}

\begin{proof}
We repeatedly use \cref{prop:total-univalent},
so we need to verify that $\UnivCat$ and each of the displayed bicategories defined in this section are univalent.
The bicategory $\UnivCat$ is known to be univalent~\cite[Proposition 3.19]{ahrens:2021}.
To prove that $\UnivDispCat$ is univalent, we use the same steps as the proof for $\UnivCat$.
The displayed bicategory $\UnivCatT$ is univalent, since it is defined as a subbicategory.
Here we use that the type of terminal objects in a univalent category is a proposition (\Cref{prop:limits-prop}).
Since both $\UnivDispCat$ and $\UnivCatT$ are univalent, $\UnivDispCatT$ is univalent as well.
Next we note that $\CleavingT$ also is univalent, because it is defined as a subbicategory
and because the type of cleavings for a univalent displayed category is a proposition.
For a proof that the displayed bicategory $\CompCat$ is univalent, we refer the reader to the formalization.
Finally, $\DFullCompCat$ is defined as a subbicategory as well, and thus it is univalent.
All in all, we conclude that $\FullCompCat$ is univalent.
\end{proof}

\section{Basic Type Formers for Comprehension Categories}
\label{sec:type-formers}
Up to now we have constructed the univalent bicategory $\FullCompCat$
whose objects are full univalent comprehension categories.
However, such comprehension categories only model the basic judgments of Martin-L\"of type theory,
and they do not interpret any type former.
For internal language theorems, we also need to include the relevant types in our models.

Let us focus on the internal language of univalent categories with finite limits for now.
Recall that Clairambault and Dybjer~\cite[Theorem 6.1]{clairambault:2014} constructed
a biequivalence between the bicategory of categories with finite limits and democratic CwFs
that support extensional identity types and $\sum$-types.
For comprehension categories,
we consider democracy and similar type formers,
namely unit types, binary product types, equalizer types, and $\sum$-types.
We call comprehension categories that support all these types formers \textbf{univalent full democratic finite limit comprehension categories},
or, for short, \textbf{DFL comprehension categories}.

Note that our choice of type formers differs from Clairambault and Dybjer.
While we explicitly require unit types and binary product types in the structure,
they derive these types using democracy and $\sum$-types respectively.
In addition, we use equalizer types instead of extensional identity types,
which are equivalent~\cite[Theorem 10.5.10]{jacobs:2001}.
The reason for these differences is that they slightly simplify the proofs in \Cref{sec:fin-lim}.

The main goal in this section is to construct the univalent bicategory $\DFLCompCat$ of DFL comprehension categories,
and for that we use displayed bicategories.
More specifically, we define a displayed bicategory over $\FullCompCat$ for each of the necessary type formers.
An overview of the construction can be found in \Cref{fig:construction-dfl-compcat}.
We finish this section with a sufficient condition for when a 1-cell in this bicategory is an adjoint equivalence.

\begin{figure}
\[
\begin{tikzcd}
  && \DFLCompCatD \\
  \DUnit & \DProdType & \DEqualizer & \SigmaType & \Democracy \\
  && \FullCompCat
  \arrow[from=1-3, to=2-1]
  \arrow[from=1-3, to=2-2]
  \arrow[from=1-3, to=2-3]
  \arrow[from=1-3, to=2-4]
  \arrow[from=1-3, to=2-5]
  \arrow[from=2-1, to=3-3]
  \arrow[from=2-2, to=3-3]
  \arrow[from=2-3, to=3-3]
  \arrow[from=2-4, to=3-3]
  \arrow[from=2-5, to=3-3]
\end{tikzcd}
\]
\caption{Construction of $\DFLCompCatD$}
\label{fig:construction-dfl-compcat}
\end{figure}

We start by defining the displayed bicategories $\DProdType$ and $\DEqualizer$ directly.
To do so, we use fiberwise limits (\Cref{def:fiberwise-limits}).

\begin{defi}
\label[defi]{def:product-equalizer-type}
We define the displayed bicategory \conceptDef{$\DProdType$}{Bicategories.ComprehensionCat.TypeFormers.ProductTypes}{disp_bicat_of_prod_type} over $\FullCompCat$
as the subbicategory $\subbicatD{\Pred}{\Qred}$.
Here $\Pred$ expresses that $\compfunctor : \D \rightarrow \ArrD{\C}$ has fiberwise binary products,
and $\Qred$ expresses that, given a morphism $(\F, \FF, \F_\chi)$ from $\compfunctor_1 : \D_1 \rightarrow \ArrD{\C_1}$ to $\compfunctor_2 : \D_2 \rightarrow \ArrD{\C_2}$,
the fiber functor $\fiberfunctor{\FF}{x}$ preserves binary products for all $x : \C_1$.

We also define the displayed bicategory \conceptDef{$\DEqualizer$}{Bicategories.ComprehensionCat.TypeFormers.EqualizerTypes}{disp_bicat_of_equalizer_type} over $\FullCompCat$
as the subbicategory $\subbicatD{\Pred}{\Qred}$.
Here $\Pred$ expresses that $\compfunctor : \D \rightarrow \ArrD{\C}$ has fiberwise equalizers,
and $\Qred$ expresses that, given a morphism $(\F, \FF, \F_\chi)$ from $\compfunctor_1 : \D_1 \rightarrow \ArrD{\C_1}$ to $\compfunctor_2 : \D_2 \rightarrow \ArrD{\C_2}$,
the functor $\fiberfunctor{\FF}{x}$ preserves equalizers for all $x : \C_1$.
\end{defi}

For unit types, we require something stronger than just fiberwise terminal objects,
namely we also require that the comprehension functor preserves terminal objects.
To understand why, suppose that we have a full comprehension category $\compfunctor : \D \rightarrow \ArrD{\C}$
and that $\D$ has a fiberwise terminal object $\FibTerm$.
We write $\FibTerm[\Gamma]$ for the terminal object in the fiber $\fiber{\D}{\Gamma}$.
We have two ways to interpret terms of type $A$ in context $\Gamma$ in such a comprehension category:
either as sections of $\CtxPr[A]$
or as morphisms from $\FibTerm[\Gamma]$ to $A$ in $\D$ over the identity.
If we require that the comprehension category is full and that $\compfunctor$ preserves terminal objects,
then these two interpretations agree.

\begin{defi}
\label[defi]{def:unit-type}
We define the displayed bicategory \conceptDef{$\DUnit$}{Bicategories.ComprehensionCat.TypeFormers.UnitTypes}{disp_bicat_of_strong_unit_type} over $\FullCompCat$
as the subbicategory $\subbicatD{\Pred}{\Qred}$.
Here $\Pred$ expresses that $\compfunctor : \D \rightarrow \ArrD{\C}$ has fiberwise terminal objects
and that the morphism $\CtxPr[{\FibTerm[\Gamma]}]$ is an isomorphism for every $\Gamma : \C$,
and $\Qred$ expresses that, given a morphism $(\F, \FF, \F_\chi)$ from $\compfunctor_1 : \D_1 \rightarrow \ArrD{\C_1}$ to $\compfunctor_2 : \D_2 \rightarrow \ArrD{\C_2}$,
the functor $\fiberfunctor{\FF}{x}$ preserves terminal objects for all $x : \C_1$.
\end{defi}

Next we look at $\sum$-types, and these are interpreted as left adjoints of the weakening functor~\cite{lawvere:2006}.
In addition, we require the Beck-Chevalley condition for left adjoints (\Cref{def:beck-chevalley}) to guarantee stability under substitution.
Since we also want our $\sum$-types to satisfy the $\eta$-rule,
which says that every inhabitant is equal to a pair,
we look at \textbf{strong $\sum$-types}~\cite[Definition 10.5.2]{jacobs:2001}.

\begin{defi}
\label[defi]{def:sigma-type}
A full univalent comprehension category $\compfunctor : \D \rightarrow \ArrD{\C}$ has \conceptDef{$\sum$-types}{Bicategories.ComprehensionCat.TypeFormers.SigmaTypes}{comp_cat_dependent_sum} if
\begin{itemize}
  \item each functor $\functorfiber{\CtxPr[A]} : \dob{\D}{\Gamma} \rightarrow \dob{\D}{\CtxExt{\Gamma}{A}}$ has a left adjoint $\DepSum{A}$ with unit $\eta$ and counit $\varepsilon$
    where $A : \dob{\D}{\Gamma}$;
  \item the Beck-Chevalley condition for left adjoints holds for every pullback square.
\end{itemize}
In addition, we say that $\compfunctor : \D \rightarrow \ArrD{\C}$ has \conceptDef{strong $\sum$-types}{Bicategories.ComprehensionCat.TypeFormers.SigmaTypes}{strong_dependent_sums} if
for all contexts $\Gamma : \C$ and types $A : \dob{\D}{\Gamma}$ and $B : \dob{\D}{\CtxExt{\Gamma}{A}}$
the following map is an isomorphism.
\[
\begin{tikzcd}[column sep = 45pt]
  {\CtxExt{\CtxExt{\Gamma}{A}}{B}} & {\CtxExt{\CtxExt{\Gamma}{A}}{\substTy{\CtxPr[A]}{\DepSumType{A}{B}}}} & {\CtxExt{\Gamma}{\DepSumType{A}{B}}}
  \arrow["{\CtxExt{\CtxExt{\Gamma}{A}}{\eta(B)}}", from=1-1, to=1-2]
  \arrow["{\CtxPr[A]^*}", from=1-2, to=1-3]
\end{tikzcd}
\]

Finally, we define the displayed bicategory \conceptDef{$\SigmaType$}{Bicategories.ComprehensionCat.TypeFormers.SigmaTypes}{disp_bicat_of_sigma_type} over $\FullCompCat$ as $\fullsubbicatD{\Pred}$
where $\Pred$ says that a full univalent comprehension category has strong $\sum$-types.
\end{defi}

Note that in \Cref{def:sigma-type} we do not require that the morphisms preserve $\sum$-types.
This is automatic since we have an isomorphism $\iso{\CtxExt{\CtxExt{\Gamma}{A}}{B}}{\CtxExt{\Gamma}{\DepSumType{A}{B}}}$~\cite[Proposition 3.5]{clairambault:2014}.
In addition, the type expressing that a full univalent comprehension category $\compfunctor : \D \rightarrow \ArrD{\C}$ has strong $\sum$-types, is a proposition.
This is because the type of left adjoints for a functor is a proposition~\cite[Lemma 5.2]{ahrens:2015}.

Finally, we consider democracy for comprehension categories~\cite[Definition 2.6]{clairambault:2014}.
Roughly speaking, democracy for comprehension categories expresses that the contexts are generated by the types,
and one can intuitively capture this idea with the following phrase:
a comprehension category is democratic if every context has a representative.
We make this idea formal in the following definition.

\begin{defi}
\label[defi]{def:democracy-disp-bicat}
A full univalent comprehension category $\compfunctor : \D \rightarrow \ArrD{\C}$
is called \conceptDef{democratic}{Bicategories.ComprehensionCat.TypeFormers.Democracy}{is_democratic}
if for all contexts $\Gamma : \C$
there is a type $\DemTy{\Gamma} : \dobP{\D}{\ECtx}$
and an isomorphism $\DemIso{\Gamma} : \iso{\Gamma}{\CtxExt{\ECtx}{\DemTy{\Gamma}}}$.
We define the displayed bicategory \conceptDef{$\Democracy$}{Bicategories.ComprehensionCat.TypeFormers.Democracy}{disp_bicat_of_democracy} over $\FullCompCat$ as $\fullsubbicatD{\Pred}$
where $\Pred$ says that a full univalent comprehension category is democratic.
\end{defi}

Note that an equivalent way to formulate democracy is by saying that $\fiberfunctor{\compfunctor}{\ECtx}$ is split essentially surjective.
Let us make a couple of remarks on this definition.
First, the displayed bicategory $\Democracy$ is univalent.
This is because being democratic is a proposition.
Note that we use that $\D$ is univalent in the proof.

\begin{propL}[\coqdocurl{Bicategories.ComprehensionCat.TypeFormers.Democracy}{isaprop_is_democratic}]
\label[propL]{prop:isaprop-democratic}
The type expressing that a full univalent comprehension category $\compfunctor : \D \rightarrow \ArrD{\C}$
is democratic,
is a proposition.
\end{propL}

\begin{proof}
Suppose that we have a context $\Gamma : \C$, types $A_1, A_2 : \dobP{\D}{\ECtx}$,
and isomorphisms $\gamma_1 : \iso{\Gamma}{\CtxExt{\ECtx}{A_1}}$
and $\gamma_2 : \iso{\Gamma}{\CtxExt{\ECtx}{A_2}}$.
We must show that the pairs $(A_1, \gamma_1)$ and $(A_2, \gamma_2)$ are equal.
Since $\D$ is univalent,
it suffices to construct an isomorphism $i$ that makes the following triangle commute.
\[
\begin{tikzcd}
  {\CtxExt{\ECtx}{A_1}} && {\CtxExt{\ECtx}{A_2}} \\
  & \Gamma
  \arrow["{\CtxExt{\ECtx}{i}}", from=1-1, to=1-3]
  \arrow["{\gamma_1}"', from=1-1, to=2-2]
  \arrow["{\gamma_2}", from=1-3, to=2-2]
\end{tikzcd}
\]
Note that we have the following isomorphism.
\[
\begin{tikzcd}
  {\CtxExt{\ECtx}{A_1}} & \Gamma & {\CtxExt{\ECtx}{A_2}}
  \arrow["{\gamma_1^{-1}}", from=1-1, to=1-2]
  \arrow["{\gamma_2}", from=1-2, to=1-3]
\end{tikzcd}
\]
Since $\compfunctor$ is fully faithful,
we get the desired isomorphism $i : \iso{A_1}{A_2}$.
\end{proof}

Second, we do not require the morphisms to preserve democracy,
whereas Clairambault and Dybjer~\cite[Definition 3.6]{clairambault:2014} add such a requirement.
This requirement is in fact redundant, as witnessed by the following proposition.

\begin{propL}[\coqdocurl{Bicategories.ComprehensionCat.TypeFormers.Democracy}{all_functors_democratic}]
\label[propL]{prop:preservation-of-democracy}
Suppose that we have democratic full comprehension categories $\compfunctor_1 : \D_1 \rightarrow \ArrD{\C_1}$ and $\compfunctor_2 : \D_2 \rightarrow \ArrD{\C_2}$,
and a morphism $(\F, \FF, \F_\chi)$ from $\compfunctor_1$ to $\compfunctor_2$.
Then
for each $\Gamma : \C_1$
there is an isomorphism $d_{\Gamma} : \iso{\F(\DemTy{\Gamma})}{\substTy{\ECtxMap[\F(\Gamma)]}{\DemTy{\F(\Gamma)}}}$
making the following diagram commute.
\[
\begin{tikzcd}[column sep = 50pt]
  {\F(\Gamma)}
  &
  & {\F(\CtxExt{\ECtx}{\DemTy{\Gamma}})} \\
  {\CtxExt{\ECtx}{\DemTy{\F(\Gamma)}}}
  & {\CtxExt{\F \> \ECtx}{\substTy{\ECtxMap[\F(\Gamma)]}{\DemTy{\F(\Gamma)}}}}
  & {\CtxExt{\F \> \ECtx}{\F(\DemTy{\Gamma})}}
  \arrow["{\F(\DemIso{\Gamma})}", from=1-1, to=1-3]
  \arrow["{\DemIso{F(\Gamma)}}"', from=1-1, to=2-1]
  \arrow["{\F_\chi(\DemTy{\Gamma})}", from=1-3, to=2-3]
  \arrow["{\PairSub{\ECtxMap[\F \> \ECtx]}{\var}}", from=2-2, to=2-1]
  \arrow["{\CtxExt{\F \> \ECtx}{d_{\Gamma}}}", from=2-3, to=2-2]
\end{tikzcd}
\]
\end{propL}

\begin{proof}
Note that the maps $\DemIso{F(\Gamma)}$ and $\F(\DemIso{\Gamma})$ are isomorphisms,
since functors preserve isomorphisms and by definition of democracy.
In addition, $\F_\chi(\DemTy{\Gamma})$ is an isomorphism, because we look at pseudo morphisms.
Finally, one can also show that $\PairSub{\ECtxMap[\F \> \ECtx]}{\var}$ is an isomorphism using that $\F$ preserves terminal objects.
Since $\compfunctor$ is fully faithful, there must be a unique isomorphism $d_{\Gamma}$ making the diagram above commute.
\end{proof}

Now we have all the necessary ingredients to define the bicategory $\DFLCompCat$
and to prove that it is univalent.

\begin{defi}
\label[defi]{def:dfl-comp-cat}
We define the displayed bicategory \conceptDef{$\DFLCompCatD$}{Bicategories.ComprehensionCat.DFLCompCat}{disp_bicat_of_dfl_full_comp_cat} over $\FullCompCat$ to be
\[
\DUnit \times \DProdType \times \DEqualizer \times \SigmaType \times \Democracy.
\]
We write \conceptDef{$\DFLCompCat$}{Bicategories.ComprehensionCat.DFLCompCat}{bicat_of_dfl_full_comp_cat} for the total bicategory $\total{\DFLCompCatD}$.
\end{defi}

\begin{propL}[\coqdocurl{Bicategories.ComprehensionCat.DFLCompCat}{is_univalent_2_bicat_of_dfl_full_comp_cat}]
\label[propL]{prop:univ-dfl-compcat}
The bicategory $\DFLCompCat$ is univalent.
\end{propL}

\begin{proof}
It suffices to show that each of the displayed bicategories defined in this section is univalent.
Using \Cref{prop:limits-prop}, we get that $\DUnit$, $\DProdType$, and $\DEqualizer$ are univalent.
The displayed bicategory $\SigmaType$ also is univalent,
because the type expressing that a full univalent comprehension category has dependent sums,
is a proposition.
Since we define $\Democracy$ as a full subbicategory, we get from \Cref{prop:isaprop-democratic} that $\Democracy$ is univalent.
\end{proof}

Before continuing, we briefly describe the objects, morphisms, and 2-cells in $\DFLCompCat$.
Objects in $\DFLCompCat$ are democratic full univalent comprehension categories $\compfunctor : \D \rightarrow \ArrD{\C}$
that support unit types, binary product types, equalizer types, and $\sum$-types.
We write $\FibTerm[\Gamma]$ for the terminal object in $\dob{\D}{\Gamma}$,
and we write $A \times B$ for the binary product of two types $A, B : \dob{\D}{\Gamma}$.
The $\sum$-type of $A : \dob{\D}{\Gamma}$ and $B : \dob{\D}{\CtxExt{\Gamma}{A}}$ is denoted by $\DepSumType{A}{B}$,
and, given a context $\Gamma : \C$, we write $\DemTy{\Gamma} : \dobP{\D}{\ECtx}$ for its representative
and $\DemIso{\Gamma} : \iso{\Gamma}{\CtxExt{\ECtx}{\DemTy{\Gamma}}}$ for the isomorphism.
Finally, we have a type $\ExtId{t_1}{t_2}$ for all terms $t_1, t_2 : \Tms{\Gamma}{A}$.
This is because $t_1$ and $t_2$ give rise to morphisms $\overline{t_1}, \overline{t_2} : \FibTerm[\Gamma] \rightarrow A$ in $\D$ over the identity,
and the type $\ExtId{t_1}{t_2}$ is defined to be the equalizer of $\overline{t_1}$ and $\overline{t_2}$.
We can show that $\ExtId{t_1}{t_2}$ is stable under substitution

A morphism from $\compfunctor_1 : \D_1 \rightarrow \ArrD{\C_1}$ to $\compfunctor_2 : \D_2 \rightarrow \ArrD{\C_2}$
consists of a morphism $(\F, \FF, \F_\chi)$ of full univalent comprehension categories from $\compfunctor_1$ to $\compfunctor_2$
such that $\fiberfunctor{\FF}{x}$ preserves finite limits for all $x : \C_1$.
Finally, 2-cells in $\DFLCompCat$ are the same as 2-cells in $\FullCompCat$.

We end this section by looking at adjoint equivalences in $\DFLCompCat$.
More specifically, we show that a 1-cell $(\F, \FF, \F_\chi)$ is an adjoint equivalence
if $\F$ and $\FF$ are.
We use this condition in \Cref{sec:fin-lim} to establish the biequivalence between the bicategory of univalent categories with finite limits and $\DFLCompCat$.

\begin{propL}[\coqdocurl{Bicategories.ComprehensionCat.DFLCompCat}{dfl_full_comp_cat_left_adjoint_equivalence}]
\label[propL]{prop:dfl-compcat-adjequiv}
A 1-cell $(\F, \FF, \F_\chi)$ in $\DFLCompCat$ from $\compfunctor_1 : \D_1 \rightarrow \ArrD{\C_1}$ to $\compfunctor_2 : \D_2 \rightarrow \ArrD{\C_2}$
is an adjoint equivalence
if $\F$ is an adjoint equivalence of categories
and $\FF$ is a displayed adjoint equivalence over $\F$.
\end{propL}

\begin{proof}
To prove this proposition,
we use displayed adjoint equivalences (\Cref{def:disp-adjequiv}).
We start by noting that adjoint equivalences in $\UnivCat$ are the same as adjoint equivalences of categories,
and that displayed adjoint equivalences in $\UnivDispCat$ are the same as displayed adjoint equivalences of displayed categories.
Next we note that there is a displayed adjoint equivalence in $\UnivCatT$ over each adjoint equivalence in $\UnivCat$,
since equivalences preserve terminal objects.

Now we look at displayed adjoint equivalences in $\CleavingT$.
More specifically, we construct a displayed adjoint equivalence in $\CleavingT$ over each adjoint equivalence $(\F, \FF)$ in $\total{\UnivDispCatT}$.
Here we use \Cref{prop:equivalence-induction},
and thus it suffices to only consider identity adjoint equivalences in $\total{\UnivDispCatT}$.
We can directly construct a displayed adjoint equivalence of the identity,
since the identity functor is Cartesian.

Using the same idea, we can show that a morphism $(\F, \FF, \F_\chi)$ in $\CompCat$
is an adjoint equivalence if $\F$ and $\FF$ are adjoint equivalences
and if $\F_\chi$ is a natural isomorphism.
As a consequence, a 1-cell $(\F, \FF, \F_\chi)$ in $\DFullCompCat$ is an adjoint equivalence
if $\F$ and $\FF$ are,
since $\F_\chi$ is a natural isomorphism by the definition of $\DFullCompCat$.

Finally, since $\SigmaType$ and $\Democracy$ are defined as full subbicategories,
there is a displayed adjoint equivalence in those displayed bicategory over every adjoint equivalence in $\DFullCompCat$.
For $\DUnit$, $\DProdType$, and $\DEqualizer$, we use the same idea as for $\CleavingT$.
From this, we conclude that a 1-cell $(\F, \FF, \F_\chi)$ in $\DFLCompCat$ in $\DFLCompCat$ is an adjoint equivalence
if $\F$ is an adjoint equivalence
and $\FF$ is a displayed adjoint equivalence over $\F$.
\end{proof}

\section{The Internal Language of Categories with Finite Limits}
\label{sec:fin-lim}
Now we have everything in place to prove our analogue of the first internal language theorem by Clairambault and Dybjer~\cite[Theorem 6.1]{clairambault:2014}.
More specifically, our goal in this section is to show that $\DFLCompCat$ is biequivalent to the bicategory $\FinLim$ of univalent categories with finite limits.
This theorem thus says that the internal language of univalent categories with finite limits is
extensional Martin-L\"of type theory with unit types, binary product types, and $\sum$-types.

Let us start this section by recalling the definitions of $\FinLim$ and of biequivalences.

\begin{defi}
\label[defi]{def:bicat-fin-lim}
We define the bicategory \conceptDef{$\FinLim$}{Bicategories.Core.Examples.StructuredCategories}{bicat_of_univ_cat_with_finlim} as the subbicategory $\subbicat{\Pred}{\Qred}$ of $\UnivCat$
where $\Pred$ says that a univalent category has finite limits
and $\Qred$ says that a functor preserves finite limits.
\end{defi}

\begin{defiC}[{\cite[Definition 6.2.8]{johnson:2021}}]
\label[defi]{def:biequiv}
A \conceptDef{biequivalence}{Bicategories.PseudoFunctors.Biequivalence}{is_biequivalence} from a bicategory $\B_1$ to a bicategory $\B_2$
consists of pseudofunctors $\F : \B_1 \rightarrow \B_2$ and $\G : \B_2 \rightarrow \B_1$
and pseudotransformations $\eta : \id[\B_2] \Rightarrow \G \cdot \F$ and $\varepsilon : \F \cdot \G \Rightarrow \id[\B_1]$
such that $\eta$ and $\varepsilon$ are adjoint equivalences in the bicategory of pseudofunctors.
The type of biequivalences from $\B_1$ to $\B_2$ is denoted by $\B_1 \simeq \B_2$.
\end{defiC}

Since $\eta$ and $\varepsilon$ are adjoint equivalences,
we have
pseudotransformations $\eta^{-1}$ and $\varepsilon^{-1}$
and four invertible modifications
$m_1 : \modif{\eta \cdot \eta^{-1}}{\id,}$
$m_2 : \modif{\eta^{-1} \cdot \eta}{\id,}$
$m_3 : \modif{\varepsilon \cdot \varepsilon^{-1}}{\id,}$
and $m_4 : \modif{\varepsilon^{-1} \cdot \varepsilon}{\id.}$
To construct a pseudotransformation $\zeta : \F \Rightarrow \G$
that is an adjoint equivalence,
we need to give another pseudotransformation $\xi : \G \Rightarrow \F$
and invertible modifications $m_1 : \modif{\zeta \cdot \xi}{\id[\F]}$ and $m_2 : \modif{\xi \cdot \zeta}{\id[\G].}$
Note that it is not necessary to verify the triangle coherences,
because equivalences can be refined into adjoint equivalences.
However, we have a simpler way to verify that a pseudotransformation is an adjoint equivalence.
Instead of constructing the aforementioned transformations and mofications,
it suffices to verify that $\zeta$ is a pointwise adjoint equivalence.

\begin{propL}[\coqdocurl{Bicategories.Transformations.PseudoTransformation}{pointwise_adjequiv_to_adjequiv}]
\label[propL]{prop:pointwise-adj-equiv}
Let $\B_1$ and $\B_2$ be bicategories such that $\B_2$ is univalent,
and let $\F, \G : \B_1 \rightarrow \B_2$ be pseudofunctors.
A pseudotransformation $\zeta : \F \Rightarrow \G$
is an adjoint equivalence in the bicategory of pseudofunctors
if each $\zeta(x)$ is an adjoint equivalence.
\end{propL}

\begin{proof}[Proof sketch]
We prove this the same way as \Cref{prop:dfl-compcat-adjequiv},
and we need that $\B_2$ is univalent so that we can use \Cref{prop:equivalence-induction}.
Here we use the construction of the bicategory of pseudofunctors via displayed bicategories~\cite[Definition 9.6]{ahrens:2021}.
\end{proof}

Now we set out to construct a biequivalence between $\DFLCompCat$ and $\FinLim$.
To do so, we use \Cref{prop:pointwise-adj-equiv}, and it suffices to construct
\begin{itemize}
  \item a pseudofunctor $\FinLimToCompCat : \FinLim \rightarrow \DFLCompCat$ (\Cref{constr:finlim-to-compcat});
  \item a pseudofunctor $\CompCatToFinLim : \DFLCompCat \rightarrow \FinLim$ (\Cref{constr:compcat-to-finlim});
  \item a pseudotransformation $\FinLimCompCatUnit : \FinLimToCompCat \cdot \CompCatToFinLim \Rightarrow \id[\FinLim]$ (\Cref{constr:unit});
  \item a pseudotransformation $\FinLimCompCatCounit : \id[\DFLCompCat] \Rightarrow \CompCatToFinLim \cdot \FinLimToCompCat$ (\Cref{constr:counit}).
\end{itemize}
We must also show that $\FinLimCompCatUnit$ and $\FinLimCompCatCounit$ are pointwise adjoint equivalences (\Cref{prop:unit-pointwise-adjequiv,prop:counit-pointwise-adjequiv}).
Finally, we construct the desired biequivalence in \Cref{constr:biequiv}.

\subsection{From Categories to Comprehension Categories}
We start by constructing $\FinLimToCompCat : \FinLim \rightarrow \DFLCompCat$.
This pseudofunctor sends every univalent category $\C$ with finite limits to the following comprehension category.
\[
\begin{tikzcd}
  {\ArrD{\C}} && {\ArrD{\C}} \\
  & \C
  \arrow["\id", from=1-1, to=1-3]
\end{tikzcd}
\]
Note that this comprehension category is full, because the identity functor is fully faithful,
and it is univalent, because $\C$ is so.

It is also the case that the comprehension category $\id : \ArrD{\C} \rightarrow \ArrD{\C}$ has the desired type formers.
Since $\C$ is finitely complete, the displayed category $\ArrD{\C}$ has all fiberwise finite limits.
In addition, we can equip $\ArrD{\C}$ with a cleaving for the same reason.
The comprehension category $\id : \ArrD{\C} \rightarrow \ArrD{\C}$ also is democratic,
because every context $\Gamma$ is represented by the type $\ECtxMap[\Gamma] : \Gamma \rightarrow \ECtx$.
Finally, the substitution functor $\functorfiber{s}$ has a left adjoint for each morphism $s : \Gamma \rightarrow \Delta$.
This left adjoint is given by composition, and it satisfies the Beck-Chevalley condition~\cite{seely:1983,seely:1984}.

\begin{problem}
\label[problem]{prob:finlim-to-compcat}
To construct a pseudofunctor $\FinLimToCompCat : \FinLim \rightarrow \DFLCompCat$.
\end{problem}

\begin{construction}{\coqdocurl{Bicategories.ComprehensionCat.Biequivalence.FinLimToDFLCompCat}{finlim_to_dfl_comp_cat_psfunctor}}{prob:finlim-to-compcat}
\label{constr:finlim-to-compcat}
We already described the action of $\FinLimToCompCat$ on objects.
Next we describe the action on 1-cells.
Given a functor $\F : \C_1 \rightarrow \C_2$,
we get the following morphism of comprehension categories.
\[
\begin{tikzcd}
  {\ArrD{\C_1}} & {\ArrD{\C_1}} \\
  {\ArrD{\C_2}} & {\ArrD{\C_2}}
  \arrow[""{name=0, anchor=center, inner sep=0}, "\id", from=1-1, to=1-2]
  \arrow["{\ArrD{\F}}"', from=1-1, to=2-1]
  \arrow["{\ArrD{\F}}", from=1-2, to=2-2]
  \arrow[""{name=1, anchor=center, inner sep=0}, "\id"', from=2-1, to=2-2]
  \arrow["\id", shorten <=4pt, shorten >=4pt, Rightarrow, from=0, to=1]
\end{tikzcd}
\]
The functor $\ArrD{\F}$ preserves fiberwise finite limits, because $\F$ preserves finite limits.
Finally, every natural transformation $\nt : \F \Rightarrow \G$ gives rise to the 2-cell $(\nt, \ArrD{\nt})$.
\end{construction}

It is important to note the difference between our construction of $\FinLimToCompCat$ and the one by Clairambault and Dybjer~\cite[Proposition 5.2]{clairambault:2014}.
While we use the arrow category of $\C$, they use the right adjoint splitting~\cite{benabou:1985,hofmann:1994}.
To understand the difference, we reformulate their construction using displayed categories.
Let $\slice{\C}{\Gamma}$ be the displayed category over $\C$ whose objects over $\Delta$ are context morphisms $\Delta \rightarrow \Gamma$.
The construction by Clairambault and Dybjer uses the displayed category $\D$ over $\C$ whose objects over $\Gamma : \C$
are displayed functors $\F : \slice{\C}{\Gamma} \rightarrow \Arr{\C}$ over the identity.
If we work in set-theoretic foundations,
then this displayed category is a split fibration over $\C$ equivalent to $\ArrD{\C}$.
\[
\begin{tikzcd}
  \D & {\ArrD{\C}} & {\ArrD{\C}} \\
  & \C
  \arrow["\simeq", from=1-1, to=1-2]
  \arrow["\id", from=1-2, to=1-3]
\end{tikzcd}
\]
As explained in \Cref{sec:models},
our focus is on univalent comprehension categories,
and thus we do not consider split fibrations,
and we do not use any splitting construction.

\subsection{From Comprehension Categories to Categories}
Next we construct the pseudofunctor $\CompCatToFinLim : \DFLCompCat \rightarrow \FinLim$.
This pseudofunctor sends every DFL comprehension category $\compfunctor : \D \rightarrow \ArrD{\C}$ to the univalent category $\C$ of contexts~\cite[Proposition 4.1]{clairambault:2014}.
The main work thus lies in showing that $\C$ is finitely complete.

To prove that the category of contexts is finitely complete,
we show that the comprehension functor $\compfunctor : \D \rightarrow \ArrD{\C}$ of a DFL comprehension category
is an adjoint equivalence of displayed categories.
This gives us an equivalence $\adjequiv{\fiber{\D}{\Gamma}}{\slice{\C}{\Gamma}}$ of fiber categories for every context $\Gamma$,
and, in particular, we have equivalences $\fiber{\D}{\ECtx} \simeq \slice{\C}{\ECtx} \simeq \C$.
Since $\D$ has fiberwise finite limits,
each fiber category $\fiber{\D}{\Gamma}$ is finitely complete,
and thus $\C$ also has all finite limits.

To prove that $\compfunctor : \D \rightarrow \ArrD{\C}$ is an adjoint equivalence,
we show that it is both essentially surjective and fully faithful.
Note that this argument is constructively valid,
because $\D$ is univalent.
We also assumed that the comprehension category $\compfunctor : \D \rightarrow \ArrD{\C}$ is full,
meaning that the functor $\compfunctor : \D \rightarrow \ArrD{\C}$ is fully faithful.
Hence, it suffices to prove that $\compfunctor$ is essentially surjective,
and for that we follow Clairambault and Dybjer~\cite[Proposition 2.9]{clairambault:2014}.

\begin{propL}[\coqdocurl{Bicategories.ComprehensionCat.ComprehensionEso}{comprehension_eso}]
\label[propL]{prop:comprehension-eso}
Let $\compfunctor : \D \rightarrow \ArrD{\C}$ be a DFL comprehension category.
Then the functor $\compfunctor$ is essentially surjective.
\end{propL}

\begin{proof}
To prove this, we show that the fiber functor $\fiberfunctor{\compfunctor}{\Gamma}$ is essentially surjective for each $\Gamma : \C$.
Let $s : \Delta \rightarrow \Gamma$ be a context morphism.
Our goal is to construct a type $A$ such that $\CtxPr[A]$ is isomorphic to $s$ in the slice category $\slice{\C}{\Gamma}$.

We start by introducing some notation.
Since our comprehension category is democratic,
we get types $\DemTy{\Gamma} : \dobP{\D}{\ECtx}$ and $\DemTy{\Delta} : \dobP{\D}{\ECtx}$,
and isomorphisms $\DemIso{\Gamma} : \iso{\Gamma}{\CtxExt{\ECtx}{\DemTy{\Gamma}}}$
and $\DemIso{\Delta} : \iso{\Delta}{\CtxExt{\ECtx}{\DemTy{\Delta}}}$.
Let us write $\widehat{\Delta} : \dob{\D}{\Gamma}$ and $\widehat{\Gamma} : \dob{\D}{\CtxExt{\Gamma}{\widehat{\Delta}}}$
for $\substTy{\ECtxMap}{\DemTy{\Delta}}$ and $\substTy{\ECtxMap}{\DemTy{\Gamma}}$ respectively.

Next we construct two terms of type $\widehat{\Gamma}$ in context $\CtxExt{\Gamma}{\widehat{\Delta}}$.
To do so, we construct the two morphisms from $\CtxExt{\Gamma}{\widehat{\Delta}}$ to $\CtxExt{\CtxExt{\Gamma}{\widehat{\Delta}}}{\widehat{\Gamma}}$.
First, we define the map $l$ using the following diagram and the universal mapping property of the pullback.
\[
\begin{tikzcd}
  {\CtxExt{\Gamma}{\widehat{\Delta}}} & \Gamma \\
  & {\CtxExt{\CtxExt{\Gamma}{\widehat{\Delta}}}{\widehat{\Gamma}}} & {\CtxExt{\ECtx}{\DemTy{\Gamma}}} \\
  & {\CtxExt{\Gamma}{\widehat{\Delta}}} & \ECtx
  \arrow["\CtxPr", from=1-1, to=1-2]
  \arrow["l"{description}, dashed, from=1-1, to=2-2]
  \arrow["\id"', bend right=30, from=1-1, to=3-2]
  \arrow["{\DemIso{\Gamma}}", bend left=30, from=1-2, to=2-3]
  \arrow["\compfunctor(\substTyMor{\ECtxMap})", from=2-2, to=2-3]
  \arrow["\CtxPr"', from=2-2, to=3-2]
  \arrow["\CtxPr", from=2-3, to=3-3]
  \arrow["\ECtxMap"', from=3-2, to=3-3]
\end{tikzcd}
\]
Second, we also define the map $r$ using the universal mapping property of the pullback.
\[
\begin{tikzcd}
  {\CtxExt{\Gamma}{\widehat{\Delta}}} & {\CtxExt{\ECtx}{\DemTy{\Delta}}} & \Delta & \Gamma \\
  &&& {\CtxExt{\CtxExt{\Gamma}{\widehat{\Delta}}}{\widehat{\Gamma}}} & {\CtxExt{\ECtx}{\DemTy{\Gamma}}} \\
  &&& {\CtxExt{\Gamma}{\widehat{\Delta}}} & \ECtx
  \arrow["\compfunctor(\substTyMor{\ECtxMap})", from=1-1, to=1-2]
  \arrow["r"{description}, dashed, from=1-1, to=2-4]
  \arrow["\id"', bend right=30, from=1-1, to=3-4]
  \arrow["{\DemIso{\Delta}^{-1}}", from=1-2, to=1-3]
  \arrow["s", from=1-3, to=1-4]
  \arrow["{\DemIso{\Gamma}}", bend left=30, from=1-4, to=2-5]
  \arrow["\compfunctor(\substTyMor{\ECtxMap})", from=2-4, to=2-5]
  \arrow["\CtxPr"', from=2-4, to=3-4]
  \arrow["\CtxPr", from=2-5, to=3-5]
  \arrow["\ECtxMap"', from=3-4, to=3-5]
\end{tikzcd}
\]
Note that $l$ and $r$ are terms by construction.

Since extensional identity types can be derived from equalizers,
we have a type $\ExtId{l}{r}$ in context $\CtxExt{\Gamma}{\widehat{\Delta}}$.
We define the desired type $A$ to be $\DepSumType{\widehat{\Delta}}{(\ExtId{l}{r})}$, which is a type in context $\Gamma$.
Note that we have the following isomorphism, because we assumed $\sum$-types to be strong.
\[
\iso{\DepSumType{\widehat{\Delta}}{(\ExtId{l}{r})}}{\CtxExt{\CtxExt{\Gamma}{\widehat{\Delta}}}{(\ExtId{l}{r})}}
\]

We also have a morphism $f$ from $\CtxExt{\CtxExt{\Gamma}{\widehat{\Delta}}}{(\ExtId{l}{r})}$ to $\Delta$,
which is defined as follows.
\[
\begin{tikzcd}[column sep = large]
  {\CtxExt{\CtxExt{\Gamma}{\widehat{\Delta}}}{(\ExtId{l}{r})}} & {\CtxExt{\Gamma}{\widehat{\Delta}}} & {\CtxExt{\ECtx}{\DemTy{\Delta}}} & \Delta
  \arrow["\CtxPr", from=1-1, to=1-2]
  \arrow["{\compfunctor(\substTyMor{\ECtxMap})}", from=1-2, to=1-3]
  \arrow["{\DemIso{\Delta}^{-1}}", from=1-3, to=1-4]
\end{tikzcd}
\]
To construct the inverse of $f$, we define a context morphism $h : \Delta \rightarrow \CtxExt{\Gamma}{\widehat{\Delta}}$
and a term $t$ of type $\substTm{h}{l} = \substTm{h}{r}$ in context $\Delta$.
We define $h$ using the universal property of the pullback.
\[
\begin{tikzcd}
  \Delta \\
  & {\CtxExt{\Gamma}{\widehat{\Delta}}} & {\CtxExt{\ECtx}{\DemTy{\Delta}}} \\
  & \Gamma & \ECtx
  \arrow["h"{description}, dashed, from=1-1, to=2-2]
  \arrow["{\DemIso{\Delta}}", bend left=20, from=1-1, to=2-3]
  \arrow["s"', bend right=30, from=1-1, to=3-2]
  \arrow["\compfunctor(\substTyMor{s}{\ECtxMap})", from=2-2, to=2-3]
  \arrow["\CtxPr"', from=2-2, to=3-2]
  \arrow["\CtxPr", from=2-3, to=3-3]
  \arrow["\ECtxMap"', from=3-2, to=3-3]
\end{tikzcd}
\]
To define the term $t$,
it suffices to prove that the underlying morphisms of $\CtxPr[{\FibTerm[\Delta]}] \cdot \substTm{h}{l}$ and $\CtxPr[{\FibTerm[\Delta]}] \cdot \substTm{h}{r}$ are equal.
For a proof, we refer the reader to the formalization.

All in all, we constructed a type $A$ and an isomorphism between $\CtxExt{\Gamma}{A}$ and $\Delta$.
This isomorphism lifts to the slice category $\slice{\C}{\Gamma}$,
and thus $\fiberfunctor{\compfunctor}{\Gamma}$ is essentially surjective.
\end{proof}

Now we construct the desired pseudofunctor.

\begin{problem}
\label[problem]{prob:compcat-to-finlim}
To construct a pseudofunctor $\CompCatToFinLim : \DFLCompCat \rightarrow \FinLim$.
\end{problem}

\begin{construction}{\coqdocurl{Bicategories.ComprehensionCat.Biequivalence.DFLCompCatToFinLim}{dfl_comp_cat_to_finlim_psfunctor}}{prob:compcat-to-finlim}
\label{constr:compcat-to-finlim}
We map every DFL comprehension category $\chi : \D \rightarrow \ArrD{\C}$ to the category $\C$.
Note that $\C$ is univalent, because we assumed our comprehension category to be univalent.
In addition, $\C$ has all finite limits,
because we have equivalences $\C \simeq \slice{\C}{\ECtx} \simeq \fiber{\D}{\ECtx}$
and because $\D$ has all fiberwise finite limits.

Every 1-cell $(\F, \FF, \F_\chi)$ from a DFL comprehension category $\compfunctor_1 : \D_1 \rightarrow \ArrD{\C_1}$ to $\compfunctor_2 : \D_2 \rightarrow \ArrD{\C_2}$
is mapped to $\F$ by $\CompCatToFinLim$,
so we need to show that $\F$ preserves finite limits.
To do so, we use the following natural isomorphisms.
\[
\begin{tikzcd}
  {\fiber{\D_1}{\ECtx}} & {\slice{\C_1}{\ECtx}} & {\C_1} \\
  {\fiber{\D_2}{\F \> \ECtx}} & {\slice{\C_2}{\F \> \ECtx}} & {\C_2}
  \arrow[""{name=0, anchor=center, inner sep=0}, "\simeq", from=1-1, to=1-2]
  \arrow["{\fiberfunctor{\FF}{\ECtx}}"', from=1-1, to=2-1]
  \arrow[""{name=1, anchor=center, inner sep=0}, "\simeq", from=1-2, to=1-3]
  \arrow[from=1-2, to=2-2]
  \arrow["\F", from=1-3, to=2-3]
  \arrow[""{name=2, anchor=center, inner sep=0}, "\simeq"', from=2-1, to=2-2]
  \arrow[""{name=3, anchor=center, inner sep=0}, "\simeq"', from=2-2, to=2-3]
  \arrow["\cong"{description}, shorten <=4pt, shorten >=4pt, Rightarrow, from=0, to=2]
  \arrow["\cong"{description}, shorten <=4pt, shorten >=4pt, Rightarrow, from=1, to=3]
\end{tikzcd}
\]
The functor $\fiberfunctor{\FF}{\ECtx}$ preserves finite limits by assumption,
and thus $\F$ preserves finite limits as well.
Finally, every 2-cell $(\nt, \tautau)$ is mapped to $\nt$.
\end{construction}

In \Cref{constr:compcat-to-finlim} we transferred properties and structure along adjoint equivalences and natural isomorphisms.
More specifically, we used that a category $\C'$ has finite limits if we have an equivalence $\C \simeq \C'$ such that $\C$ has all finite limits.
We also used that $\G$ preserves finite limits if $\F$ preserves finite limits and if we have a diagram as follows.
\[
\begin{tikzcd}
  {\C_1} & {\C_1'} \\
  {\C_2} & {\C_2'}
  \arrow[""{name=0, anchor=center, inner sep=0}, "\simeq", from=1-1, to=1-2]
  \arrow["\F"', from=1-1, to=2-1]
  \arrow["\G", from=1-2, to=2-2]
  \arrow[""{name=1, anchor=center, inner sep=0}, "\simeq"', from=2-1, to=2-2]
  \arrow["\cong"{description}, shorten <=4pt, shorten >=4pt, Rightarrow, from=0, to=1]
\end{tikzcd}
\]
Since these categories are univalent,
both principles follow directly from \Cref{prop:equivalence-induction}.

\subsection{The Pseudotransformations}
We finish the construction by defining the necessary pseudotransformations
and by proving that they are adjoint equivalences.

\begin{problem}
\label[problem]{prob:unit}
 To construct a pseudotransformation $\FinLimCompCatUnit : \FinLimToCompCat \cdot \CompCatToFinLim \Rightarrow \id[\FinLim]$.
\end{problem}

\begin{construction}{\coqdocurl{Bicategories.ComprehensionCat.Biequivalence.Unit}{finlim_dfl_comp_cat_unit}}{prob:unit}
\label{constr:unit}
Given a univalent category $\C$ with finite limits,
the functor $\FinLimCompCatUnit(\C) : \C \rightarrow \C$ is defined to be the identity.
If we have a functor $\F : \C_1 \rightarrow \C_2$,
the naturality square $\FinLimCompCatUnit(f)$, depicted below, is defined to be identity pointwise.
\[
\begin{tikzcd}
  {\C_1} & {\C_1} \\
  {\C_2} & {\C_2}
  \arrow[""{name=0, anchor=center, inner sep=0}, "\id", from=1-1, to=1-2]
  \arrow["\F"', from=1-1, to=2-1]
  \arrow["\F", from=1-2, to=2-2]
  \arrow[""{name=1, anchor=center, inner sep=0}, "\id"', from=2-1, to=2-2]
  \arrow["\id", shorten <=4pt, shorten >=4pt, Rightarrow, from=0, to=1]
\end{tikzcd}
\]
This data satisfies the coherence laws for pseudotransformations.
\end{construction}

\begin{propL}[\coqdocurl{Bicategories.ComprehensionCat.Biequivalence.Unit}{finlim_dfl_comp_cat_unit_pointwise_equiv}]
\label[propL]{prop:unit-pointwise-adjequiv}
The pseudotransformation $\FinLimCompCatUnit$ is a pointwise adjoint equivalence.
\end{propL}

\begin{proof}
This follows from \Cref{prop:pointwise-adj-equiv}
and the fact that adjoint equivalences in $\FinLim$ are the same as adjoint equivalences of categories.  
\end{proof}

The construction of the other pseudotransformation is more interesting,
and we use \Cref{prop:comprehension-eso} to prove that it is an adjoint equivalence.

\begin{problem}
\label[problem]{prob:counit}
To construct a pseudotransformation $\FinLimCompCatCounit : \id[\DFLCompCat] \Rightarrow \CompCatToFinLim \cdot \FinLimToCompCat$.
\end{problem}

\begin{construction}{\coqdocurl{Bicategories.ComprehensionCat.Biequivalence.Counit}{finlim_dfl_comp_cat_counit}}{prob:counit}
\label{constr:counit}
Let $\compfunctor : \D \rightarrow \ArrD{\C}$ be a DFL comprehension category.
Note that $\CompCatToFinLim \cdot \FinLimToCompCat$ maps $\compfunctor$ to $\id : \ArrD{\C} \rightarrow \ArrD{\C}$.
As such, we need to construct a functor $\F : \C \rightarrow \C$ and a displayed functor $\FF : \D \rightarrow \ArrD{\C}$.
We take $\F$ to be the identity and $\FF$ to be $\compfunctor$.
Note that $\compfunctor$ preserves fiberwise finite limits,
because it is an adjoint equivalence by \Cref{prop:comprehension-eso}.

Now suppose that we have a morphism $(\F, \FF, \F_\chi)$
from $\compfunctor_1 : \D_1 \rightarrow \ArrD{\C_1}$
to $\compfunctor_2 : \D_2 \rightarrow \ArrD{\C_2}$.
To construct the naturality square $\FinLimCompCatCounit(\F, \FF, \F_\chi)$,
we need a natural transformation $\tau : \F \Rightarrow \F$
and a displayed natural isomorphism $\tautau$ over $\tau$ as follows.
\[
\begin{tikzcd}
  {\D_1} & {\ArrD{\C_1}} \\
  {\D_2} & {\ArrD{\C_2}}
  \arrow[""{name=0, anchor=center, inner sep=0}, "{\compfunctor_1}", from=1-1, to=1-2]
  \arrow["\FF"', from=1-1, to=2-1]
  \arrow["{\ArrD{\F}}", from=1-2, to=2-2]
  \arrow[""{name=1, anchor=center, inner sep=0}, "{\compfunctor_2}"', from=2-1, to=2-2]
  \arrow["\tautau", shorten <=4pt, shorten >=4pt, Rightarrow, from=0, to=1]
\end{tikzcd}
\]
For $\tau$ we take the identity
and for $\tautau$ we take $\F_\chi$.
Note that $\tautau$ is a natural isomorphism,
because we restricted ourselves to pseudo morphisms.
\end{construction}

\begin{propL}[\coqdocurl{Bicategories.ComprehensionCat.Biequivalence.Counit}{finlim_dfl_comp_cat_counit_pointwise_equiv}]
\label[propL]{prop:counit-pointwise-adjequiv}
The pseudotransformation $\FinLimCompCatCounit$ is a pointwise adjoint equivalence.
\end{propL}

\begin{proof}
By \Cref{prop:pointwise-adj-equiv,prop:dfl-compcat-adjequiv},
it suffices to show that $\id : \C \rightarrow \C$ and $\compfunctor : \D \rightarrow \ArrD{\C}$ are adjoint equivalences
for each DFL comprehension category $\compfunctor : \D \rightarrow \ArrD{\C}$.
One can verify directly that the identity is an adjoint equivalence,
and we know from \Cref{prop:comprehension-eso} that $\compfunctor$ is a displayed adjoint equivalence over the identity.
\end{proof}

We conclude this section by putting all parts of the desired biequivalence together.

\begin{problem}
\label[problem]{prob:biequiv}
To construct a biequivalence between bicategories $\DFLCompCat$ and $\FinLim$.
\end{problem}

\begin{construction}{\coqdocurl{Bicategories.ComprehensionCat.Biequivalence.Biequiv}{finlim_biequiv_dfl_comp_cat_psfunctor}}{prob:biequiv}
\label{constr:biequiv}
The necessary pseudofunctors are given in \Cref{constr:finlim-to-compcat,constr:compcat-to-finlim},
and the pseudotransformations are given in \Cref{constr:unit,constr:counit}.
These pseudotransformations are shown to be pointwise equivalences in \Cref{prop:unit-pointwise-adjequiv,prop:counit-pointwise-adjequiv},
so by \Cref{prop:pointwise-adj-equiv} they are adjoint equivalences in the bicategory of pseudofunctors.
Hence, we have a biequivalence between $\DFLCompCat$ and $\FinLim$.
\end{construction}

\section{The Internal Language of Locally Cartesian Closed Categories}
\label{sec:lccc}
Up to now we have only considered the internal language theorem for univalent categories with finite limits,
and we showed that their internal language is given by
extensional Martin-L\"of type theory with unit types, binary products types, and $\sum$-types.
In the remainder of this paper, we extend this theorem to classes of categories with more structure
and to more expressive versions of type theory.
We start by considering locally Cartesian closed categories,
and we show that their internal language also supports $\prod$-types.
This statement was the theorem originally proven by Seely~\cite[Theorem 6.3]{seely:1984}
and later corrected by Clairambault and Dybjer~\cite[Theorem 6.1]{clairambault:2014}.

Our goal in this section is to construct a biequivalence between
the bicategories $\LCCC$ of univalent locally Cartesian closed categories
and $\DFLCompCatPi$ of DFL comprehension categories that support $\prod$-types.
Let us start by defining these. 

\begin{defi}
\label[defi]{def:lccc-bicat}
A category $\C$ with finite limits is said to be \conceptDef{locally Cartesian closed}{CategoryTheory.LocallyCartesianClosed.LocallyCartesianClosed}{is_locally_cartesian_closed} if
the pullback functor $\functorfiber{f} : \slice{\C}{y} \rightarrow \slice{\C}{x}$ has a right adjoint
for all morphisms $f : x \rightarrow y$.
A functor $\F : \C_1 \rightarrow \C_2$ \conceptDef{preserves exponentials}{CategoryTheory.LocallyCartesianClosed.Preservation}{preserves_locally_cartesian_closed}
if the Beck-Chevalley condition for right adjoints holds for the following square.
\[
\begin{tikzcd}
  {\slice{\C_1}{y}} & {\slice{\C_2}{\F(y)}} \\
  {\slice{\C_1}{x}} & {\slice{\C_2}{\F(x)}}
  \arrow[""{name=0, anchor=center, inner sep=0}, "{\slice{\F}{y}}", from=1-1, to=1-2]
  \arrow["{\functorfiber{f}}"', from=1-1, to=2-1]
  \arrow["{\functorfiber{\F(f)}}", from=1-2, to=2-2]
  \arrow[""{name=1, anchor=center, inner sep=0}, "{\slice{\F}{x}}"', from=2-1, to=2-2]
  \arrow["\cong"{description}, shorten <=4pt, shorten >=4pt, Rightarrow, from=0, to=1]
\end{tikzcd}
\]
Here the functor $\slice{\F}{x} : \slice{\C_1}{x} \rightarrow \slice{\C_2}{\F(x)}$ sends a morphism $g : w \rightarrow x$
to $\F(g) : \F(w) \rightarrow \F(x)$.

We define the displayed bicategory \conceptDef{$\LCCCD$}{Bicategories.Core.Examples.StructuredCategories}{disp_bicat_univ_lccc} over $\FinLim$ as the subbicategory $\subbicat{\Pred}{\Qred}$
where $\Pred$ says that a univalent category with finite limits is locally Cartesian closed
and where $\Qred$ says that a functor that preserves finite limits also preserves exponentials.
The total bicategory $\total{\LCCCD}$ is denoted by \conceptDef{$\LCCC$}{Bicategories.Core.Examples.StructuredCategories}{bicat_of_univ_lccc}.
\end{defi}

\begin{defi}
\label[defi]{def:pi-types}
Let $\compfunctor : \D \rightarrow \ArrD{\C}$ be a full univalent comprehension category.
We say that $\compfunctor$ \conceptDef{supports $\prod$-types}{Bicategories.ComprehensionCat.TypeFormers.PiTypes}{comp_cat_dependent_prod} if
\begin{itemize}
  \item for every $A : \dob{\D}{\Gamma}$ the functor $\functorfiber{\CtxPr[A]} : \dob{\D}{\Gamma} \rightarrow \dob{\D}{\CtxExt{\Gamma}{A}}$
  has a right adjoint $\DepProd{A}$ with unit $\eta$ and counit $\varepsilon$;
  \item the Beck-Chevalley condition for right adjoints holds for every pullback square.
\end{itemize}
\end{defi}

If $\compfunctor : \D \rightarrow \ArrD{\C}$ supports $\prod$-types,
then the functor $\functorfiber{s} : \dob{\D}{\Gamma} \rightarrow \dob{\D}{\Delta}$ has a right adjoint
whenever we have a commutative triangle as follows.
\[
\begin{tikzcd}
  \Delta && {\CtxExt{\Gamma}{A}} \\
  & \Gamma
  \arrow["\cong", from=1-1, to=1-3]
  \arrow["s"', from=1-1, to=2-2]
  \arrow["{\CtxPr[A]}", from=1-3, to=2-2]
\end{tikzcd}
\]
We use this observation to say when a 1-cell
$(\F, \FF, \F_\chi)$ from $\compfunctor_1 : \D_1 \rightarrow \ArrD{\C_1}$ to $\compfunctor_2 : \D_2 \rightarrow \ArrD{\C_2}$
preserves $\prod$-types.
This is because $(\F, \FF, \F_\chi)$ only preserves context extension up to isomorphism
meaning that we have the following commutative triangle.
\[
\begin{tikzcd}
  {\F(\CtxExt{\Gamma}{A})} && {\CtxExt{\F(\Gamma)}{\FF(A)}} \\
  & {\F(\Gamma)}
  \arrow["\cong", from=1-1, to=1-3]
  \arrow["{\F(\CtxPr[A])}"', from=1-1, to=2-2]
  \arrow["{\CtxPr[\FF(A)]}", from=1-3, to=2-2]
\end{tikzcd}
\]
If $\compfunctor_2 : \D_2 \rightarrow \ArrD{\C_2}$ has $\prod$-types,
then $\functorfiber{\CtxPr[\FF(A)]}$ has a right adjoint
and thus $\functorfiber{\F(\CtxPr[A])}$ as well.

\begin{defi}
\label[defi]{def:pi-types-bicat}
Let $(\F, \FF, \F_\chi)$ be a 1-cell from $\compfunctor_1 : \D_1 \rightarrow \ArrD{\C_1}$ to $\compfunctor_2 : \D_2 \rightarrow \ArrD{\C_2}$.
We say that $(\F, \FF, \F_\chi)$ \conceptDef{preserves $\prod$-types}{Bicategories.ComprehensionCat.TypeFormers.PiTypes}{preserves_comp_cat_dependent_prod} if
for all objects $x, y : \C_1$ and morphisms $f : x \rightarrow y$ the Beck-Chevalley condition for right adjoints holds for the following square.
\[
\begin{tikzcd}
  {\fiber{\D_1}{y}} & {\fiber{\D_2}{\F(y)}} \\
  {\fiber{\D_1}{x}} & {\fiber{\D_2}{\F(x)}}
  \arrow[""{name=0, anchor=center, inner sep=0}, "{\fiberfunctor{\FF}{y}}", from=1-1, to=1-2]
  \arrow["{\functorfiber{f}}"', from=1-1, to=2-1]
  \arrow["{\functorfiber{\F(f)}}", from=1-2, to=2-2]
  \arrow[""{name=1, anchor=center, inner sep=0}, "{\fiberfunctor{\FF}{x}}"', from=2-1, to=2-2]
  \arrow["\cong"{description}, shorten <=4pt, shorten >=4pt, Rightarrow, from=0, to=1]
\end{tikzcd}
\]
Finally, we define the displayed bicategory \conceptDef{$\DFLCompCatPiD$}{Bicategories.ComprehensionCat.TypeFormers.PiTypes}{disp_bicat_of_pi_type} over $\DFLCompCat$ to be $\subbicatD{\Pred}{\Qred}$
where $\Pred$ says that $\compfunctor : \D \rightarrow \ArrD{\C}$ supports $\prod$-types
and $\Qred$ says that $(\F, \FF, \F_\chi)$  preserves $\prod$-types.
We write \conceptDef{$\DFLCompCatPi$}{Bicategories.ComprehensionCat.TypeFormers.PiTypes}{bicat_of_pi_type_dfl_full_comp_cat} for the total bicategory $\total{\DFLCompCatPiD}$.
\end{defi}

Before we continue, let us make a couple of remarks on \Cref{def:lccc-bicat,def:pi-types-bicat}.
First, both $\LCCC$ and $\DFLCompCatPi$ are univalent.
This can be proven quite directly, since the relevant type families are valued in propositions.
Second, the displayed 2-cells of $\LCCCD$ and $\DFLCompCatPiD$ are trivial,
since the type $\dtwo{\ff}{\gg}{\tau}$ is the unit type for all displayed 1-cells $\ff : \dmor{\xx}{\yy}{f}$ and $\gg : \dmor{\xx}{\yy}{g}$, and 2-cells $\tau : f \Rightarrow g$.

To prove that $\LCCC$ and $\DFLCompCatPi$ are biequivalent, we extend the biequivalence defined in \Cref{sec:fin-lim} using \textbf{displayed biequivalences}~\cite[Definition 8.9]{ahrens:2021}.
The notion of displayed biequivalences lifts the notion of biequivalence to the setting of displayed bicategories,
and we can use them to construct biequivalences modularly.
More specifically,
if we have a biequivalence from $\B_1$ to $\B_2$ and displayed bicategories $\D_1$ and $\D_2$ over $\B_1$ and $\B_2$ respectively,
then a displayed biequivalence specifies the data and properties necessary to extend the given biequivalence to the total bicategories $\total{\D_1}$ and $\total{\D_2}$.

\subsection{Displayed Biequivalences}
We start by recalling displayed biequivalences.
To define this notion, Ahrens et al.~\cite[Definition 8.9]{ahrens:2021}
first defined displayed pseudofunctors, displayed pseudotransformations, and displayed invertible modifications.
Recall that every pseudofunctor $\F : \B_1 \rightarrow \B_2$ comes with invertible 2-cells
$\identitor{\F}(x) : \id[\F(x)] \twocell \F(\id[x])$
and $\compositor{\F}(f, g) : \F(f) \cdot \F(g) \twocell \F(f \cdot g)$,
which we call the \textbf{identitor} and \textbf{compositor} respectively.

\begin{defiC}[{\cite[Definition 8.2]{ahrens:2021}}]
\label[defi]{def:disp-psfunctor}
Let $\B_1$ and $\B_2$ be bicategories,
and let $\D_1$ and $\D_2$ be displayed bicategories over $\B_1$ and $\B_2$ respectively.
A \conceptDef{displayed pseudofunctor}{Bicategories.DisplayedBicats.DispPseudofunctor}{disp_psfunctor} $\FF : \dmor{\D_1}{\D_2}{\F}$ from $\D_1$ to $\D_2$ over $\F : \B_1 \rightarrow \B_2$
consists of
\begin{itemize}
  \item a map sending $\xx : \dob{\D_1}{x}$ to $\FF(\xx) : \dob{\D_2}{\F(x)}$ for each $x : \B_1$;
  \item a map sending $\ff : \dmor{\xx}{\yy}{f}$ to $\FF(\ff) : \dmor{\FF(\xx)}{\FF(\yy)}{\F(f)}$
    for all objects $\xx$ over $x$ and $\yy$ over $y$, and all 1-cells $f : x \rightarrow y$;
  \item a map sending $\tautau : \dtwo{\ff}{\gg}{\tau}$ to $\FF(\tautau) : \dtwo{\FF(\ff)}{\FF(\gg)}{\F(\tau)}$
    for all 1-cell $\ff$ and $\gg$ over $f$ and $g$ respectively;
  \item a displayed invertible 2-cell $\identitor{\FF}(\xx) : \dtwo{\idd[\FF(\xx)]}{\FF(\idd[\xx])}{\identitor{\F}(x)}$ for each object $\xx : \dob{\D_1}{x}$;
  \item a displayed invertible 2-cell $\compositor{\FF}(\ff, \gg) : \dtwo{\FF(\ff) \cdot \FF(\gg)}{\FF(\ff \cdot \gg)}{\compositor{\F}(f, g)}$
    for all 1-cells $\ff : \dtwo{\xx}{\yy}{f}$ and $\gg : \dtwo{\yy}{\zz}{g}$.
\end{itemize}
This data is required to satisfy various laws,
for which we refer the reader to the literature~\cite[Definition 8.2]{ahrens:2021}.
\end{defiC}

If $\tau$ is a pseudotransformation from $\F$ to $\G$,
then every 1-cell $f : x \rightarrow y$
gives rise to an invertible 2-cell $\tau(f) : \tau(x) \cdot \G(f) \twocell \F(f) \cdot \tau(y)$
witnessing naturality.

\begin{defiC}[{\cite[Definition 8.3]{ahrens:2021}}]
\label[defi]{def:disp-pstrans}
Let $\B_1$ and $\B_2$ be bicategories,
and let $\D_1$ and $\D_2$ be displayed bicategories over $\B_1$ and $\B_2$ respectively.
Suppose that we also have pseudofunctors $\F, \G : \B_1 \rightarrow \B_2$
and displayed pseudofunctors $\FF : \dmor{\D_1}{\D_2}{\F}$ and $\GG : \dmor{\D_1}{\D_2}{\G}$.
A \conceptDef{displayed pseudotransformation}{Bicategories.DisplayedBicats.DispTransformation}{disp_pstrans} $\tautau : \dtwo{\FF}{\GG}{\tau}$ over $\tau : \F \Rightarrow \G$
consists of a displayed 1-cell $\tautau(\xx) : \dmor{\FF(\xx)}{\GG(\xx)}{\tau(x)}$ for each $\xx$ over $x : \B_1$
and a displayed invertible 2-cell $\tautau(\ff) : \dtwo{\tautau(x) \cdot \GG(\ff)}{\FF(\ff) \cdot \tautau(y)}{\tau(f)}$ for each $\ff : \dmor{\xx}{\yy}{f}$.
This data is subject to various laws,
which the reader can find in the literature~\cite[Definition 8.3]{ahrens:2021}.
\end{defiC}

\begin{defiC}[{\cite[Definition 8.4]{ahrens:2021}}]
\label[defi]{def:disp-invmodif}
Suppose that we have bicategories $\B_1$ and $\B_2$,
pseudofunctors $\F, \G : \B_1 \rightarrow \B_2$,
and pseudotransformations $\tau_1, \tau_2 : \F \twocell \G$.
In addition,
we assume that we have
displayed bicategories $\D_1$ and $\D_2$ over $\B_1$ and $\B_2$ respectively,
displayed pseudofunctors $\FF : \dmor{\D_1}{\D_2}{\F}$ and $\GG : \dmor{\D_1}{\D_2}{\G}$,
and displayed pseudotransformations $\tautau_1 : \dtwo{\FF}{\GG}{\tau_1}$ and $\tautau_2 : \dtwo{\FF}{\GG}{\tau_2}$.
A \conceptDef{displayed invertible modification}{Bicategories.DisplayedBicats.DispModification}{disp_invmodification} $\mm$
over an invertible modification $m : \modif{\tau_1}{\tau_2}$ consists of
displayed invertible 2-cells $\mm(\xx) : \dtwo{\tautau_1(\xx)}{\tautau_2(\xx)}{m(x)}$ for each $\xx$ over $x$.
This data is required to satisfy a naturality law,
which is given in the literature~\cite[Definition 8.4]{ahrens:2021}.
\end{defiC}

Next we put these notions together to obtain the notion of \emph{displayed biequivalence}.

\begin{defiC}[{\cite[Definition 8.9]{ahrens:2021}}]
\label[defi]{def:disp-biequiv}
Let $\B_1$ and $\B_2$ be bicategories,
and let $\D_1$ and $\D_2$ be displayed bicategories over $\B_1$ and $\B_2$ respectively.
Suppose that we have a biequivalence $(\F, \G, \eta, \varepsilon)$ where $\F : \B_1 \rightarrow \B_2$.
A \conceptDef{displayed biequivalence}{Bicategories.DisplayedBicats.DispBiequivalence}{disp_is_biequivalence_data} over $(\F, \G, \eta, \varepsilon)$ consists of
\begin{itemize}
  \item a displayed pseudofunctor $\FF : \dmor{\D_1}{\D_2}{\F}$;
  \item a displayed pseudofunctor $\GG : \dmor{\D_2}{\D_1}{\G}$;
  \item displayed pseudotransformations $\etaeta : \dtwo{\idd}{\FF \cdot \GG}{\eta}$ and $\etaeta^{-1} : \dtwo{\FF \cdot \GG}{\idd}{\eta^{-1}}$;
  \item displayed pseudotransformations $\epsilonepsilon : \dtwo{\GG \cdot \FF}{\idd}{\varepsilon}$ and $\epsilonepsilon^{-1} : \dtwo{\idd}{\GG \cdot \FF}{\varepsilon^{-1}}$;
  \item displayed invertible modifications witnessing that $\etaeta$ and $\etaeta^{-1}$ form an equivalence;
  \item displayed invertible modifications witnessing that $\epsilonepsilon$ and $\epsilonepsilon^{-1}$ form an equivalence.
\end{itemize}
\end{defiC}

The reason why we are interested in displayed biequivalences,
is because they give rise to biequivalences on the total bicategories.
We use this feature construct biequivalences in a modular way,
because we can extend a given biequivalence to displayed bicategories.
To do so, we use the following construction~\cite[Construction 8.11]{ahrens:2021}.

\begin{problem}
\label[problem]{prob:total-biequiv}
Given bicategories $\B_1$ and $\B_2$,
displayed bicategories $\D_1$ and $\D_2$ over $\B_1$ and $\B_2$ respectively,
and a displayed biequivalence $(\FF, \GG, \etaeta, \epsilonepsilon)$ over a biequivalence $(\F, \G, \eta, \varepsilon)$,
to construct a biequivalence $(\total{\FF}, \total{\GG}, \total{\etaeta}, \total{\epsilonepsilon})$ between the total bicategories $\total{\D_1}$ and $\total{\D_2}$.
\end{problem}

\begin{construction}{\coqdocurl{Bicategories.DisplayedBicats.DispBiequivalence}{total_is_biequivalence}}{prob:total-biequiv}
\label[construction]{constr:total-biequiv}
Each of the components is constructed by pairing.
For instance, the pseudofunctor $\total{\FF}$ sends a pair $(x, \xx)$ in $\total{\D_1}$ to $(\F(x), \FF(\xx))$ in $\total{\D_2}$.
The actions on 1-cells and 2-cells are similar.
\end{construction}

In the remainder, we only consider displayed bicategories that are locally preordered and groupoidal.
These conditions give us a more convenient way to construct displayed biequivalences.
In the construction, we use an analogue of \Cref{prop:pointwise-adj-equiv} for displayed pseudotransformations,
which says that it suffices to check that $\etaeta$ and $\epsilonepsilon$ are pointwise displayed adjoint equivalences.

\begin{problem}
\label[problem]{prop:disp-pointwise-adjequiv}
Suppose that we have univalent bicategories $\B_1$ and $\B_2$
and displayed bicategories $\D_1$ and $\D_2$ be displayed bicategories over $\B_1$ and $\B_2$ respectively,
such that $\D_1$ and $\D_2$ are both locally preordered and groupoidal.
Given a biequivalence $(\F, \G, \eta, \varepsilon)$ where $\F : \B_1 \rightarrow \B_2$
and
\begin{itemize}
  \item a displayed pseudofunctor $\FF : \dmor{\D_1}{\D_2}{\F}$;
  \item a displayed pseudofunctor $\GG : \dmor{\D_2}{\D_1}{\G}$;
  \item a displayed pseudotransformation $\etaeta : \dtwo{\idd}{\FF \cdot \GG}{\eta}$;
  \item a displayed pseudotransformation $\epsilonepsilon : \dtwo{\GG \cdot \FF}{\idd}{\varepsilon}$
\end{itemize}
such that $\etaeta$ and $\epsilonepsilon$ are pointwise displayed adjoint equivalences,
to construct a displayed biequivalence over $(\F, \G, \eta, \varepsilon)$.
\end{problem}

\begin{construction}{\coqdocurl{Bicategories.DisplayedBicats.DispPseudoNaturalAdjequiv}{make_disp_biequiv_pointwise_adjequiv}}{prop:disp-pointwise-adjequiv}
\label{constr:disp-biequiv-simpl}
To construct the desired displayed biequivalence,
it suffices to construct displayed pseudotransformations $\etaeta^{-1}$ and $\epsilonepsilon^{-1}$ and the necessary invertible modifications.
We consider a more general statement,
namely that if a displayed pseudotransformation $\tautau : \dtwo{\FF}{\GG}{\tau}$ is a pointwise displayed adjoint equivalence,
then we have a displayed pseudotransformation $\tautau^{-1} : \dtwo{\GG}{\FF}{\tau^{-1}}$
where $\tau : \F \twocell \G$ is a pseudonatural adjoint equivalence.
For this statement, we only give a sketch.
Since we assumed the involved bicategories to be univalent,
the bicategory of pseudofunctors is univalent as well~\cite[Theorem 9.8]{ahrens:2021},
and thus we can assume that $\tau$ is the identity by \Cref{prop:equivalence-induction}.
We define $\tautau^{-1}(x)$ to be the inverse of $\tautau(x)$
where we use that $\tautau(x)$ is a displayed adjoint equivalence.
In the same way, we define the necessary displayed invertible modifications.
\end{construction}

In \Cref{constr:disp-biequiv-simpl}, we used that the involved bicategories are univalent
and that the involved displayed bicategories are locally preordered and groupoidal.
These assumptions helped simplify the necessary constructions on a technical level,
because, for instance,
we do not have to verify of the coherences necessary for displayed pseudotransformations or modifications.
In addition, we can simplify the notions of displayed pseudofunctor and displayed pseudotransformation
if we assume the involved displayed bicategories are locally preordered and groupoidal.
Specifically, if $\D_2$ is both locally preordered and groupoidal,
then, to construct a displayed pseudofunctor $\FF : \dmor{\D_1}{\D_2}{\F}$, it suffices to give
\begin{itemize}
  \item a map sending $\xx : \dob{\D_1}{x}$ to $\FF(\xx) : \dob{\D_2}{\F(x)}$ for each $x : \B_1$;
  \item a map sending $\ff : \dmor{\xx}{\yy}{f}$ to $\FF(\ff) : \dmor{\FF(\xx)}{\FF(\yy)}{\F(f)}$
    for all objects $\xx$ over $x$ and $\yy$ over $y$, and all 1-cells $f : x \rightarrow y$;
  \item a map sending $\tautau : \dtwo{\ff}{\gg}{\tau}$ to $\FF(\tautau) : \dtwo{\FF(\ff)}{\FF(\gg)}{\F(\tau)}$
    for all 1-cell $\ff$ and $\gg$ over $f$ and $g$ respectively;
  \item a displayed 2-cell $\identitor{\FF}(\xx) : \dtwo{\idd[\FF(\xx)]}{\FF(\idd[\xx])}{\identitor{\F}(x)}$ for each object $\xx : \dob{\D_1}{x}$;
  \item a displayed 2-cell $\compositor{\FF}(\ff, \gg) : \dtwo{\FF(\ff) \cdot \FF(\gg)}{\FF(\ff \cdot \gg)}{\compositor{\F}(f, g)}$
    for all 1-cells $\ff : \dtwo{\xx}{\yy}{f}$ and $\gg : \dtwo{\yy}{\zz}{g}$.
\end{itemize}
For a displayed pseudotransformation $\tautau : \dtwo{\FF}{\GG}{\tau}$, it suffices to give
\begin{itemize}
  \item a displayed 1-cell $\tautau(\xx) : \dmor{\FF(\xx)}{\GG(\xx)}{\tau(x)}$ for each $\xx$ over $x : \B_1$;
  \item a displayed 2-cell $\tautau(\ff) : \dtwo{\tautau(x) \cdot \GG(\ff)}{\FF(\ff) \cdot \tautau(y)}{\tau(f)}$ for each $\ff : \dmor{\xx}{\yy}{f}$.
\end{itemize}  
If $\D_2$ further has trivial 2-cells, then the necessary displayed 2-cells exist automatically.

In the remainder, we use the following example of a displayed biequivalence.
This example says that if we have a biequivalence from $\D_1$ to $\D_2$ and from $\D_1'$ to $\D_2'$,
then we also have a biequivalence from $\D_1 \times \D_1'$ to $\D_2 \times \D_2'$.

\begin{exa}
\label[exa]{exa:prod-biequiv}
Suppose that we have displayed bicategories $\D_1$ and $\D_1'$ over $\B_1$,
and displayed bicategories $\D_2$ and $\D_2'$ over $\B_2$.
We assume that each of these displayed bicategories are locally preordered and groupoidal,
and we also assume to have a biequivalence $(\F, \G, \eta, \varepsilon)$ from $\B_1$ to $\B_2$.
Finally, suppose that we have displayed biequivalences $(\FF, \GG, \etaeta, \epsilonepsilon)$ from $\D_1$ to $\D_2$ over $(\F, \G, \eta, \varepsilon)$
and $(\FF', \GG', \etaeta', \epsilonepsilon')$ from $\D_1'$ to $\D_2'$ over $(\F, \G, \eta, \varepsilon)$.
Then we also have their \conceptDef{product displayed biequivalence}{Bicategories.DisplayedBicats.ProductDispBiequiv}{prod_disp_is_biequivalence_data} $(\FF \times \FF', \GG \times \GG', \etaeta \times \etaeta', \epsilonepsilon \times \epsilonepsilon')$,
which goes from $\D_1 \times \D_1'$ to $\D_2 \times \D_2'$
and lives over $(\F, \G, \eta, \varepsilon)$.
Both the pseudofunctors $\FF \times \FF'$ and $\GG \times \GG'$,
and the pseudotransformations $\etaeta \times \etaeta'$ and $\epsilonepsilon \times \epsilonepsilon'$
are defined by pairing.
\end{exa}

\subsection{Extending the Biequivalence to Locally Cartesian Closed Categories}
We finish this section by constructing the desired displayed biequivalence between $\LCCCD$ and $\DFLCompCatPiD$
over the biequivalence defined in \Cref{constr:biequiv}.
Recall that this biequivalence consists of pseudofunctors
$\FinLimToCompCat : \FinLim \rightarrow \DFLCompCat$ (\Cref{constr:finlim-to-compcat})
and $\CompCatToFinLim : \DFLCompCat \rightarrow \FinLim$ (\Cref{constr:compcat-to-finlim}),
and pseudotransformations $\FinLimCompCatUnit : \FinLimToCompCat \cdot \CompCatToFinLim \Rightarrow \id[\FinLim]$ (\Cref{constr:unit})
and $\FinLimCompCatCounit : \id[\DFLCompCat] \Rightarrow \CompCatToFinLim \cdot \FinLimToCompCat$ (\Cref{constr:counit}).
To construct the desired displayed biequivalence, we need to construct
\begin{itemize}
  \item a displayed pseudofunctor $\FinLimToCompCatPi$ from $\LCCCD$ to $\DFLCompCatPiD$ over $\FinLimToCompCat$ (\Cref{constr:disp-psfunctor-to-compcat});
  \item a displayed pseudofunctor $\CompCatToFinLimPi$ from $\DFLCompCatPiD$ to $\LCCCD$ over $\CompCatToFinLim$ (\Cref{constr:disp-psfunctor-to-cat});
  \item a displayed pseudotransformation $\FinLimCompCatUnitPi$ from $\FinLimToCompCatPi \cdot \CompCatToFinLimPi$ to $\idd$ over $\FinLimCompCatUnit$ (\Cref{prop:disp-pstrans-unit-pi});
  \item a displayed pseudotransformation $\FinLimCompCatCounitPi$ from $\idd$ to $\CompCatToFinLimPi \cdot \FinLimToCompCatPi$ over $\FinLimCompCatCounit$ (\Cref{prop:disp-pstrans-counit-pi})
\end{itemize}
and we need to verify that $\FinLimCompCatUnitPi$ and $\FinLimCompCatCounitPi$ are pointwise adjoint equivalences.
We start with $\FinLimToCompCatPi$.

\begin{problem}
\label[problem]{prob:disp-psfunctor-to-compcat}
To construct a displayed pseudofunctor $\FinLimToCompCatPi : \LCCCD \rightarrow \DFLCompCatPiD$ over $\FinLimToCompCat$.
\end{problem}

\begin{construction}{\coqdocurl{Bicategories.ComprehensionCat.Biequivalence.PiTypesBiequiv}{finlim_biequiv_dfl_comp_cat_disp_psfunctor_pi_types}}{prob:disp-psfunctor-to-compcat}
\label{constr:disp-psfunctor-to-compcat}
Let $\C$ be a univalent locally Cartesian closed category.
We need to verify that $\id : \ArrD{\C} \rightarrow \ArrD{\C}$ supports $\prod$-types.
The necessary right adjoints exist, because $\C$ is locally Cartesian closed.
In addition, the Beck-Chevalley condition for $\prod$-types follows from the Beck-Chevalley condition of $\sum$-types~\cite{seely:1983}.
We must also show that for every functor $\F : \C_1 \rightarrow \C_2$ that preserves exponentials,
the displayed functor $\ArrD{\F}$ preserves $\prod$-types.
This follows by definition.
\end{construction}

\begin{problem}
\label[problem]{prob:disp-psfunctor-to-cat}
To construct a displayed pseudofunctor $\CompCatToFinLimPi : \DFLCompCatPiD \rightarrow \LCCCD$ over $\CompCatToFinLim$.
\end{problem}

\begin{construction}{\coqdocurl{Bicategories.ComprehensionCat.Biequivalence.PiTypesBiequiv}{dfl_comp_cat_to_finlim_disp_psfunctor_pi_types}}{prob:disp-psfunctor-to-cat}
\label{constr:disp-psfunctor-to-cat}
Let $\compfunctor : \D \rightarrow \ArrD{\C}$ be a DFL comprehension category that supports $\prod$-types.
We need to show that the category $\C$ is locally Cartesian closed,
so we need to prove that the pullback functor $\functorfiber{s} : \slice{\C}{\Gamma} \rightarrow \slice{\C}{\Delta}$ has a right adjoint for every $s : \Delta \rightarrow \Gamma$.
By \Cref{prop:comprehension-eso} we can assume that $s$ is isomorphic to a map $\CtxPr[A]$ for some type $A : \dob{\D}{\Gamma}$,
and thus $\functorfiber{s}$ has a right adjoint since $\compfunctor$ supports $\prod$-types.
\end{construction}

\begin{propL}[\coqdocurl{Bicategories.ComprehensionCat.Biequivalence.PiTypesBiequiv}{finlim_dfl_comp_cat_unit_pi_types}]
\label[propL]{prop:disp-pstrans-unit-pi}
We have a displayed pseudotransformation $\FinLimCompCatUnitPi : \FinLimToCompCatPi \cdot \CompCatToFinLimPi \Rightarrow \idd$ over $\FinLimCompCatUnit$.
\end{propL}

\begin{proof}
Recall that $\FinLimCompCatUnit$ was defined to be the identity functor pointwise.
The desired pseudotransformation exists, because the identity functor preserves exponentials.
\end{proof}

\begin{propL}[\coqdocurl{Bicategories.ComprehensionCat.Biequivalence.PiTypesBiequiv}{finlim_dfl_comp_cat_counit_pi_types}]
\label[propL]{prop:disp-pstrans-counit-pi}
We have a displayed pseudotransformation $\FinLimCompCatCounitPi : \idd \Rightarrow \CompCatToFinLimPi \cdot \FinLimToCompCatPi$ over $\FinLimCompCatCounit$.
\end{propL}

\begin{proof}
To construct the desired pseudotransformation,
we need to show that $\FinLimCompCatCounit$ preserves $\prod$-types pointwise.
Recall that $\FinLimCompCatCounit$ is a pointwise adjoint equivalence by \Cref{prop:counit-pointwise-adjequiv}.
Using equivalence induction~(\Cref{prop:equivalence-induction}),
we can show that every adjoint equivalence of DFL comprehension categories preserves $\prod$-types,
because the identity preserves $\prod$-types.
From this, we conclude that the desired pseudotransformation exists.
\end{proof}

\begin{problem}
\label[problem]{prob:lccc-biequiv}
The bicategories $\LCCC$ and $\DFLCompCatPi$ are biequivalent.
\end{problem}

\begin{construction}{\coqdocurl{Bicategories.ComprehensionCat.Biequivalence.PiTypesBiequiv}{internal_language_lccc}}{prob:lccc-biequiv}
\label{constr:lccc-biequiv}
The desired pseudofunctors and pseudotransformations
are constructed in \Cref{constr:disp-psfunctor-to-compcat,constr:disp-psfunctor-to-cat}
and in \Cref{prop:disp-pstrans-unit-pi,prop:disp-pstrans-counit-pi} respectively.
Both $\FinLimCompCatUnitPi$ and $\FinLimCompCatCounitPi$ are pointwise adjoint equivalences by construction.
\end{construction}

\section{The Internal Language of Toposes}
\label{sec:topos}
Up to now we have considered the internal language of univalent categories with finite limits
and univalent locally Cartesian closed categories.
In this section we extend these internal language theorems to various classes of toposes,
among which are pretoposes, $\prod$-pretoposes, and elementary toposes.
Note that each of these classes is either an extension of categories with finite limits or of locally Cartesian closed categories,
so,
compared to those,
their internal languages comes with additional type formers.
Compared to categories with finite limits,
pretoposes extend the internal language with the empty type, disjoint sum types, and quotients,
whereas elementary toposes extend the internal language of locally Cartesian closed categories with a subobject classifier type.

Each of the aforementioned type formers is treated similarly in the semantics.
For instance, a full comprehension category $\compfunctor : \D \rightarrow \ArrD{\C}$ supports binary sum types if each fiber $\fiber{\D}{\Gamma}$ has binary coproducts,
and if the functor $\functorfiber{s} : \fiber{\D}{\Delta} \rightarrow \fiber{\D}{\Gamma}$ preserves binary products for each $s : \Gamma \rightarrow \Delta$.
The type of natural numbers is treated the same: a comprehension category $\compfunctor : \D \rightarrow \ArrD{\C}$ supports a natural numbers type
if each fiber $\fiber{\D}{\Gamma}$ has a natural numbers object,
and if $\functorfiber{s} : \fiber{\D}{\Delta} \rightarrow \fiber{\D}{\Gamma}$  preserves natural numbers objects for each $s : \Gamma \rightarrow \Delta$.
We can say the same about empty types, subobject classifier types, and quotient types.
Hence, a full comprehension category supports such a certain type former if each fiber has some categorical structure,
and if the substitution functors preserve that structure.

This idea was used by Maietti to prove soundness and completeness theorems for various dependent type theories~\cite[Theorems 5.8 and 5.28]{maietti:2005}.
More specifically, Maietti defines the notion of \textbf{local property}~\cite[Proposition 2.13]{maietti:2005},
which encapsulates the pattern we mentioned before,
and interprets numerous type formers, like quotients and disjoint sum types, with suitable local properties.
Since a wide variety of categorical structures can be captured using local properties,
this gives a powerful method to study the semantics of various flavors of type theory.

Our goal in this section is to extend the biequivalence defined in \Cref{constr:biequiv} along local properties.
To see what that entails to, let us consider an example.
Recall that a category $\C$ with finite limits and finite coproducts is called \textbf{extensive}
if coproducts are disjoint and stable under pullback.
Extensiveness gives an instance of a local property, since this property is closed under slicing.
Using this property we can restrict $\FinLim$ to the bicategory of univalent extensive categories,
and $\DFLCompCat$ to the bicategory DFL comprehension categories for which each fiber is extensive
and each substitution functor preserves binary coproducts.
We show in this section that these two bicategories are biequivalent,
and we prove a general statement using arbitrary local properties.
To do so, we use the same methods as in \Cref{sec:lccc}.

We start this section by discussing local properties.
More specifically, we define this notion
and we extend the biequivalence from \Cref{constr:biequiv} along local properties.
After that we give numerous examples of local properties,
and we instantiate the extended biequivalence to obtain internal language theorems for various classes of toposes.

\subsection{Local Properties}
Now we define local properties.
The idea is that a local property is a property of categories that is closed under slicing.

\begin{defi}
\label[defi]{def:local-property}
A \conceptDef{local property}{Bicategories.ComprehensionCat.LocalProperty.LocalProperties}{local_property} $\LocalProp$ is given by
\begin{itemize}
  \item a proposition $\LocalPropCat{\LocalProp}{\C}$ for each univalent category $\C$ with finite limits;
  \item a proposition $\LocalPropFunctor{\LocalProp}{\F}$ for each functor $\F : \C_1 \rightarrow \C_2$
    such that $\F$ preserves finite limits
    and such that $\LocalPropCat{\LocalProp}{\C_1}$ and $\LocalPropCat{\LocalProp}{\C_2}$ hold.
\end{itemize}
We require the following axioms on this data.
\begin{itemize}
  \item For each univalent category $\C$ with finite limits we have $\LocalPropFunctor{\LocalProp}{\id[\C]}$ if $\LocalPropCat{\LocalProp}{\C}$ holds.
  \item Given functors $\F : \C_1 \rightarrow \C_2$ and $\G : \C_2 \rightarrow \C_3$ that preserve finite limits,
    we have $\LocalPropFunctor{\LocalProp}{\F \cdot \G}$ if both $\LocalPropFunctor{\LocalProp}{\F}$ and $\LocalPropFunctor{\LocalProp}{\G}$ hold.
  \item If $\C$ is a univalent category with finite limits such that $\LocalPropCat{\LocalProp}{\C}$, we have $\LocalPropCat{\LocalProp}{\slice{\C}{x}}$ for each object $x : \C$.
  \item If $\C$ is a univalent category with finite limits such that $\LocalPropCat{\LocalProp}{\C}$, we have $\LocalPropFunctor{\LocalProp}{f^*}$ for each morphism $f : x \rightarrow y$.
  \item If $\F : \C_1 \rightarrow \C_2$ is a functor preserving finite limits such that $\LocalPropFunctor{\LocalProp}{\F}$,
    then we also have $\LocalPropFunctor{\LocalProp}{\slice{\F}{x}}$ for each $x : \C_1$.
\end{itemize}
\end{defi}

There are several remarks to make on \Cref{def:local-property}.
First, we require $\LocalPropCat{\LocalProp}{\C}$ and $\LocalPropFunctor{\LocalProp}{\F}$ to be propositions.
This requirement is satisfied for all our examples of interest, because we work with univalent categories.
For instance, we have a local property that expresses that a category has a subobject classifier.
The type of subobject classifier in a univalent category is a proposition, because subobject classifiers are unique up to isomorphism,
and thus unique if we assume our category to be univalent.
If our focus would have been on categories that are not necessarily univalent,
then we would need to drop the requirement that $\LocalPropCat{\LocalProp}{\C}$ is a proposition,
because, for instance, chosen subobject classifiers are not guaranteed to be unique.
In addition, one would need to add the requirements to \Cref{def:local-property} saying that $\LocalPropCat{\LocalProp}{\C}$ and $\LocalPropFunctor{\LocalProp}{\F}$
are closed under adjoint equivalence and natural isomorphism respectively
to get a suitable notion of local property.

Second,
since we are working with univalent categories,
a local property is automatically invariant under equivalence.
More specifically,
if we have an adjoint equivalence $\C_1 \simeq \C_2$ between univalent categories,
then $\LocalPropCat{\LocalProp}{\C_1}$ holds if and only if $\LocalPropCat{\LocalProp}{\C_2}$ holds.
We can say the same for functors.
More specifically, suppose that we have a diagram as follows.
\[
\begin{tikzcd}
  {\C_1} & {\C_1'} \\
  {\C_2} & {\C_2'}
  \arrow[""{name=0, anchor=center, inner sep=0}, "\simeq", from=1-1, to=1-2]
  \arrow["\F"', from=1-1, to=2-1]
  \arrow["\G", from=1-2, to=2-2]
  \arrow[""{name=1, anchor=center, inner sep=0}, "\simeq"', from=2-1, to=2-2]
  \arrow["\cong"{description}, shorten <=4pt, shorten >=4pt, Rightarrow, from=0, to=1]
\end{tikzcd}
\]
Then $\LocalPropFunctor{\LocalProp}{\F}$ holds if and only if $\LocalPropFunctor{\LocalProp}{\G}$ holds.
Both these statements follow from \Cref{prop:equivalence-induction}.
If we did not assume univalence,
then we would need to add both these requirements to \Cref{def:local-property},
and we would need to verify them for each instance of a local property.

Finally, our definition is slightly different compared to the one by Maietti.
We require the functors $\slice{\F}{x} : \slice{\C_1}{x} \rightarrow \slice{\C_2}{\F(x)}$ to be structure preserving whenever $\F$ is,
while Maietti does not add this requirement.
The reason for this difference is that our focus is on biequivalences,
and thus we need to take morphisms into account.

Next we define the necessary displayed bicategories.

\begin{defi}
\label[defi]{def:local-property-disp-bicat}
Let $\LocalProp$ be a local property.
We define the displayed bicategory \conceptDef{$\LPropD{\LocalProp}$}{Bicategories.ComprehensionCat.LocalProperty.CatWithProp}{bicat_of_univ_cat_with_cat_property} over $\FinLim$ to be $\subbicat{\LocalPropCatT{\LocalProp}}{\LocalPropFunctorT{\LocalProp}}$.

A DFL comprehension category $\compfunctor : \D \rightarrow \ArrD{\C}$ satisfies $\LocalProp$
if we have $\LocalPropCat{\LocalProp}{\fiber{\D}{\Gamma}}$ for each $\Gamma : \C$
and we have $\LocalPropFunctor{\LocalProp}{\functorfiber{s}}$ for each $s : \Gamma \rightarrow \Delta$.
A 1-cell $(\F, \FF, \F_\chi)$ preserves $\LocalProp$
if we have $\LocalPropFunctor{\LocalProp}{\fiberfunctor{\FF}{\Gamma}}$ for each $\Gamma$.
Now we define the displayed bicategory \conceptDef{$\DFLCompCatPropD{\LocalProp}$}{Bicategories.ComprehensionCat.LocalProperty.DFLCompCatWithProp}{disp_bicat_of_cat_property_dfl_full_comp_cat} over $\DFLCompCat$ to be $\subbicat{\Qred}{\Qred'}$.
Here $\Qred$ says that a DFL comprehension category satisfies $\LocalProp$
and $\Qred'$ says that a 1-cell preserves $\LocalProp$.
\end{defi}

An object over a univalent category $\C$ with finite limits in $\LPropD{\LocalProp}$ is thus a proof that $\LocalPropCat{\LocalProp}{\C}$ holds,
and a morphism over a functor $\F$ that preserves finite limits is a proof of $\LocalPropFunctor{\LocalProp}{\F}$.
Objects in $\DFLCompCatPropD{\LocalProp}$ are DFL comprehension categories that satisfy $\LocalProp$ fiberwise,
and morphisms over $(\F, \FF, \F_\chi)$ are proofs that for each $\Gamma$ we have $\LocalPropFunctor{\LocalProp}{\fiberfunctor{\FF}{\Gamma}}$.
Note that both $\LPropD{\LocalProp}$ and $\DFLCompCatPropD{\LocalProp}$ are univalent.
This is because we define them as subbicategories,
and because local properties are valued in propositions.

Our goal is to construct a displayed biequivalence between $\LPropD{\LocalProp}$ and $\DFLCompCatPropD{\LocalProp}$
over the biequivalence constructed in \Cref{constr:biequiv}.

\begin{problem}
\label[problem]{prob:local-property-biequiv}
To construct a displayed biequivalence between $\LPropD{\LocalProp}$ and $\DFLCompCatPropD{\LocalProp}$
over the biequivalence $(\FinLimToCompCat, \CompCatToFinLim, \FinLimCompCatUnit, \FinLimCompCatCounit)$
for each local property $\LocalProp$.
\end{problem}

\begin{construction}{\coqdocurl{Bicategories.ComprehensionCat.Biequivalence.LocalProperty}{finlim_biequiv_dfl_comp_cat_psfunctor_local_property}}{prob:local-property-biequiv}
\label{constr:local-property-biequiv}
We first construct a displayed pseudofunctor
from $\LPropD{\LocalProp}$ to $\DFLCompCatPropD{\LocalProp}$
over $\FinLimToCompCat$.
To do so, we must show that the comprehension category $\id : \ArrD{\C} \rightarrow \ArrD{\C}$ satisfies $\LocalProp$.
This is so, because local properties are closed under slicing.

Next we construct a displayed pseudofunctor
from $\DFLCompCatPropD{\LocalProp}$ to $\LPropD{\LocalProp}$
over $\CompCatToFinLim$.
Concretely, we show that whenever we have some DFL comprehension category $\compfunctor : \D \rightarrow \ArrD{\C}$ that satisfies $\LocalProp$,
then we have $\LocalPropCat{\LocalProp}{\C}$.
This follows from the fact that the property $\LocalPropCatT{\LocalProp}$ is preserved under equivalence,
and because we have $\LocalPropCat{\LocalProp}{\fiber{\D}{\ECtx}}$.
Analogously,
we show for each 1-cell $(\F, \FF, \F_\chi)$ of DFL comprehension categories
we have that $\LocalPropFunctor{\LocalProp}{\F}$.

Finally, we must construct displayed pseudotransformations over $\FinLimCompCatUnit$ and $\FinLimCompCatCounit$.
For $\FinLimCompCatUnit$,
it follows from the fact that we have $\LocalPropFunctor{\LocalProp}{\id}$.
For $\FinLimCompCatCounit$,
we use that every adjoint equivalence satisfies $\LocalPropFunctorT{\LocalProp}$,
which follows from \Cref{prop:equivalence-induction}
and the fact that we have $\LocalPropFunctor{\LocalProp}{\id}$.
\end{construction}

\subsection{Extending the Biequivalence to Toposes}
Now our goal is to instantiate \Cref{constr:local-property-biequiv} to various classes of toposes,
and to do so, we first define the local properties necessary to define these classes of toposes.
We start with the conjuncton on local properties.

\begin{exa}
\label[exa]{exa:local-prop-comb}
Given local properties $\LocalProp$ and $\LocalProp'$,
we define their conjunction \conceptDef{$\LocalAnd{\LocalProp}{\LocalProp'}$}{Bicategories.ComprehensionCat.LocalProperty.Examples}{local_property_conj} as follows.
\begin{itemize}
  \item The type $\LocalPropCat{(\LocalAnd{\LocalProp}{\LocalProp'})}{\C}$ is defined to be $\LocalPropCat{\LocalProp}{\C} \land \LocalPropCat{\LocalProp'}{\C}$.
  \item A functor $\F$ satisfies $\LocalPropFunctor{(\LocalAnd{\LocalProp}{\LocalProp'})}{\F}$
    if we have both $\LocalPropFunctor{\LocalProp}{\F}$ and $\LocalPropFunctor{\LocalProp'}{\F}$.
\end{itemize}
\end{exa}

Next we look at several type formers.
The first one is given by finite coproducts.
To interpret the empty type, we use strict initial objects,
and we say that an initial object is strict if every morphism into it is an isomorphism.
Binary coproduct types can be interpreted in a category if that category has binary coproducts
that are stable under pullback.
Another property of interest is \textbf{extensiveness}~\cite[Definition 2.1]{carboni:1993},
which allows us to interpret finite disjoint coproducts.
Recall that a category is \textbf{extensive} if it has finite coproducts
that are stable under pullback
and that are disjoint.
We say that a coproduct is disjoint if the pullback of all coproduct inclusions is an initial object.

\begin{exa}
\label[exa]{exa:fin-coprod}
We define the local property \conceptDef{$\StrictInitial$}{Bicategories.ComprehensionCat.LocalProperty.Examples}{strict_initial_local_property}
such that a category $\C$ satisfies $\LocalPropCatT{\StrictInitial}$ if $\C$ has a strict initial object,
and a functor $\F$ satisfies $\LocalPropFunctorT{\StrictInitial}$ if $\F$ preserves initial objects.

Next we define the local property \conceptDef{$\StableCoproduct$}{Bicategories.ComprehensionCat.LocalProperty.Examples}{stable_bincoproducts_local_property}.
A category $\C$ satisfies $\LocalPropCatT{\StableCoproduct}$ if $\C$ has binary coproducts that are stable under pullback,
and a functor $\F$ satisfies $\LocalPropFunctorT{\StableCoproduct}$ if $\F$ preserves binary coproducts.

Finally, we define the local property \conceptDef{$\Extensive$}{Bicategories.ComprehensionCat.LocalProperty.Examples}{lextensive_local_property}.
Categories satisfy $\LocalPropCatT{\Extensive}$ if they are extensive,
and functors satisfy $\LocalPropFunctorT{\Extensive}$ if they preserve finite coproducts.
\end{exa}

The next type former of interest is given by quotient types.
We can interpret these in \textbf{exact categories}~\cite{barr:1971}.
Recall that a morphism $f : x \rightarrow y$ in a category $\C$ is a \textbf{regular epimorphism}~\cite[Definition 4.3.1]{borceux:1994a}
if there exists $w : \C$ and morphisms $g_1, g_2 : w \rightarrow x$ such that $f$ is the coequalizer of $g_1$ and $g_2$.
A category with finite limits if said to be \textbf{regular}~\cite[Definition 2.1.1]{borceux:1994a}
if regular epimorphisms are stable under pullback and
if for every morphism $f : x \rightarrow y$ the kernel pair, which is the pullback of $f$ along $f$, has a coequalizer.
An internal relation on $x$ is given by a monomorphism $m : r \rightarrow x \times x$,
and each internal relation gives rise to a relation $\relation{r}{w}$ on the type $w \rightarrow x$ of morphisms for each $w$.
This relation relates $f, g : w \rightarrow x$ if there is $k : w \rightarrow r$ making the following diagram commute.
\[
\begin{tikzcd}
  & w \\
  & r \\
  x & {x \times x} & x
  \arrow["k"{description}, dashed, from=1-2, to=2-2]
  \arrow["f"', from=1-2, to=3-1]
  \arrow["g", from=1-2, to=3-3]
  \arrow["m"{description}, from=2-2, to=3-2]
  \arrow["{\pi_1}", from=3-2, to=3-1]
  \arrow["{\pi_2}"', from=3-2, to=3-3]
\end{tikzcd}
\]
An internal equivalence relation $m : r \rightarrow x \times x$ is an internal relation for which each relation $\relation{r}{w}$ is an equivalence relation.
We say that a regular category is \textbf{exact}~\cite[Definition 2.6.1]{borceux:1994a} if
for every internal equivalence relation $m : r \rightarrow x \times x$
we have a morphism $f : x \rightarrow y$ such that $m$ is the kernel pair of $f$.

\begin{exa}
\label[exa]{exa:exact}
We define the local property \conceptDef{$\Regular$}{Bicategories.ComprehensionCat.LocalProperty.Examples}{regular_local_property} such that
categories satisfy $\LocalPropCatT{\Regular}$ if they are regular
and functors satisfy $\LocalPropFunctorT{\Regular}$ if they preserve regular epimorphisms.

The local property \conceptDef{$\Exact$}{Bicategories.ComprehensionCat.LocalProperty.Examples}{exact_local_property} is defined analogously.
Categories satisfy $\LocalPropCatT{\Exact}$ if they are exact
and functors satisfy $\LocalPropFunctorT{\Exact}$ if they preserve regular epimorphisms.
\end{exa}

Next we look at the local property for \textbf{subobject classifiers},
which represent an impredicative universe of all propositions.
Recall that a subobject classifier is given by a morphism that classifies all monomorphisms~\cite{johnstone:2002}.
Specifically,
a subobject classifier in a category $\C$ with a terminal object $\ECtx$ is given by an object $\Omega$ with a morphism $\top : \ECtx \rightarrow \Omega$
such that for every monomorphism $m : x \rightarrow y$ there is a unique $\chi_m : y \rightarrow \Omega$
making the following square a pullback.
\[
\begin{tikzcd}
  x & \ECtx \\
  y & \Omega
  \arrow[from=1-1, to=1-2]
  \arrow["m"', from=1-1, to=2-1]
  \arrow["\lrcorner"{anchor=center, pos=0.125}, draw=none, from=1-1, to=2-2]
  \arrow["\top", from=1-2, to=2-2]
  \arrow["{\chi_m}"', from=2-1, to=2-2]
\end{tikzcd}
\]

\begin{exa}
\label[exa]{exa:subobj-class}
We define the local property \conceptDef{$\SubobjClass$}{Bicategories.ComprehensionCat.LocalProperty.Examples}{subobject_classifier_local_property} such that
$\LocalPropCat{\SubobjClass}{\C}$ is the type of subobject classifiers in some category $\C$
and $\F$ satisfies $\LocalPropFunctor{\SubobjClass}{\F}$ if $\F$ preserves subobject classifiers.
\end{exa}

To see how subobject classifiers give rise to a universe of propositions,
we show that propositions in a DFL comprehension category $\compfunctor : \D \rightarrow \ArrD{\C}$ correspond to monomorphisms in $\C$.
This observation was already made by Maietti~\cite{maietti:2005},
who showed that internal propositions in the syntactic category correspond to monomorphisms.
Recall that a type $A$ is said to be a proposition if for all $x, y : A$ we have $x = y$.
We translate that notion to comprehension categories as follows.

\begin{defi}
\label[defi]{def:hprop-compcat}
Let $\compfunctor : \D \rightarrow \ArrD{\C}$ be a DFL comprehension category,
and let $A : \dob{\D}{\Gamma}$ be a type in context $\Gamma$.
Note that we have two terms of type $\substTy{\CtxPr}{\substTy{\CtxPr}{A}}$ in context $\CtxExt{\CtxExt{\Gamma}{A}}{\substTy{\CtxPr}{A}}$,
namely $l \defeq \var$ and $r \defeq \substTm{\CtxPr}{\var}$.
We say that $A$ is an \conceptDef{internal proposition}{Bicategories.ComprehensionCat.HPropMono}{is_hprop_ty}
if we have an term $t : \Tms{\CtxExt{\CtxExt{\Gamma}{A}}{\substTy{\CtxPr}{A}}}{\ExtId{l}{r}}$.
\end{defi}

The notion of internal proposition expresses that we have a term $\Gamma , x : A , y : A \vdash \ExtId{x}{y}$.
To show that internal propositions correspond to monomorphisms,
we first show that internal propositions can be characterized via their display maps.
Specifically, a type $A$ is an internal proposition if and only if $\CtxPr[A]$ is a monomorphism.
Throughout that proof, we use the concrete definitions of the underlying morphisms of $l$ and $r$ in \Cref{def:hprop-compcat}.
Recall that we defined $l$ as the following morphism.
\[
\begin{tikzcd}
  {\CtxExt{\CtxExt{\Gamma}{A}}{\substTy{\CtxPr}{A}}} \\
  & {\CtxExt{\CtxExt{\CtxExt{\Gamma}{A}}{\substTy{\CtxPr}{A}}}{\substTy{\CtxPr}{\substTy{\CtxPr}{A}}}} & {\CtxExt{\CtxExt{\Gamma}{A}}{\substTy{\CtxPr}{A}}} \\
  & {\CtxExt{\CtxExt{\Gamma}{A}}{\substTy{\CtxPr}{A}}} & {\CtxExt{\Gamma}{A}}
  \arrow["l"{description}, dashed, from=1-1, to=2-2]
  \arrow["\id", bend left=12, from=1-1, to=2-3]
  \arrow["\id"', bend right=20, from=1-1, to=3-2]
  \arrow["\compfunctor(\substTyMor{\CtxPr})", from=2-2, to=2-3]
  \arrow["{\CtxPr}"', from=2-2, to=3-2]
  \arrow["{\CtxPr}", from=2-3, to=3-3]
  \arrow["{\CtxPr}"', from=3-2, to=3-3]
  \arrow["\lrcorner"{anchor=center, pos=0.125}, draw=none, from=2-2, to=3-3]
\end{tikzcd}
\]
The term $r$ is defined as follows
\[
\begin{tikzcd}
  {\CtxExt{\CtxExt{\Gamma}{A}}{\substTy{\CtxPr}{A}}} & {\CtxExt{\Gamma}{A}} \\
  & {\CtxExt{\CtxExt{\CtxExt{\Gamma}{A}}{\substTy{\CtxPr}{A}}}{\substTy{\CtxPr}{\substTy{\CtxPr}{A}}}} & {\CtxExt{\CtxExt{\Gamma}{A}}{\substTy{\CtxPr}{A}}} \\
  & {\CtxExt{\CtxExt{\Gamma}{A}}{\substTy{\CtxPr}{A}}} & {\CtxExt{\Gamma}{A}}
  \arrow["{\CtxPr}", from=1-1, to=1-2]
  \arrow["{r}"{description}, dashed, from=1-1, to=2-2]
  \arrow["\id"', bend right=30, from=1-1, to=3-2]
  \arrow["r'", bend left=10, from=1-2, to=2-3]
  \arrow["\compfunctor(\substTyMor{\CtxPr})", from=2-2, to=2-3]
  \arrow["{\CtxPr}"', from=2-2, to=3-2]
  \arrow["{\CtxPr}", from=2-3, to=3-3]
  \arrow["\CtxPr"', from=3-2, to=3-3]
  \arrow["\lrcorner"{anchor=center, pos=0.125}, draw=none, from=2-2, to=3-3]
\end{tikzcd}
\]
where we define $r'$ to be the following morphism.
\[
\begin{tikzcd}
  {\CtxExt{\Gamma}{A}} \\
  & {\CtxExt{\CtxExt{\Gamma}{A}}{\substTy{\CtxPr}{A}}} & {\CtxExt{\Gamma}{A}} \\
  & {\CtxExt{\Gamma}{A}} & \Gamma
  \arrow["r'"{description}, dashed, from=1-1, to=2-2]
  \arrow["\id", bend left=20, from=1-1, to=2-3]
  \arrow["\id"', bend right=20, from=1-1, to=3-2]
  \arrow["\compfunctor(\substTyMor{\CtxPr})", from=2-2, to=2-3]
  \arrow["{\CtxPr}"', from=2-2, to=3-2]
  \arrow["{\CtxPr}", from=2-3, to=3-3]
  \arrow["{\CtxPr}"', from=3-2, to=3-3]
  \arrow["\lrcorner"{anchor=center, pos=0.125}, draw=none, from=2-2, to=3-3]
\end{tikzcd}
\]

\begin{propL}[\coqdocurl{Bicategories.ComprehensionCat.HPropMono}{is_hprop_ty_weq_mono_ty}]
\label[propL]{prop:hprop-mono}
Let $\compfunctor : \D \rightarrow \ArrD{\C}$ be a DFL comprehension category.
A type $A$ is an internal proposition if and only if $\CtxPr[A]$ is a monomorphism.
\end{propL}

\begin{proof}
First, we assume that $A$ is an internal proposition.
Suppose that we have morphisms $s_1, s_2 : \Delta \rightarrow \CtxExt{\Gamma}{A}$ such that $s_1 \cdot \CtxPr = s_2 \cdot \CtxPr$.
We need to show that $s_1 = s_2$.
Since $\compfunctor$ preserves Cartesian morphisms,
we have the morphism $f$, depicted below, by the universal mapping property of the pullback.
\[
\begin{tikzcd}
  \Delta \\
  & {\CtxExt{\CtxExt{\Gamma}{A}}{\substTy{\CtxPr}{A}}} & {\CtxExt{\Gamma}{A}} \\
  & {\CtxExt{\Gamma}{A}} & \Gamma
  \arrow["{s_2}", from=1-1, to=2-3,bend left=20]
  \arrow["{s_1}"', from=1-1, to=3-2,bend right=20]
  \arrow["{\compfunctor(\substTyMor{\CtxPr})}", from=2-2, to=2-3]
  \arrow["f"{description}, dashed, from=1-1, to=2-2]
  \arrow["\CtxPr"', from=2-2, to=3-2]
  \arrow["\CtxPr", from=2-3, to=3-3]
  \arrow["\CtxPr"', from=3-2, to=3-3]
\end{tikzcd}
\]
In addition,
since $A$ is an internal proposition,
we have that $l = r$ by the reflection rule.
Hence, the following compositions of morphisms are equal.
\[
\begin{tikzcd}
  \Delta & {\CtxExt{\CtxExt{\Gamma}{A}}{\substTy{\CtxPr}{A}}} & {\CtxExt{\CtxExt{\CtxExt{\Gamma}{A}}{\substTy{\CtxPr}{A}}}{\substTy{\CtxPr}{\substTy{\CtxPr}{A}}}} & {\CtxExt{\CtxExt{\Gamma}{A}}{\substTy{\CtxPr}{A}}} & {\CtxExt{\Gamma}{A}}
  \arrow["f", from=1-1, to=1-2]
  \arrow["l", shift left=2, from=1-2, to=1-3]
  \arrow["r"', shift right=2, from=1-2, to=1-3]
  \arrow["{\compfunctor(\substTyMor{\CtxPr})}", from=1-3, to=1-4]
  \arrow["{\compfunctor(\substTyMor{\CtxPr})}", from=1-4, to=1-5]
\end{tikzcd}
\]
Since $f \cdot l \cdot \compfunctor(\substTyMor{\CtxPr}) \cdot \compfunctor(\substTyMor{\CtxPr}) = s_1$
and $f \cdot r \cdot \compfunctor(\substTyMor{\CtxPr}) \cdot \compfunctor(\substTyMor{\CtxPr}) = s_2$,
it follows that $s_1 = s_2$.

Next we assume that $\CtxPr[A]$ is a monomorphism,
and our goal is to construct a term $p : \Tms{\CtxExt{\CtxExt{\Gamma}{A}}{\substTy{\CtxPr}{A}}}{\ExtId{l}{r}}$.
It suffices to show that $l = r$,
and by the universal mapping property of the pullback,
we need to show that $l \cdot \compfunctor(\substTyMor{\CtxPr}) = r \cdot \compfunctor(\substTyMor{\CtxPr})$
and that $l \cdot \CtxPr = r \cdot \CtxPr$.
Note that we have $l \cdot \CtxPr = r \cdot \CtxPr$, because both compositions are the identity.
To prove that $l \cdot \compfunctor(\substTyMor{\CtxPr}) = r \cdot \compfunctor(\substTyMor{\CtxPr})$,
we use that $l \cdot \compfunctor(\substTyMor{\CtxPr}) = \id$
and $r \cdot \compfunctor(\substTyMor{\CtxPr}) = \CtxPr \cdot r'$,
so we need to show that $\CtxPr \cdot r' = \id$.
Again we use the universal mapping property of the pullback,
and it suffices to show that $\CtxPr \cdot r' \cdot \compfunctor(\substTyMor{\CtxPr}) = \compfunctor(\substTyMor{\CtxPr})$
and $\CtxPr \cdot r' \cdot \CtxPr = \CtxPr$.
The second equality follows, because $r' \cdot \CtxPr$ is an identity.
Since $r' \cdot \compfunctor(\substTyMor{\CtxPr})$ is an identity as well
and since we assumed $\CtxPr[A]$ to be a monomorphism,
it is sufficient to show that the following compositions are equal.
\[
\begin{tikzcd}
  {\CtxExt{\CtxExt{\Gamma}{A}}{\substTy{\CtxPr}{A}}} & {\CtxExt{\Gamma}{A}} & \Gamma
  \arrow["\CtxPr", shift left=2, from=1-1, to=1-2]
  \arrow["{\compfunctor(\substTyMor{\CtxPr})}"', shift right=2, from=1-1, to=1-2]
  \arrow["{\CtxPr[A]}", from=1-2, to=1-3]
\end{tikzcd}
\]
This equality follows from the fact $\compfunctor(\substTyMor{\CtxPr})$ is a morphism in the arrow category.
\end{proof}

\begin{propL}[\coqdocurl{Bicategories.ComprehensionCat.HPropMono}{subsingleton_weq_mono_ty}]
\label[propL]{prop:hprop-fib-mono}
Let $\compfunctor : \D \rightarrow \ArrD{\C}$ be a DFL comprehension category,
and let $A : \dob{\D}{\Gamma}$ be a type.
The morphism $\CtxPr[A]$ is a monomorphism if and only if the unique map from $A$ to $\FibTerm[\Gamma]$
in the fiber category $\fiber{\D}{\Gamma}$ is a monomorphism.
\end{propL}

\begin{proof}
We write $f$ for the unique map from $A$ to $\FibTerm[\Gamma]$.
Note that we have a commuting triangle as follows. 
\[
\begin{tikzcd}
  {\CtxExt{\Gamma}{A}} && {\CtxExt{\Gamma}{\FibTerm[\Gamma]}} \\
  & \Gamma
  \arrow["{\compfunctor(f)}", from=1-1, to=1-3]
  \arrow["{\CtxPr[A]}"', from=1-1, to=2-2]
  \arrow["{\CtxPr[{\FibTerm[\Gamma]}]}", from=1-3, to=2-2]
\end{tikzcd}
\]
Since $\CtxPr[{\FibTerm[\Gamma]}]$ is an isomorphism,
$\CtxPr[A]$ is a monomorphism if and only if $\compfunctor(f)$ is one.
By definition $\compfunctor$ is fully faithful, and hence it reflects monomorphisms.
Since $\compfunctor$ is essentially surjective by \Cref{prop:comprehension-eso},
it also is an adjoint equivalence,
and thus it preserves monomorphisms.
Hence, $f$ is a monomorphism if and only if $\compfunctor(f)$ is one.
\end{proof}

If a DFL comprehension category $\compfunctor : \D \rightarrow \ArrD{\C}$ has a suboject classifier type $\Omega$,
then terms $\Tms{\Gamma}{\Omega}$ correspond to morphisms $\FibTerm[\Gamma] \rightarrow \Omega$ in $\fiber{\D}{\Gamma}$.
By the universal property of subobject classifier,
morphisms $\FibTerm[\Gamma] \rightarrow \Omega$ correspond to monomorphisms into $\FibTerm[\Gamma]$.
By \Cref{prop:hprop-mono,prop:hprop-fib-mono} such monomorphisms correspond to internal propositions.

Finally, we look at the types of natural numbers.
Such types can be interpreted in categories with a \textbf{parameterized natural numbers object}~\cite[Definition 2.1]{maietti:2010}.
A parameterized natural numbers object in a category $\C$ with binary products and a terminal object $T$
is given by an object $n : \C$ together with morphisms $z : \rightarrow n$ and $s : n \rightarrow n$
such that for all objects $b, y : \C$ together with morphisms $z' : b \rightarrow y$ and $s' : y \rightarrow y$
there is a unique $f : b \times n \rightarrow y$ making the following diagram commute.
\[
\begin{tikzcd}
  b & {b \times n} & {b \times n} \\
  & y & y
  \arrow["{\langle \id , z \rangle}", from=1-1, to=1-2]
  \arrow["{z'}"', from=1-1, to=2-2]
  \arrow["f"', from=1-2, to=2-2]
  \arrow["{\id \times s}"', from=1-3, to=1-2]
  \arrow["f", from=1-3, to=2-3]
  \arrow["{s'}", from=2-3, to=2-2]
\end{tikzcd}
\]
Note that parameterized natural numbers object satisfy a stronger universal mapping property than ordinary ones,
and that the latter only gives a suitable notion in Cartesian closed categories.
For this reason, we use the parameterized version.

\begin{exa}
\label[exa]{exa:nno-list}
We define the local property \conceptDef{$\NNO$}{Bicategories.ComprehensionCat.LocalProperty.Examples}{parameterized_NNO_local_property}
such that $\LocalPropCat{\NNO}{\C}$ is the type of parameterized natural numbers objects in $\C$
and a functor $\F$ satisfies $\LocalPropFunctor{\NNO}{\F}$ if $\F$ preserves paramterized natural numbers objects.
\end{exa}

Now we are ready to define bicategories of various classes of toposes.
These are
\textbf{pretoposes}~\cite{moerdijk:2002},
\textbf{arithmetic pretoposes}~\cite[Definition 2.6]{maietti:2010},
\textbf{$\prod$-pretoposes}~\cite{moerdijk:2002},
\textbf{elementary toposes}~\cite{maclane:1994},
and \textbf{elementary toposes with NNO}~\cite{maclane:1994}.
We define these classes using displayed bicategories and by defining suitable local properties.

\begin{defi}
\label[defi]{def:bicat-topos}
We define the bicategories of various classes of toposes and of their corresponding comprehension categories.
In \Cref{tab:topos-local-prop}, we define the local properties that we use.
The bicategories of toposes and comprehension categories are defined in \Cref{tab:topos-bicat,tab:topos-comp-cat-bicat}.
Each of these is defined as the total bicategory of the displayed bicategory in the right column.
\end{defi}

\begin{table}[]
\begin{center}
\begin{tabular}{c | c}
Local Property                & Definition                                                                 \\
\hline
\conceptDef{$\PretopP$}{Bicategories.ComprehensionCat.LocalProperty.Examples}{pretopos_local_property}   & $\LocalAnd{\Extensive}{\Exact}$                                            \\
\conceptDef{$\APretopP$}{Bicategories.ComprehensionCat.LocalProperty.Examples}{pretopos_with_nat_local_property}  & $\LocalAnd{\PretopP}{\NNO}$                                                \\
\conceptDef{$\EltopP$}{Bicategories.ComprehensionCat.LocalProperty.Examples}{topos_local_property}    & $\LocalAnd{\PretopP}{\SubobjClass}$                                        \\
\conceptDef{$\EltopNNOP$}{Bicategories.ComprehensionCat.LocalProperty.Examples}{topos_with_NNO_local_property} & $\LocalAnd{\EltopP}{\NNO}$
\end{tabular}
\end{center}
\caption{Local properties for toposes}
\label{tab:topos-local-prop}
\end{table}

\begin{table}[]
\begin{center}
\begin{tabular}{c | c}
Bicategory of Toposes        & Displayed Bicategory                \\
\hline
\conceptDef{$\Pretop$}{Bicategories.ComprehensionCat.Biequivalence.InternalLanguageTopos}{bicat_of_univ_pretopos}   & $\LPropD{\PretopP}$                 \\
\conceptDef{$\APretop$}{Bicategories.ComprehensionCat.Biequivalence.InternalLanguageTopos}{bicat_of_univ_pretopos_with_nat}  & $\LPropD{\APretopP}$                \\
\conceptDef{$\PiPretop$}{Bicategories.ComprehensionCat.Biequivalence.InternalLanguageTopos}{bicat_of_univ_pi_pretopos} & $\LPropD{\PretopP} \times \LCCCD$   \\
\conceptDef{$\Eltop$}{Bicategories.ComprehensionCat.Biequivalence.InternalLanguageTopos}{bicat_of_univ_topos}    & $\LPropD{\EltopP} \times \LCCCD$    \\
\conceptDef{$\EltopNNO$}{Bicategories.ComprehensionCat.Biequivalence.InternalLanguageTopos}{bicat_of_univ_topos_with_NNO} & $\LPropD{\EltopNNOP} \times \LCCCD$
\end{tabular}
\end{center}
\caption{Bicategories of univalent toposes}
\label{tab:topos-bicat}
\end{table}

\begin{table}[]
\begin{center}
\begin{tabular}{c | c}
Bicategory of Comprehension Categories  & Displayed Bicategory                                 \\
\hline
\conceptDef{$\PretopCC$}{Bicategories.ComprehensionCat.Biequivalence.InternalLanguageTopos}{univ_pretopos_language}            & $\DFLCompCatPropD{\PretopP}$                         \\
\conceptDef{$\APretopCC$}{Bicategories.ComprehensionCat.Biequivalence.InternalLanguageTopos}{univ_pretopos_with_nat_language}           & $\DFLCompCatPropD{\APretopP}$                        \\
\conceptDef{$\PiPretopCC$}{Bicategories.ComprehensionCat.Biequivalence.InternalLanguageTopos}{univ_pi_pretopos_language}          & $\DFLCompCatPropD{\PretopP} \times \DFLCompCatPiD$   \\
\conceptDef{$\EltopCC$}{Bicategories.ComprehensionCat.Biequivalence.InternalLanguageTopos}{univ_topos_language}             & $\DFLCompCatPropD{\EltopP} \times \DFLCompCatPiD$    \\
\conceptDef{$\EltopNNOCC$}{Bicategories.ComprehensionCat.Biequivalence.InternalLanguageTopos}{univ_topos_with_NNO_language}          & $\DFLCompCatPropD{\EltopNNOP} \times \DFLCompCatPiD$
\end{tabular}
\end{center}
\caption{Bicategories of full univalent comprehension categories}
\label{tab:topos-comp-cat-bicat}
\end{table}

Note that one can show that the category of sets is a $\prod$W-pretopos,
and a topos if one assumes resizing axioms~\cite{rijke:2015}.
Since the local properties $\EltopP$ and $\EltopNNOP$ do not mention exponentials,
they only express that a univalent locally Cartesian closed category is a topos (with a natural numbers object).
Using the material in this section we directly obtain internal language theorems for each of these classes of toposes.

\begin{exa}
\label[exa]{exa:internal-language-topos}
By \Cref{exa:prod-biequiv,constr:local-property-biequiv},
we get the following biequivalences.
\begin{itemize}
  \item[\link{\href{\coqdocurl{Bicategories.ComprehensionCat.InternalLanguageTopos.Pretopos}{internal_language_univ_pretopos}}{\labelitemi}}]
    $\biequiv{\Pretop}{\PretopCC}$;
  \item[\link{\href{\coqdocurl{Bicategories.ComprehensionCat.InternalLanguageTopos.PretoposNat}{internal_language_univ_pretopos_with_nat}}{\labelitemi}}]
    $\biequiv{\APretop}{\APretopCC}$;
  \item[\link{\href{\coqdocurl{Bicategories.ComprehensionCat.InternalLanguageTopos.PiPretopos}{internal_language_univ_pi_pretopos}}{\labelitemi}}]
    $\biequiv{\PiPretop}{\PiPretopCC}$;
  \item[\link{\href{\coqdocurl{Bicategories.ComprehensionCat.InternalLanguageTopos.Topos}{internal_language_univ_topos}}{\labelitemi}}]
    $\biequiv{\Eltop}{\EltopCC}$;
  \item[\link{\href{\coqdocurl{Bicategories.ComprehensionCat.InternalLanguageTopos.ToposNat}{internal_language_univ_topos_with_NNO}}{\labelitemi}}]
    $\biequiv{\EltopNNO}{\EltopNNOCC}$.
\end{itemize}
\end{exa}

\section{The Internal Language of Categories with a Universe}
\label{sec:universes}
Up to now,
we have considered a wide variety of types formers,
including $\prod$-types, quotients, and the natural numbers,
and we proved internal language theorems for various classes of categories supporting some of these types.
The final type former that we consider in this paper,
are universe types for elementary toposes.
Specifically, our goal
is to extend the biequivalence $\EltopNNO \simeq \EltopNNOCC$ to include a universe
closed under all type formers in a topos~\cite{streicher:2005}.

Our focus is on Tarski-style universes~\cite{martin-lof:1984a},
which are given by a type of codes
and a map that translates each code to a type in the appropriate context.
In type theory,
we represent such a universe by a type $\UnivU$
and a type former that assigns to each term $t : \Tms{\Gamma}{\UnivU}$
a type $\El(t)$ in context $\Gamma$.
There are various introduction rules for $\UnivU$,
representing which type formers are contained in the universe.
For instance, one can postulate the existence of a code $\NatC : \UnivU$
such that $\El(\NatC) = \Nat$,
or that $\UnivU$ is closed under $\sum$-types.
Note that we do not have an elimination rule for $\UnivU$.

We use two steps to extend the biequivalence $\EltopNNO \simeq \EltopNNOCC$ with a universe type.
First, we consider a single Tarski-style universe without any introduction rule,
which is the main goal of this section.
In the next section, we further consider various type formers for such a universe
following Streicher~\cite{streicher:2005}.
We start by defining a notion of universe in categories with finite limits,
and we show that this gives rise to a displayed bicategory~(\Cref{def:disp-bicat-finlim-univ}).
Next we do the same for comprehension categories~(\Cref{def:comp-cat-universe}),
and we end this section by constructing the a displayed biequivalence
between the resulting displayed bicategories~(\Cref{constr:universe-biequiv}).

\subsection{Universes in Categories}
The notion of a universes in categories is well-established,
and it is commonly used in areas like algebraic set theory~\cite{awodey:2014,vandenberg:2009a,joyal:1995}.
In that setting,
one works with a category $\C$ together with a predicate $\Pred$ on the morphisms in $\C$,
and morphisms satisfying that predicate are called \textbf{small}.
This class of maps is required to be closed under pullback,
which expresses that smallness is preserved under substitution.
A universe for $\Pred$ is a morphism $p : e \rightarrow u$
such that each small map is some pullback of $p$.
There are various instance of such a universe,
and a concrete example comes from set theory,
assuming that has some inaccessible cardinal $\kappa$~\cite{joyal:1995}.
A function satisfies the predicate $\Pred$
if each fiber has cardinality at most $\kappa$.
The set $u$ is defined as the set of all subsets of $\kappa$
whose cardinality is less than $\kappa$,
and $e$ consists of pairs $A \in u$ together with $a \in A$.

To understand this notion of universe,
we translate it to type theory
using the internal language of categories with finite limits.
A morphism $p : e \rightarrow u$ gives us a type $\mathcal{U}$ in the empty context
and a type $\UnivUP$ in the context $\mathcal{U}$.
The resulting type $\mathcal{U}$ gives rise to a Tarski-style universe.
Since every term $t : \Tms{\Gamma}{\mathcal{U}}$ induces a context morphism $s : \Gamma \rightarrow \mathcal{U}$,
the substitution of $\UnivUP$ along $s$ gives a type in context $\Gamma$.

We take a different but equivalent approach.
Our approach is based on directly translating the type theoretic rules for Tarski-style universes to categories.
While the resulting notion is slightly more complex,
it is closer to the usual type theoretic rules.
This style is also advantageous if one has a concrete description of $\El$,
which one has for the universe of iterative sets~\cite{gratzer:2024,program:2013}
or for inductive-recursive universes~\cite{dybjer:2000}.

To define displayed bicategory $\FinLimUniv$
whose objects over a category $\C : \FinLim$ are universes for $\C$,
we take two steps.
These two steps reflect that a type theoretic universe consists of two ingredients.
The first is a type $\CatUniv$ of codes,
which we represent in a category $\C$ as an object $\CatUniv : \C$.
Hence,
we construct a displayed bicategory $\FinLimOb$
whose objects over $\C$ are objects $\CatUniv : \C$.

\begin{defi}
\label[defi]{def:finlim-ob}
We define the displayed bicategory \conceptDef{$\FinLimOb$}{Bicategories.ComprehensionCat.Universes.CatWithUniv.CatWithOb}{disp_bicat_finlim_ob} over $\FinLim$
as the displayed bicategory whose
\begin{itemize}
  \item objects over $\C$ are objects $\CatUniv : \C$;
  \item 1-cells over $\F : \C_1 \rightarrow \C_2$ from $\CatUniv_1 : \C_1$ to $\CatUniv_2 : \C_2$
    are isomorphisms $\FunctorUniv{\F} : \iso{\F(\CatUniv_1)}{\CatUniv_2}$;
  \item 2-cells over $\nt : \F \twocell \G$
    from $\FunctorUniv{\F} : \iso{\F(\CatUniv_1)}{\CatUniv_2}$
    to $\FunctorUniv{\G} : \iso{\G(\CatUniv_1)}{\CatUniv_2}$
    are proofs that the following triangle commutes.
    \[
      \begin{tikzcd}
	{\F(\CatUniv_1)} && {\G(\CatUniv_1)} \\
	& {\CatUniv_2}
	\arrow["{\nt(\CatUniv_1)}", from=1-1, to=1-3]
	\arrow["{\FunctorUniv{\F}}"', from=1-1, to=2-2]
	\arrow["{\FunctorUniv{\G}}", from=1-3, to=2-2]
      \end{tikzcd}
    \]
\end{itemize}
\end{defi}

\begin{propL}[\coqdocurl{Bicategories.ComprehensionCat.Universes.CatWithUniv.CatWithOb}{disp_univalent_2_disp_bicat_finlim_ob}]
\label[propL]{prop:finlim-ob-univ}
The displayed bicategory $\FinLimOb$ is univalent.
\end{propL}

\begin{proof}
We need to prove that $\FinLimOb$ is both locally and globally univalent.
The local univalence of $\FinLimOb$ follows from the fact that all 2-cells in $\FinLimOb$ are invertible
and that 2-cells $\dtwo{\FunctorUniv{\F}}{\FunctorUniv{\F}'}{\id}$ are the same as identities
for all $\FunctorUniv{\F}, \FunctorUniv{\F}' : \iso{\F(\CatUniv_1)}{\CatUniv_2}$.
To prove that $\FinLimOb$ is globally univalent,
we first note that adjoint equivalences over $\id[\C]$ from $\CatUniv_1$ to $\CatUniv_2$ are the same as isomorphisms $\iso{\CatUniv_1}{\CatUniv_2}$.
Since $\C$ is univalent,
identies $\CatUniv_1 = \CatUniv_2$ also are the same as isomorphisms $\iso{\CatUniv_1}{\CatUniv_2}$.
Hence, $\FinLimOb$ is univalent.
\end{proof}

The second ingredient to a universe type is a map that assigns
to each term $t : \Tms{\Gamma}{\UnivU}$
a type $\El(t)$ in context $\Gamma$.
This assignment must stable under substitution
in the sense that $\substTy{s}{\El(t)}$ and $\El(\substTm{s}{t})$ are isomorphic.
In addition,
we require various coherence axioms for these isomorphisms.
We collect these features in the notion of an \textbf{elements map}.

\begin{defi}
\label[defi]{def:cat-el-map}
Let $\C$ be a univalent category with finite limits and let $\CatUniv$ be an object in $\C$.
An \conceptDef{elements map}{Bicategories.ComprehensionCat.Universes.CatWithUniv.UniverseInCat}{cat_stable_el_map} $\CatEl$ for $\CatUniv$ consists of
\begin{itemize}
  \item a map that assigns to each morphism $t : \Gamma \rightarrow \CatUniv$ an object $\CatEl(t) : \C$ and morphism $\CatElM(t) : \CatEl(t) \rightarrow \Gamma$;
  \item for all morphisms $t : \Gamma \rightarrow \CatUniv$ and $s : \Delta \rightarrow \Gamma$
    there is a morphism $\CatElSub(s, t) : \CatEl(s \cdot t) \rightarrow \CatEl(t)$
    making the following square a pullback
    \[
      \begin{tikzcd}[column sep=large]
	{\CatEl(s \cdot t)} & {\CatEl(t)} \\
	\Delta & \Gamma & \CatUniv
	\arrow["{\CatElSub(s, t)}", from=1-1, to=1-2]
	\arrow["{\CatElM(s \cdot t)}"', from=1-1, to=2-1]
	\arrow["\lrcorner"{anchor=center, pos=0.125}, draw=none, from=1-1, to=2-2]
	\arrow["{\CatElM(t)}", from=1-2, to=2-2]
	\arrow["s"', from=2-1, to=2-2]
	\arrow["t"', from=2-2, to=2-3]
      \end{tikzcd}
    \]
\end{itemize}
If we have morphisms $t_1, t_2 : \Gamma \rightarrow \CatUniv$,
then every equality $t_1 = t_2$ gives rise to a isomorphism $\CatElEq : \iso{\CatEl(t_1)}{\CatEl(t_2)}$ making the following triangle commute.
\[
\begin{tikzcd}
  {\CatEl(t_1)} && {\CatEl(t_2)} \\
  & \Gamma
  \arrow["\CatElEq", from=1-1, to=1-3]
  \arrow["{\CatElM(t_1)}"', from=1-1, to=2-2]
  \arrow["{\CatElM(t_2)}", from=1-3, to=2-2]
\end{tikzcd}
\]
An elements map is called \conceptDef{coherent}{Bicategories.ComprehensionCat.Universes.CatWithUniv.UniverseInCat}{is_coherent_cat_stable_el_map}
whenever the following diagrams commute.
\[
\begin{tikzcd}[column sep=large]
  {\CatEl(\id[] \mathbin{\cdot} t)} & {\CatEl(t)}
  \arrow["{\CatElSub(\id, t)}", shift left=2, from=1-1, to=1-2]
  \arrow["\CatElEq"', shift right=2, from=1-1, to=1-2]
\end{tikzcd}
\quad \quad \quad
\begin{tikzcd}[column sep=huge]
  {\CatEl((s_1 \cdot s_2) \cdot t)} & {\CatEl(t)} \\
  {\CatEl(s_1 \cdot (s_2 \cdot t))} & {\CatEl(s_2 \cdot t)}
  \arrow["{\CatElSub(s_1 \cdot s_2, t)}", from=1-1, to=1-2]
  \arrow["\CatElEq"', from=1-1, to=2-1]
  \arrow["{\CatElSub(s_1, s_2 \cdot t)}"', from=2-1, to=2-2]
  \arrow["{\CatElSub(s_2, t)}"', from=2-2, to=1-2]
\end{tikzcd}
\]
\end{defi}

To understand \Cref{def:cat-el-map},
we view this definition from a type theoretic perspective.
Recall that in a univalent category $\C$ with finite limits,
contexts are interpreted as objects in $\C$,
types in context $\Gamma$ as pairs of an object $A$ together with a morphism $\CtxPr : A \rightarrow \Gamma$,
and terms of such a type are interpreted as sections of $\CtxPr$.
The object $\CatUniv$ gives rise to an object in the empty context
using the unique map $\ECtxMap : \CatUniv \rightarrow \ECtx$.
Since the following square is a pullback,
the object $\Gamma \times \CatUniv$ interprets the universe type in arbitrary contexts $\Gamma$.
\[
\begin{tikzcd}
  {\Gamma \times \CatUniv} & \CatUniv \\
  \Gamma & \ECtx
  \arrow["{\pi_2}", from=1-1, to=1-2]
  \arrow["{\pi_1}"', from=1-1, to=2-1]
  \arrow["\lrcorner"{anchor=center, pos=0.125}, draw=none, from=1-1, to=2-2]
  \arrow["\ECtxMap", from=1-2, to=2-2]
  \arrow["\ECtxMap"', from=2-1, to=2-2]
\end{tikzcd}
\]
Since sections of the map $\pi_1 : \Gamma \times \CatUniv \rightarrow \Gamma$ correspond to morphisms $t : \Gamma \rightarrow \CatUniv$,
terms of type $\CatUniv$ in context $\Gamma$ are the same as such morphisms.
Hence,
the maps $\CatEl$ and $\CatElM$
assign to each term in context $\Gamma$
a type in context $\Gamma$.

From the morphism $\CatElSub$
we get that $\CatEl(s \cdot t)$ is isomorphic to the pullback of $\CatEl(t)$ along $s$.
Since substitution of types is given by pullback
and substitution of terms by composition,
we thus get an interpretation of the expected stability rule for $\El$ up to isomorphism.
The coherences in \Cref{def:cat-el-map} tell us how to simplify $\CatElSub(s, t)$ in case $s$ is either an identity or a composition.
Note that similar coherences are given for the groupoid syntax of type theory~\cite[Definition 7]{altenkirch:2025}.

Our notion of universe corresponds to the more standard notion of universe
using representable classes of small maps,
where a universe is determined by a morphism.
Specifically, we show that every morphism gives rise to a universe in our sense,
and that this assignment is an equivalence of types.

\begin{problem}
\label[problem]{prob:el-map-equiv}
Given a univalent category $\C$ with finite limits and an object $\CatUniv : \C$,
to construct an equivalence between the type of pairs of objects $e : \C$ and morphisms $f : e \rightarrow \CatUniv$
and the type of coherent elements maps for $\CatUniv$.
\end{problem}

\begin{construction}{\coqdocurl{Bicategories.ComprehensionCat.Universes.CatWithUniv.UniverseAsMorphism}{cat_stable_el_map_coherent_mor_weq}}{prob:el-map-equiv}
\label{constr:el-map-equiv}
Given an coherent elements map for $\CatUniv$, we define $e$ to be $\CatEl(\id[u])$
and $f$ to be $\CatElM(\id[u])$ as in the following diagram.
\[
\begin{tikzcd}
  {\CatEl(\CatUniv)} \\
  \CatUniv & \CatUniv
  \arrow["{\CatElM(\id[u])}"', from=1-1, to=2-1]
  \arrow["{\id[u]}"', from=2-1, to=2-2]
\end{tikzcd}
\]

For the converse,
note that if we have a morphism $f : e \rightarrow \CatUniv$,
then each $t : \Gamma \rightarrow \CatUniv$ gives rise to a pullback as follows.
\[
\begin{tikzcd}
  x & e \\
  \Gamma & \CatUniv
  \arrow["{p_2}", from=1-1, to=1-2]
  \arrow["{p_1}"', from=1-1, to=2-1]
  \arrow["\lrcorner"{anchor=center, pos=0.125}, draw=none, from=1-1, to=2-2]
  \arrow["f", from=1-2, to=2-2]
  \arrow["t"', from=2-1, to=2-2]
\end{tikzcd}
\]
We define $\CatEl_f(t)$ to be $x$ and $\CatElM_f(t)$ to be $p_1$.
Suppose that we have morphisms $t : \Gamma \rightarrow \CatUniv$ and $s : \Delta \rightarrow \Gamma$.
Then we have the following pullback squares.
\[
\begin{tikzcd}
  x & y & e \\
  \Delta & \Gamma & \CatUniv
  \arrow["{p_2}", from=1-1, to=1-2]
  \arrow["{p_1}"', from=1-1, to=2-1]
  \arrow["\lrcorner"{anchor=center, pos=0.125}, draw=none, from=1-1, to=2-2]
  \arrow["{q_2}", from=1-2, to=1-3]
  \arrow["{q_1}"{description}, from=1-2, to=2-2]
  \arrow["\lrcorner"{anchor=center, pos=0.125}, draw=none, from=1-2, to=2-3]
  \arrow["f", from=1-3, to=2-3]
  \arrow[from=2-1, to=2-2]
  \arrow["t"', from=2-2, to=2-3]
\end{tikzcd}
\]
We define $\CatElSub_f(s, t)$ to be $p_2$.
This data constitutes a coherent elements map.

We only give a sketch of a proof that these maps are inverses.
If we have a map $f : e \rightarrow \CatUniv$,
then we get the following pullback square.
\[
\begin{tikzcd}
  e & e \\
  \CatUniv & \CatUniv
  \arrow["\id", from=1-1, to=1-2]
  \arrow["f"', from=1-1, to=2-1]
  \arrow["\lrcorner"{anchor=center, pos=0.125}, draw=none, from=1-1, to=2-2]
  \arrow["f", from=1-2, to=2-2]
  \arrow["\id"', from=2-1, to=2-2]
\end{tikzcd}
\]
Hence, we have an isomorphism between $\CatEl_f(\id[\CatUniv])$ and $f$ in $\slice{\C}{\CatUniv}$,
giving us the the desired identity.
In addition,
every coherent elements map for $\CatUniv$
gives rise to a diagram as follows.
\[
\begin{tikzcd}[column sep=large]
  {\CatEl(t)} & {\CatEl(t \cdot \id[u])} & {\CatEl(\id[u])} \\
  & \Gamma & \CatUniv & \CatUniv
  \arrow["\CatElEq", from=1-1, to=1-2]
  \arrow["{\CatElM(t)}"', from=1-1, to=2-2]
  \arrow["{\CatElSub(t, \id[u])}", from=1-2, to=1-3]
  \arrow["{\CatElM(t \cdot \id[u])}"{description}, from=1-2, to=2-2]
  \arrow["\lrcorner"{anchor=center, pos=0.125}, draw=none, from=1-2, to=2-3]
  \arrow["{\CatElM(\id[u])}", from=1-3, to=2-3]
  \arrow["t"', from=2-2, to=2-3]
  \arrow["{\id[u]}"', from=2-3, to=2-4]
\end{tikzcd}
\]
Since $\CatElEq$
is an isomorphism between
$\CatEl(t)$ and the pullback of $\CatElM(\id[u])$ along $t$,
the other desired equality follows as well.
\end{construction}

Our next goal is to describe the displayed 1-cells and 2-cells in the desired displayed bicategory $\FinLimUniv$.
To do so, we define when functors and natural transformations preserve elements maps.

\begin{defi}
\label[defi]{def:functor-finlim-univ}
Let $\C_1$ and $\C_2$ be univalent categories with finite limits,
let $\CatUniv_1 : \C_1$ and $\CatUniv_2 : \C_2$ be objects,
and let $\CatEl_1$ and $\CatEl_2$ be coherent elements maps for $\CatUniv_1$ and $\CatUniv_2$ respectively.
If we have a functor $\F : \C_1 \rightarrow \C_2$ that preserves finite limits
and an isomorphism $\FunctorUniv{\F} : \iso{\F(\CatUniv_1)}{\CatUniv_2}$,
then an \conceptDef{elements preservation family}{Bicategories.ComprehensionCat.Universes.CatWithUniv.UniverseInCat}{functor_stable_el_map} for $(\F, \FunctorUniv{\F})$
consists of isomorphisms $\FunctorEl{\F}(t) : \iso{\F(\CatEl_1(t))}{\CatEl_2(\F(t) \cdot \FunctorUniv{\F})}$
for each $t : \Gamma \rightarrow \CatUniv_1$
such that the following diagrams commute
for all $s : \Delta \rightarrow \Gamma$ and $t : \Gamma \rightarrow \CatUniv_1$.
\[
\begin{tikzcd}
  {\F(\CatEl_1(t))} && {\CatEl_2(\F(t) \cdot \FunctorUniv{\F})} \\
  & {\F(\Gamma)}
  \arrow["{\FunctorEl{\F}(t)}", from=1-1, to=1-3]
  \arrow["{\F(\CatElM_1(t))}"', from=1-1, to=2-2]
  \arrow["{\CatElM_2(\F(t) \cdot \FunctorUniv{\F})}", from=1-3, to=2-2]
\end{tikzcd}
\]
\[
\begin{tikzcd}[column sep=2.3cm]
  {\F(\CatEl_1(s \cdot t))} && {\F(\CatEl_1(t))} \\
  {\CatEl_2(\F(s \cdot t) \cdot \FunctorUniv{\F})} & {\CatEl_2(\F(s) \cdot (\F(t) \cdot \FunctorUniv{\F}))} & {\CatEl_2(\F(t) \cdot \FunctorUniv{\F})}
  \arrow["{\F(\CatElSub(s, t))}", from=1-1, to=1-3]
  \arrow["{\FunctorEl{\F}(s \cdot t)}"', from=1-1, to=2-1]
  \arrow["{\FunctorEl{\F}(t)}", from=1-3, to=2-3]
  \arrow["\CatElEq"', from=2-1, to=2-2]
  \arrow["{\CatElSub_2(\F(s), \F(t))}"', from=2-2, to=2-3]
\end{tikzcd}
\]
\end{defi}

There are a couple of things to note about \Cref{def:functor-finlim-univ}.
The elements map is only preserved by $\F$ via the isomorphism $\FunctorEl{\F}(t) : \iso{\F(\CatEl_1(t))}{\CatEl_2(\F(t) \cdot \FunctorUniv{\F})}$
rather than an equality.
Note that the composition $\F(t) \cdot \FunctorUniv{\F}$,
displayed below,
indeed gives a morphism into $\CatUniv_2$.
\[
\begin{tikzcd}
  {\F(\Gamma)} & {\F(\CatUniv_1)} & {\CatUniv_2}
  \arrow["{\F(t)}", from=1-1, to=1-2]
  \arrow["{\FunctorUniv{\F}}", from=1-2, to=1-3]
\end{tikzcd}
\]
In addition, we require two coherences for elements preservation families.
The first coherence says that $\FunctorEl{\F}(t)$ is an isomorphism in the slice category $\slice{\C_2}{\F(\Gamma)}$,
and the second coherence expresses that $\F$ preserves $\CatElSub_1$.
For natural transformations, preservation of the elements map is defined as follows.

\begin{defi}
\label[defi]{def:nat-trans-finlim-univ}
Suppose that we have coherent elements maps $\CatEl_1$ and $\CatEl_2$ for $\CatUniv_1 : \C_1$ and $\CatUniv_2 : \C_2$ respectively.
Let $\F, \G : \C_1 \rightarrow \C_2$ be functors that preserve elements maps,
and let $\nt$ be a natural transformation such that $\FunctorUniv{\F} = \nt(\CatUniv) \cdot \FunctorUniv{\G}$.
We say that $\nt$ \conceptDef{preserves elements maps}{Bicategories.ComprehensionCat.Universes.CatWithUniv.UniverseInCat}{nat_trans_preserves_el}
if for all $t : \Gamma \rightarrow \CatUniv$
the following diagram commutes.
\[
\begin{tikzcd}[column sep=2.7cm]
  {\F(\CatEl_1(t))} && {\G(\CatEl_1(t))} \\
  {\CatEl_2(\F(t) \cdot \FunctorUniv{\F})} & {\CatEl_2(\nt(\Gamma) \cdot (\G(t) \cdot \FunctorUniv{\G}))} & {\CatEl_2(\G(t) \cdot \FunctorUniv{\G})}
  \arrow["{\nt(\CatEl_1(t))}", from=1-1, to=1-3]
  \arrow["{\FunctorEl{\F}(t)}"', from=1-1, to=2-1]
  \arrow["{\FunctorEl{\G}(t)}", from=1-3, to=2-3]
  \arrow["\CatElEq"', from=2-1, to=2-2]
  \arrow["{\CatElSub_2(\nt(\Gamma), \G(t) \cdot \FunctorUniv{\F})}"', from=2-2, to=2-3]
\end{tikzcd}
\]
\end{defi}

Finally,
we put these notions together to obtain the desired displayed bicategory.

\begin{defi}
\label[defi]{def:disp-bicat-finlim-univ}
We define \conceptDef{$\FinLimEl$}{Bicategories.ComprehensionCat.Universes.CatWithUniv.UniverseDispBicat}{disp_bicat_finlim_el} as the displayed bicategory over $\total{\FinLimOb}$ whose
\begin{itemize}
  \item objects over $(\C, \CatUniv)$ are coherent elements maps for $\CatUniv$;
  \item 1-cells over $(\F, \FunctorUniv{\F})$ are elements preservation families for $(\F, \FunctorUniv{\F})$;
  \item 2-cells over $\nt$ are proofs that $\nt$ preserves elements maps.
\end{itemize}
We also define the displayed bicategory \conceptDef{$\FinLimUniv$}{Bicategories.ComprehensionCat.Universes.CatWithUniv.UniverseDispBicat}{disp_bicat_finlim_universe}
to be $\sigmabicat{\FinLimOb}{\FinLimEl}$.
\end{defi}

One can directly show that the displayed bicategory $\FinLimUniv$ is both locally preordered and groupoidal,
and that it is univalent.
In addition, adjoint equivalences in $\FinLimUniv$ can be characterized as adjoint equivalences in $\FinLim$

\begin{propL}[\coqdocurl{Bicategories.ComprehensionCat.Universes.CatWithUniv.UniverseDispBicat}{disp_univalent_2_disp_bicat_finlim_universe}]
\label[propL]{prop:disp-bicat-finlim-univ-univalent}
The displayed bicategory $\FinLimUniv$ is univalent.
\end{propL}

\begin{proof}
By \Cref{prop:finlim-ob-univ}, $\FinLimOb$ is univalent.
One can show that $\FinLimEl$ is univalent as well,
and since both $\FinLimOb$ and $\FinLimEl$ are locally preordered and groupoidal,
we get that $\FinLimUniv$ is univalent~\cite[Proposition 7.9]{ahrens:2021}.
\end{proof}

\begin{propL}[\coqdocurl{Bicategories.ComprehensionCat.Universes.CatWithUniv.UniverseDispBicat}{disp_left_adjoint_equivalence_finlim_universe}]
\label[propL]{prop:disp-bicat-finlim-univ-adjequiv}
Every displayed 1-cell in $\FinLimUniv$ over an adjoint equivalence is a displayed adjoint equivalence.
\end{propL}

\begin{proof}
Since $\FinLimUniv$ is univalent,
this statement follows from the fact that all displayed 1-cells in $\FinLimUniv$ over the identity are adjoint equivalences.
\end{proof}
  
\subsection{Universes in Comprehension Categories}
Next we consider universes in comprehension categories,
and we show that we have a displayed bicategory $\DFLCompCatUniv$ over $\DFLCompCat$
whose objects over $\compfunctor : \D \rightarrow \ArrD{\C}$ are universes.
Note that,
while we are only interested in DFL comprehension categories,
we define the relevant notions for arbitrary comprehension categories.
Our development is similar the treatment of universes in categories with finite limits,
and in this section we take the same steps.

A universe for a comprehension category consists of an object in the empty context,
representing the universe type,
together with a map that assigns to every term of the universe an actual type,
which is required to be coherently stable under substitution.
We thus construct $\DFLCompCatUniv$ in two steps,
and the first step is to define a displayed bicategory $\DFLCompCatOb$ over $\DFLCompCat$
whose objects over $\compfunctor : \D \rightarrow \ArrD{\C}$ are objects $\CatUniv : \dobP{\D}{\ECtx}$.
Such an object represents our universe type.
To define $\DFLCompCatOb$,
we use every morphism $(\F, \FF, \F_\chi)$ from a comprehension category $\compfunctor_1 : \D_1 \rightarrow \ArrD{\C_1}$ to $\compfunctor_2 : \D_2 \rightarrow \ArrD{\C_2}$
comes with an isomorphism $\FTerm{\F} : \iso{\F \> \ECtx}{\ECtx}$,
since $\F$ is required to preserve terminal objects.

\begin{defi}
\label[defi]{def:comp-cat-ob}
We define a displayed bicategory \conceptDef{$\DFLCompCatOb$}{Bicategories.ComprehensionCat.Universes.CompCatUniv.CompCatOb}{disp_bicat_comp_cat_with_ob}
over $\DFLCompCat$ such that
\begin{itemize}
  \item the objects over a DFL comprehension category $\compfunctor : \D \rightarrow \ArrD{\C}$ are objects $\CatUniv : \dobP{\D}{\ECtx}$;
  \item the 1-cells over a morphism $(\F, \FF, \F_\chi)$ from $\CatUniv_1 : \dobP{\D_1}{\ECtx}$ to $\CatUniv_2 : \dobP{\D_2}{\ECtx}$
    are isomorphisms $\CompFunctorUniv{\FF} : \isodisp{\FF(\CatUniv_1)}{\CatUniv_2}{\FTerm{\F}}$;
  \item the 2-cells over $(\nt, \tautau)$ from $\CompFunctorUniv{\FF} : \isodisp{\FF(\CatUniv_1)}{\CatUniv_2}{\FTerm{\F}}$ to $\CompFunctorUniv{\GG} : \isodisp{\GG(\CatUniv_1)}{\CatUniv_2}{\FTerm{\G}}$
    are proofs that the following triangle commutes.
    \[
      \begin{tikzcd}
	{\FF(\CatUniv_1)} && {\GG(\CatUniv_1)} \\
	& {\CatUniv_2}
	\arrow["{\tautau(\CatUniv_1)}", from=1-1, to=1-3]
	\arrow["{\CompFunctorUniv{\FF}}"', from=1-1, to=2-2]
	\arrow["{\CompFunctorUniv{\GG}}", from=1-3, to=2-2]
      \end{tikzcd}
    \]
\end{itemize}
\end{defi}

Let us make some remarks on \Cref{def:comp-cat-ob}.
Displayed 2-cells in $\DFLCompCatOb$ are expressed as a hetereogeneous equality,
because $\tautau(\CatUniv) \cdot \CompFunctorUniv{\GG}$
and $\CompFunctorUniv{\FF}$
are morphisms
over $\tau \> \ECtx \cdot \FTerm{\G} : \F \> \ECtx \rightarrow \ECtx$,
and $\FTerm{\F} : \F \> \ECtx \rightarrow \ECtx$
respectively.
These two morphisms are equal,
because their codomain is the terminal object.

In addition,
the object $\CatUniv : \dobP{\D}{\ECtx}$ gives rise to a universe in every context via weakening.
We define $\CatUnivC[\Gamma] : \dob{\D}{\Gamma}$ to be $\substTy{\ECtxMap[\Gamma]}{\CatUniv}$,
which is the following Cartesian lift.
\[
\begin{tikzcd}
  {\CatUnivC[\Gamma]} & \CatUniv \\
  \Gamma & \ECtx
  \arrow["{\substTyMor{\ECtxMap[\Gamma]}}", from=1-1, to=1-2]
  \arrow["{\ECtxMap[\Gamma]}"', from=2-1, to=2-2]
\end{tikzcd}
\]
Often we write $\CatUnivC$ instead of $\CatUnivC[\Gamma]$ if $\Gamma$ is clear from the context.
Note that our universe $\CatUnivC$ is stable under substitution.
Specifically,
if we have a context morphism $s : \Delta \rightarrow \Gamma$,
then we have an isomorphism $\CompCatUnivSub{s} : \iso{\substTy{s}{\CatUnivC[\Gamma]}}{\CatUnivC[\Delta]}$ in the fiber $\fiber{\D}{\Delta}$.
Since $\CatUnivC[\Delta]$ is defined to be $\substTy{\ECtxMap[\Delta]}{\CatUniv}$,
and $\substTy{s}{\CatUnivC[\Gamma]}$ is isomorphic to $\substTy{s}{\substTy{\ECtxMap[\Gamma]}{\CatUniv}}$,
we get the desired isomorphism from the fact that substitution is pseudofunctorial.

Finally,
the displayed bicategory $\DFLCompCatOb$ is both locally preordered and groupoidal.
It also is univalent.

\begin{propL}[\coqdocurl{Bicategories.ComprehensionCat.Universes.CompCatUniv.CompCatOb}{disp_univalent_2_disp_bicat_comp_cat_with_ob}]
\label[propL]{prop:comp-cat-ob-univalent}
The displayed bicategory $\DFLCompCatOb$ is univalent.
\end{propL}

\begin{proof}
We prove this in the same way as \Cref{prop:finlim-ob-univ}.
\end{proof}

We also define the notion of elements maps for comprehension category.
Such a map interprets the type former $\El$ in a comprehension category:
it assigns to each term of the universe
a type in the appropriate context.

\begin{defi}
\label[defi]{def:comp-cat-el-map}
Let $\compfunctor : \D \rightarrow \ArrD{\C}$ be a comprehension category
and let $\CatUniv : \dobP{\D}{\ECtx}$ be an object.
An \conceptDef{elements map}{Bicategories.ComprehensionCat.Universes.CompCatUniv.UniverseType}{comp_cat_stable_el_map} $\CompCatElTy$ for $\CatUniv$ consists of
\begin{itemize}
  \item a map that assigns to each term $t : \Tms{\Gamma}{\CatUnivC}$ a type $\CompCatElTy(t) : \dob{\D}{\Gamma}$;
  \item for all context morphisms $s : \Delta \rightarrow \Gamma$
    and terms $t : \Tms{\Gamma}{\CatUnivC}$
    an isomorphism $\CompCatElSub(s, t) : \iso{\substTy{s}{\CompCatElTy(t)}}{\CompCatElTy(\coeTm{\substTm{s}{t}}{\CompCatUnivSub{s}})}$
    in the fiber category $\fiber{\D}{\Delta}$.
\end{itemize}
Often we write $\CompCatElSub$ instead of $\CompCatElSub(s, t)$ whenever $s$ and $t$ are clear from the context.
Every equality $t_1 = t_2$ between terms terms $t_1, t_2 : \Tms{\Gamma}{\CatUnivC}$
gives rise to an isomorphism $\CompCatElEq : \iso{\CompCatElTy(t_1)}{\CompCatElTy(t_2)}$ in the fiber $\fiber{\D}{\Gamma}$.
An elements map is said to be \conceptDef{coherent}{Bicategories.ComprehensionCat.Universes.CompCatUniv.UniverseType}{comp_cat_coherent_el_map}
whenever the following diagrams commute.
\[
\begin{tikzcd}
  {\CompCatElTy(t)} && {\substTy{\id}{\CompCatElTy(t)}} \\
  & {\CompCatElTy(\coeTm{\substTm{\id}{t}}{\CompCatUnivSub{\id}})}
  \arrow["{\cong}", from=1-1, to=1-3]
  \arrow["{\CompCatElEq}"', from=1-1, to=2-2]
  \arrow["{\CompCatElSub(\id, t)}", from=1-3, to=2-2]
\end{tikzcd}
\]
\[
\begin{tikzcd}[column sep=large]
  {\substTy{s_1}{\substTy{s_2}{\CompCatElTy(t)}}} && {\substTy{(s_1 \cdot s_2)}{\CompCatElTy(t)}} \\
  {\substTy{s_1}{\CompCatElTy(\coeTm{\substTm{s_2}{t}}{\CompCatUnivSub{s_2}})}} & \CompCatElTy(\coeTm{\substTm{s_1}{\coeTm{\substTm{s_2}{t}}{\CompCatUnivSub{s_2}}}}{\CompCatUnivSub{s_1}}) & {\CompCatElTy(\coeTm{\substTm{(s_1 \cdot s_2)}{t}}{\CompCatUnivSub{s_1 \cdot s_2}})}
  \arrow["\cong", from=1-1, to=1-3]
  \arrow["{\substTy{s_1}{\CompCatElSub}}"', from=1-1, to=2-1]
  \arrow["{\CompCatElSub}", from=1-3, to=2-3] 
  \arrow["{\CompCatElSub}"', from=2-1, to=2-2] 
  \arrow["\CompCatElEq"', from=2-2, to=2-3]
\end{tikzcd}
\]
\end{defi}

Note that the map $\CompCatElTy$ is stable under substitution,
but only up to a coherent isomorphism.
To express the stability, we need to use the coercion operation on terms.
If we have $t : \Tms{\Gamma}{\CatUnivC[\Gamma]}$ and $s : \Delta \rightarrow \Gamma$,
then the term $\substTm{s}{t}$ is of type $\substTy{s}{\CatUnivC[\Gamma]}$
rather than $\CatUnivC[\Delta]$.
Since we have an isomorphism $\CompCatUnivSub{s} : \iso{\substTy{s}{\CatUnivC[\Gamma]}}{\CatUnivC[\Delta]}$,
we can coerce $\substTm{s}{t}$ into a term of type $\CatUnivC[\Delta]$.
We also require two coherence conditions for elements maps,
and these describe the isomorphism $\CompCatUnivSub(s)$ if $s$ is the identity or composition.
Our coherence conditions agree with the ones given by Altenkirch et al.~\cite{altenkirch:2025}.

Before we define when functors preserve elements maps,
we introduce some notation.
First, we describe how functors between comprehension categories act on terms.
The term $\FTms{\F}(t)$ is defined to be $\F(t) \cdot \F_\chi$.
To see that this indeed gives rise to a section,
note that we have a commutative diagram as follows.
\[
\begin{tikzcd}[column sep=large]
  {\F(\Gamma)} & {\F(\CtxExt{\Gamma}{A})} && {\CtxExt{\F(\Gamma)}{\FF(A)}} \\
  && {\F(\Gamma)}
  \arrow["{\F(t)}", from=1-1, to=1-2]
  \arrow["\id"', from=1-1, to=2-3]
  \arrow["{\F_\chi}", from=1-2, to=1-4]
  \arrow["{\F(\CtxPr)}"{description}, from=1-2, to=2-3]
  \arrow["\CtxPr", from=1-4, to=2-3]
\end{tikzcd}
\]

Next note that we have an isomorphism $\substFunctor{\FF}(s, A) : \iso{\FF(\substTy{s}{A})}{\substTy{\F(s)}{\FF(A)}}$
as indicated in the diagram below.
\[
\begin{tikzcd}[column sep=huge]
  {\FF(\substTy{s}{A})} \\
  {\substTy{\F(s)}{\FF(A)}} & {\FF(A)} \\
  {\F(\Delta)} & {\F(\Gamma)}
  \arrow["{\substFunctor{\FF}(s, A)}"', dashed, from=1-1, to=2-1]
  \arrow["{\FF(\substTyMor{s})}", from=1-1, to=2-2]
  \arrow["{\substTyMor{\F(s)}}"', from=2-1, to=2-2]
  \arrow["s"', from=3-1, to=3-2]
\end{tikzcd}
\]
The morphism $\substTyMor{\F(s)}$ is Cartesian by definition,
and $\FF(\substTyMor{s})$ is so as well,
because $\FF$ preserves Cartesian morphisms.
Since Cartesian lifts are unique up to isomorphism,
we get $\substFunctor{\FF}(s, A) : \iso{\FF(\substTy{s}{A})}{\substTy{\F(s)}{\FF(A)}}$ as desired.

Finally,
suppose that we have an isomorphism $\CompFunctorUniv{\FF} : \isodisp{\FF(\CatUniv)}{\CatUniv'}{\FTerm{\F}}$,
which witnesses that $\FF$ preserves the universe in the empty context.
Then we can extend it to an isomorphism $\CompFunctorUnivC{\FF} : \iso{\FF(\CatUnivC[\Gamma])}{\CatUnivC[\F(\Gamma)]}$
that shows that $\FF$ preserves universes in arbitrary contexts $\Gamma$.
We define $\CompFunctorUnivC{\FF}$ as the following composition of morphisms.
\[
\begin{tikzcd}[column sep=huge]
  {\FF(\substTy{\ECtxMap[\Gamma]}{\CatUniv})} & {\substTy{\F(\ECtxMap[\Gamma])}{\FF(\CatUniv)}} & {\substTy{\F(\ECtxMap[\Gamma])}{\CatUniv}} & {\substTy{\ECtxMap[\F(\Gamma)]}{\CatUniv}}
  \arrow["{\substFunctor{\FF}(\ECtxMap[\Gamma], \CatUniv)}", from=1-1, to=1-2]
  \arrow["{\substTy{\F(\ECtxMap[\Gamma])}{\CompFunctorUniv{\FF}}}", from=1-2, to=1-3]
  \arrow["{\cong}", from=1-3, to=1-4]
\end{tikzcd}
\]
Since the codomain of both $\F(\ECtxMap[\Gamma])$ and $\ECtxMap[\F(\Gamma)]$ is a terminal object,
they are equal,
and thus we have an isomorphism $\iso{\substTy{\F(\ECtxMap[\Gamma])}{\CatUniv}}{\substTy{\ECtxMap[\F(\Gamma)]}{\CatUniv}}$.
Now we are ready to define when functors preserve elements maps.

\begin{defi}
\label[defi]{def:comp-cat-functor-el-map}
Suppose that we have comprehension categories $\compfunctor : \D_1 \rightarrow \ArrD{\C_1}$ and $\compfunctor : \D_2 \rightarrow \ArrD{\C_2}$,
and elements maps $\CompCatElTy_1$ and $\CompCatElTy_2$ for $\CatUniv_1 : \dobP{\D_1}{\ECtx}$ and $\CatUniv_2 : \dobP{\D_2}{\ECtx}$ respectively.
Let $(\F, \FF, \F_\chi)$ be a morphism of comprehension categories
from $\compfunctor_1 : \D_1 \rightarrow \ArrD{\C_1}$ to $\compfunctor_2 : \D_2 \rightarrow \ArrD{\C_2}$,
and let $\CompFunctorUniv{\FF} : \isodisp{\FF(\CatUniv_1)}{\CatUniv_2}{\FTerm{\F}}$ be an isomorphism.
An \conceptDef{elements preservation family}{Bicategories.ComprehensionCat.Universes.CompCatUniv.UniverseType}{comp_cat_functor_preserves_univ_type}
$\CompCatFunctorEl{\F}$ for $(\F, \FF, \F_\chi)$
consists of an isomorphism $\CompCatFunctorEl{\F}(t) : \iso{\FF(\CompCatElTy_1(t))}{\CompCatElTy_2(\coeTm{\FTms{\F}(t)}{\CompFunctorUnivC{\FF}})}$
for each $t : \Tms{\Gamma}{\CatUnivC[1]}$
such that the following diagram commutes for all $s : \Delta \rightarrow \Gamma$ and $t : \Tms{\Gamma}{\CatUnivC[1]}$.
\[
\begin{tikzcd}
  {\FF(\substTy{s}{\CompCatElTy_1(t)})}
  & {\substTy{\F(s)}{\FF(\CompCatElTy_1(t))}} \\
  {\FF(\CompCatElTy_1(\coeTm{\substTm{s}{t}}{\CompCatUnivSub{s}}))}
  & {\substTy{\F(s)}{\CompCatElTy_2(\coeTm{\FTms{\F}(t)}{\CompFunctorUnivC{\FF}})}} \\
  {\CompCatElTy_2(\coeTm{\FTms{\F}(\coeTm{\substTm{s}{t}}{\CompCatUnivSub{s}})}{\CompFunctorUnivC{\FF}})}
  & {\CompCatElTy_2(\coeTm{\substTm{\F(s)}{\coeTm{\FTms{\F}(t)}{\CompFunctorUnivC{\FF}}}}{\CompCatUnivSub{\F(s)}})}
  \arrow["{\substFunctor{\FF}}", from=1-1, to=1-2]
  \arrow["{\FF(\CompCatElSub_1)}"', from=1-1, to=2-1]
  \arrow["{\substTy{\F(s)}{\CompCatFunctorEl{\F}}}", from=1-2, to=2-2]
  \arrow["{\CompCatElSub_2}", from=2-2, to=3-2]
  \arrow["{\CompCatFunctorEl{\F}}"', from=2-1, to=3-1]
  \arrow["{\CompCatElEq}"', from=3-1, to=3-2]
\end{tikzcd}
\]
\end{defi}

Next we define when natural transformations of comprehension categories preserve elements maps.
Let $(\F, \FF, \F_\chi)$ and $(\G, \GG, \G_\chi)$ be morphisms of comprehension categories
from $\compfunctor : \D \rightarrow \ArrD{\C}$ to $\compfunctor : \D' \rightarrow \ArrD{(\C')}$.
If we have a transformation $(\nt, \tautau)$ from $(\F, \FF, \F_\chi)$ to $(\G, \GG, \G_\chi)$,
then each $\Gamma : \C$ and $A : \dob{\D}{\Gamma}$ give rise to a morphism $\tautau(A) : \dmor{\FF(A)}{\GG(A)}{\tau(\Gamma)}$.
Since $\D'$ is a fibration,
we can equivalently represent that morphism as $\factmor{\tautau(A)} : \dmor{\FF(A)}{\substTy{\tau(\Gamma)}{\GG(A)}}{\id}$,
which lives in the fiber category $\fiber{\D'}{\F(\Gamma)}$.

\begin{defi}
\label[defi]{def:comp-cat-nat-trans-el-map}
Let $(\F, \FF, \F_\chi)$ and $(\G, \GG, \G_\chi)$ be morphisms of comprehension categories,
and let $\CompFunctorUniv{\FF} : \isodisp{\FF(\CatUniv_1)}{\CatUniv_2}{\FTerm{\F}}$
and $\CompFunctorUniv{\GG} : \isodisp{\GG(\CatUniv_1)}{\CatUniv_2}{\FTerm{\G}}$ be isomorphisms.
Suppose that we have element preservation families
$\CompCatFunctorEl{\F}$ and $\CompCatFunctorEl{\G}$
for $(\F, \FF, \F_\chi)$ and $(\G, \GG, \G_\chi)$ respectively.
Let $(\nt, \tautau)$ be a transformation from $(\F, \FF, \F_\chi)$ to $(\G, \GG, \G_\chi)$.
such that $\tautau(\CatUniv_1) \cdot \CompFunctorUniv{\GG} = \CompFunctorUniv{\FF}$.
We say that $(\nt, \tautau)$ \conceptDef{preserves elements maps}{Bicategories.ComprehensionCat.Universes.CompCatUniv.UniverseType}{comp_cat_nat_trans_preserves_univ_type}
whenever the following diagram commutes for all $t : \Tms{\Gamma}{\CatUnivC[1]}$.
\[
\begin{tikzcd}
  {\FF(\CompCatElTy_1(t))}
  & {\CompCatElTy_2(\coeTm{\FTms{\F}(t)}{\CompFunctorUnivC{\FF}})} \\
  {\substTy{\tau(\Gamma)}{\GG(\CompCatElTy_1(t))}}
  & \\
  {\substTy{\tau(\Gamma)}{\CompCatElTy_2(\coeTm{\FTms{\G}(t)}{\CompFunctorUnivC{\GG}})}}
  & {\CompCatElTy_2(\coeTm{\substTm{\tau(\Gamma)}{\coeTm{\FTms{\G}(t)}{\CompFunctorUnivC{\GG}}}}{\CompCatUnivSub{s}})}
  \arrow["{\CompCatFunctorEl{\F}}", from=1-1, to=1-2]
  \arrow["{\factmor{\tautau(\CompCatElTy(t))}}"', from=1-1, to=2-1]
  \arrow["{\CompCatElEq}", from=1-2, to=3-2]
  \arrow["{\substTy{\tau(\Gamma)}{\CompCatFunctorEl{\G}}}"', from=2-1, to=3-1]
  \arrow["{\CompCatElSub}"', from=3-1, to=3-2]
\end{tikzcd}
\]
\end{defi}

To state the diagram in \Cref{def:comp-cat-nat-trans-el-map},
we need an equality as follows.
\[
\coeTm{\FTms{\F}(t)}{\CompFunctorUnivC{\FF}}
=
\coeTm{\substTm{\tau(\Gamma)}{\coeTm{\FTms{\G}(t)}{\CompFunctorUnivC{\GG}}}}{\CompCatUnivSub{s}}
\]
This equation requires the assumption that $\tautau(\CatUniv) \cdot \CompFunctorUniv{\GG} = \CompFunctorUniv{\FF}$.
Now we put everything together to get the desired displayed bicategory.

\begin{defi}
\label[defi]{def:comp-cat-universe}
We define a displayed bicategory \conceptDef{$\DFLCompCatEl$}{Bicategories.ComprehensionCat.Universes.CompCatUniv.CompCatWithUniv}{disp_bicat_comp_cat_ob_univ}
over $\total{\DFLCompCatOb}$ such that
\begin{itemize}
  \item the objects over a pair $\compfunctor : \D \rightarrow \ArrD{\C}$ and $\CatUniv)$ are coherent elements maps for $\CatUniv$;
  \item the morphisms over a pair $(\F, \FF, \F_\chi)$ and $\CompFunctorUniv{\FF}$ are elements preservation families for $\CompFunctorUniv{\FF}$;
  \item the 2-cells over $(\nt, \tautau)$ are proofs that $(\nt, \tautau)$ preserves elements maps.
\end{itemize}
We define the displayed bicategory \conceptDef{$\DFLCompCatUniv$}{Bicategories.ComprehensionCat.Universes.CompCatUniv.CompCatWithUniv}{disp_bicat_dfl_full_comp_cat_with_univ}
to be $\sigmabicat{\DFLCompCatOb}{\DFLCompCatEl}$.
\end{defi}

The displayed bicategories $\DFLCompCatEl$ and $\DFLCompCatUniv$ are locally preordered and groupoidal.
They also are univalent,
and we have a simple check to determine whether some displayed 1-cell is an adjoint equivalence.

\begin{propL}[\coqdocurl{Bicategories.ComprehensionCat.Universes.CompCatUniv.CompCatWithUniv}{disp_univalent_2_disp_bicat_comp_cat_ob_univ}]
\label[propL]{prop:comp-cat-universe}
The displayed bicategory $\DFLCompCatUniv$ is univalent.
\end{propL}

\begin{proof}
This proposition is proven similarly to \Cref{prop:disp-bicat-finlim-univ-univalent}.
\end{proof}

\begin{propL}[\coqdocurl{Bicategories.ComprehensionCat.Universes.CompCatUniv.CompCatWithUniv}{disp_left_adjoint_equivalence_comp_cat_universe}]
\label[propL]{prop:disp-bicat-comp-cat-univ-adjequiv}
Every displayed 1-cell in $\DFLCompCatUniv$ over an adjoint equivalence is a displayed adjoint equivalence.
\end{propL}

\begin{proof}
Similar to \Cref{prop:disp-bicat-finlim-univ-adjequiv}.
\end{proof}

\subsection{Extending the Biequivalence to Universes}
We end this section by showing that $\DFLCompCatUniv$ and $\FinLimUniv$ are biequivalent.
To do so,
we construct a displayed biequivalence over the biequivalence $(\FinLimToCompCat, \CompCatToFinLim, \FinLimCompCatUnit, \FinLimCompCatCounit)$
that we constructed in \Cref{constr:biequiv}.
By \Cref{constr:disp-biequiv-simpl} it suffices to give
\begin{itemize}
  \item a displayed pseudofunctor $\FinLimToCompCatUniv$ from $\FinLimUniv$ to $\DFLCompCatUniv$ over $\FinLimToCompCat$~(\Cref{constr:finlim-to-compcat-universe});
  \item a displayed pseudofunctor $\CompCatToFinLimUniv$ from $\DFLCompCatUniv$ to $\FinLimUniv$ over $\CompCatToFinLim$~(\Cref{constr:compcat-to-finlim-universe});
  \item a displayed pseudotransformation $\FinLimCompCatUnitUniv$ from $\FinLimToCompCatUniv \cdot \CompCatToFinLimUniv$ to $\idd$ over $\FinLimCompCatUnit$~(\Cref{constr:universe-unit});
  \item a displayed pseudotransformation $\FinLimCompCatCounitUniv$ from $\idd$ to $\CompCatToFinLimUniv \cdot \FinLimToCompCatUniv$ over $\FinLimCompCatCounit$ (\Cref{constr:universe-counit})
\end{itemize}
such that $\FinLimCompCatUnitUniv$ and $\FinLimCompCatCounitUniv$ are pointwise displayed adjoint equivalences.
Note that by \Cref{prop:disp-bicat-comp-cat-univ-adjequiv} and \Cref{prop:disp-bicat-finlim-univ-adjequiv},
all displayed morphisms in $\DFLCompCatUniv$ and $\FinLimUniv$  over adjoint equivalences are displayed adjoint equivalences,
so it suffices to construct aforementioned pseudofunctors and pseudotransformations.

While each of the displayed bicategories that we considered in \Cref{sec:lccc,sec:topos}, has trivial 2-cells,
neither $\DFLCompCatUniv$ nor $\FinLimUniv$ has trivial 2-cells.
Their types of 2-cells are only guaranteed to be propositions
and they are not necessarily inhabited.
The construction of the desired pseudofunctors and pseudotransformations thus becomes more complicated.
To define a displayed pseudofunctor,
one must construct a suitable action on displayed 2-cells
and 2-cells that witness the preservation of the identity and composition,
and for a displayed pseudotransformation,
one must construct the displayed 2-cells that witness naturality.
Below we give a sketch of each of the components of the displayed biequivalence.
We only describe the action of objects for each of the pseudofunctors and pseudotransformations.

\begin{problem}
\label[problem]{prob:finlim-to-compcat-universe}
To construct $\FinLimToCompCatUniv : \dmor{\FinLimUniv}{\DFLCompCatUniv}{\FinLimToCompCat}$.
\end{problem}

\begin{construction}{\coqdocurl{Bicategories.ComprehensionCat.Universes.Biequiv.ToCompCatUniv}{finlim_to_dfl_comp_cat_disp_psfunctor_universe}}{prob:finlim-to-compcat-universe}
\label{constr:finlim-to-compcat-universe}
Let $\C$ be a univalent category with finite limits,
let $\CatUniv : \C$ be an object,
and let $\CatEl$ be a coherent elements map for $\CatUniv$.
From $\CatUniv$ we get an object $\CatUniv : \dobP{\ArrD{\C}}{\ECtx}$,
which is determined by the unique map $\CatUniv \rightarrow \ECtx$.
To construct an elements map for $\CatUniv$,
we use that terms $t : \Tms{\Gamma}{\CatUnivC[\Gamma]}$ in $\id : \ArrD{\C} \rightarrow \ArrD{\C}$ correspond to morphisms $\TmToMor{t} : \Gamma \rightarrow \CatUniv$.
This is because we have the following pullback square.
\[
\begin{tikzcd}
  \Gamma \\
  & {\CatUniv \times \Gamma} & \Gamma \\
  & \CatUniv & \ECtx
  \arrow["t"{description}, dashed, from=1-1, to=2-2]
  \arrow["\id", bend left=20, from=1-1, to=2-3]
  \arrow["{\TmToMor{t}}"', bend right=20, from=1-1, to=3-2]
  \arrow["{\pi_2}", from=2-2, to=2-3]
  \arrow["{\pi_1}"', from=2-2, to=3-2]
  \arrow["\lrcorner"{anchor=center, pos=0.125}, draw=none, from=2-2, to=3-3]
  \arrow["\ECtxMap", from=2-3, to=3-3]
  \arrow["\ECtxMap"', from=3-2, to=3-3]
\end{tikzcd}
\]
Hence, every term $t : \Tms{\Gamma}{\CatUnivC[\Gamma]}$
gives rise to a type $\CatEl(\TmToMor{t})$ in context $\Gamma$.
Since substitution in the comprehension category $\id : \ArrD{\C} \rightarrow \ArrD{\C}$ is interpreted as pullback,
this assignment is coherently stable under substitution.
Hence, we get a coherent elements map for $\id : \ArrD{\C} \rightarrow \ArrD{\C}$.
\end{construction}

\begin{problem}
\label[problem]{prob:compcat-to-finlim-universe}
To construct a displayed pseudofunctor $\CompCatToFinLimUniv$ from $\DFLCompCatUniv$ to $\FinLimUniv$ over $\CompCatToFinLim$.
\end{problem}

\begin{construction}{\coqdocurl{Bicategories.ComprehensionCat.Universes.Biequiv.ToCatFinLimUniv}{dfl_comp_cat_to_finlim_disp_psfunctor_universe}}{prob:compcat-to-finlim-universe}
\label{constr:compcat-to-finlim-universe}
Suppose that we have a DFL comprehension category $\compfunctor : \D \rightarrow \ArrD{\C}$,
an object $\CatUniv : \dobP{\D}{\ECtx}$,
and a coherent elements map $\CompCatElTy$ for $\CatUniv$.
Note that we have an object $\CtxExt{\ECtx}{\CatUniv} : \C$,
and our goal is to construct a coherent elements map for $\CtxExt{\ECtx}{\CatUniv}$.
Morphisms $t : \Gamma \rightarrow \CtxExt{\ECtx}{\CatUniv}$ are the same as terms $\MorToTm{t} : \Tms{\Gamma}{\CatUnivC[\Gamma]}$,
because we have the following pullback square.
\[
\begin{tikzcd}
  \Gamma \\
  & {\CtxExt{\ECtx}{\CatUniv[\Gamma]}} & \Gamma \\
  & {\CtxExt{\ECtx}{\CatUniv}} & \ECtx
  \arrow["{\MorToTm{t}}"{description}, dashed, from=1-1, to=2-2]
  \arrow["\id", bend left=20, from=1-1, to=2-3]
  \arrow["t"', bend right=20, from=1-1, to=3-2]
  \arrow["{\pi_2}", from=2-2, to=2-3]
  \arrow["{\pi_1}"', from=2-2, to=3-2]
  \arrow["\CtxPr", from=2-3, to=3-3]
  \arrow[""{name=0, anchor=center, inner sep=0}, "\ECtxMap"', from=3-2, to=3-3]
  \arrow["\lrcorner"{anchor=center, pos=0.125}, draw=none, from=2-2, to=0]
\end{tikzcd}
\]
Hence, to define the desired elements map,
we send each morphism $t : \Gamma \rightarrow \CtxExt{\ECtx}{\CatUniv}$
to $\CompCatElTy(\MorToTm{t})$.
This assignment is coherently stable under substitution,
because $\CompCatElTy$ is so.
\end{construction}

\begin{problem}
\label[problem]{prob:universe-unit}
To construct a displayed pseudotransformation $\FinLimCompCatUnitUniv$ from $\FinLimToCompCatUniv \cdot \CompCatToFinLimUniv$ to $\idd$ over $\FinLimCompCatUnit$.
\end{problem}

\begin{construction}{\coqdocurl{Bicategories.ComprehensionCat.Universes.Biequiv.UnitForUniv}{dfl_comp_cat_to_finlim_disp_psfunctor_unit}}{prob:universe-unit}
\label{constr:universe-unit}
Let $\C$ be a univalent category with finite limits with an elements map $\CatEl$ for $\CatUniv : \C$.
Recall that the pseudotransformation $\FinLimCompCatUnit(\C) : \CompCatToFinLimUniv(\FinLimToCompCatUniv(\C)) \rightarrow \C$ is defined to be the identity functor pointwise.
We first note that the universe object in $\FinLimToCompCatUniv(\C)$ is $\CatUniv \rightarrow \ECtx$,
and thus the universe object in $\CompCatToFinLimUniv(\FinLimToCompCatUniv(\C))$ is $\CatUniv$.
Hence,
we can take $\CompFunctorUniv{\FinLimCompCatUnit(\C)}$ to be the identity.
To construct the necessary elements preservation family,
we need to construct for every $t : \Gamma \rightarrow \CatUniv$
an isomorphism between $\CatEl(\TmToMor{\MorToTm{t}})$
and $\CatEl(t \cdot \id)$.
The desired isomorphism is given by $\CatElEq$,
since one can show that $\TmToMor{\MorToTm{t}} = t \cdot \id$.
\end{construction}

\begin{problem}
\label[problem]{prob:universe-counit}
To construct a displayed pseudotransformation $\FinLimCompCatCounitUniv$ from $\idd$ to $\CompCatToFinLimUniv \cdot \FinLimToCompCatUniv$ over $\FinLimCompCatCounit$.
\end{problem}

\begin{construction}{\coqdocurl{Bicategories.ComprehensionCat.Universes.Biequiv.CounitForUniv}{dfl_comp_cat_to_finlim_disp_psfunctor_counit}}{prob:universe-counit}
\label{constr:universe-counit}
Suppose that we have a DFL comprehension category $\compfunctor : \D \rightarrow \ArrD{\C}$,
an object $\CatUniv : \dobP{\D}{\ECtx}$,
and a coherent elements map $\CompCatElTy$ for $\CatUniv$.
Note that the underlying displayed functor of $\FinLimCompCatCounit(\compfunctor) : \D \rightarrow \ArrD{\C}$ is $\compfunctor$,
and that the universe object in $\FinLimToCompCatUniv(\CompCatToFinLimUniv(\compfunctor))$ is $\CtxExt{\ECtx}{\CatUniv}$
We can thus define $\CompFunctorUniv{\FinLimCompCatCounit(\compfunctor)}$ to be the identity.
The construction of the necessary elements preservation family is similar to \Cref{constr:universe-unit}.
\end{construction}

\begin{problem}
\label[problem]{prob:universe-biequiv}
To constructor a displayed biequivalence $(\FinLimToCompCatUniv, \CompCatToFinLimUniv, \FinLimCompCatUnitUniv, \FinLimCompCatCounitUniv)$ from $\FinLimUniv$ to $\DFLCompCatUniv$ over $(\FinLimToCompCat, \CompCatToFinLim, \FinLimCompCatUnit, \FinLimCompCatCounit)$.
\end{problem}

\begin{construction}{\coqdocurl{Bicategories.ComprehensionCat.Universes.UniverseBiequiv}{finlim_biequiv_dfl_comp_cat_psfunctor_universe}}{prob:universe-biequiv}
\label{constr:universe-biequiv}
The necessary displayed pseudofunctors are constructed in \Cref{constr:finlim-to-compcat-universe,constr:compcat-to-finlim-universe},
and the necessary displayed pseudotransformations are constructed in \Cref{constr:universe-unit,constr:universe-counit}.
Note that these pseudotransformations are pointwise displayed adjoint equivalences by \Cref{prop:disp-bicat-comp-cat-univ-adjequiv,prop:disp-bicat-finlim-univ-adjequiv},
and thus we have a displayed biequivalence by \Cref{constr:disp-biequiv-simpl}.
\end{construction}

\section{The Internal Language of Toposes with a Universe}
\label{sec:universe-types}
We finish our development by considering elementary toposes with a natural numbers object
and a universe that is closed under the type formers in the internal language of such toposes.
In \Cref{sec:topos,sec:universes},
we already gave the necessary prerequisites:
by \Cref{exa:internal-language-topos},
we have a biequivalence $\biequiv{\EltopNNO}{\EltopNNOCC}$,
and by \Cref{constr:universe-biequiv},
we have a displayed biequivalence $\biequiv{\FinLimUniv}{\DFLCompCatUniv}$.
We can combine these biequivalences
to obtain a biequivalence $\biequiv{\EltopNNOUniv}{\EltopNNOCCUniv}$
between the bicategories of toposes with a natural numbers object and a universe object
and DFL comprehension categories with $\prod$-types and a universe object,
and that satisfy $\EltopNNOP$ fiberwise.

However,
this universe is not required to be closed under any type former
that one might expect.
We are not guaranteed that, for instance,
our universe contains the terminal object
or that it is closed under $\sum$-types.
Hence,
the main goal of this section is to extend the biequivalence
with type formers in the universe.
Our focus lies on the closure conditions for universes in toposes
given by Streicher~\cite{streicher:2005}.
Specifically,
we look at
the natural numbers object,
the subobject classifier,
propositional resizing,
$\sum$-types,
and $\prod$-types.

To construct the desired biequivalence,
we use another construction,
namely \textbf{reindexing}.
If we have a biequivalence $\F : \B_1 \rightarrow \B_2$,
then every displayed bicategory $\D$ over $\B_2$
gives rise to a displayed bicategory $\reindex{\F}{\D}$ over $\B_1$
together with a displayed biequivalence from $\reindex{\F}{\D}$ to $\D$ over $\F$.
This construction allows us to only construct a single displayed bicategory in this section,
namely $\EltopNNOTopUnivD$ over the bicategory $\EltopNNOUniv$
whose objects over an elementary topos with a universe $\CatUniv$
represent that $\CatUniv$ is closed under the aforementioned type formers.
Using reindexing, we obtain a biequivalent displayed bicategory over $\EltopNNOCCUniv$.
We also have a description of when a universe $\CatUniv$ in some DFL comprehension category $\compfunctor : \D \rightarrow \ArrD{\C}$
is closed under type formers.
Specifically, $\CatUniv$ is closed under some type former
if the universe $\CtxExt{\ECtx}{\CatUniv}$ in the category $\C$ of contexts is closed under it.

To see why the resulting description of type formers is sensible,
we consider an example.
Suppose that we have some DFL comprehension category $\compfunctor : \D \rightarrow \ArrD{\C}$
that comes with a fiberwise natural numbers object $\Nat$
and an elements map $\CatEl$ for $\CatUniv : \dobP{\D}{\ECtx}$.
To say that $\CatEl$ contains $\Nat$,
we need a term $\NatCode : \Tms{\ECtx}{\CatUniv}$
and an isomorphism $\NatIso : \iso{\CatEl(\NatCode)}{\Nat}$ in the fiber category $\fiber{\D}{\ECtx}$.
We saw in \Cref{constr:compcat-to-finlim-universe}
that terms $\NatCode : \Tms{\ECtx}{\CatUniv}$
are the same as morphisms $\ECtx \rightarrow \CtxExt{\ECtx}{\CatUniv}$,
In addition,
isomorphisms $\NatIso : \iso{\CatEl{\NatCode}}{\Nat}$ in $\fiber{\D}{\ECtx}$
are the same as isomorphisms $\iso{\CtxExt{\ECtx}{\CatEl(\NatCode)}}{\CtxExt{\ECtx}{\Nat}}$ in the slice category $\slice{\C}{\ECtx}$
because $\compfunctor$ is fully faithful.
Since $\CtxExt{\ECtx}{\Nat}$ is a natural numbers object in $\slice{\C}{\ECtx}$,
we can thus express that $\CatEl$ contains $\Nat$ by only referring to $\C$,
which corresponds to the description obtain from reindexing.

\subsection{Reindexing Displayed Bicategories}
We start this section by discussing reindexing of displayed bicategories.
Specifically, we construct from each displayed bicategory $\D$ over $\B_2$
and each pseudofunctor $\F : \B_1 \rightarrow \B_2$
a new displayed bicategory $\reindex{\F}{\D}$ over $\B_1$,
which is the reindexing of $\D$ along $\F$.
We also show that we have a displayed biequivalence between $\reindex{\F}{\D}$ and $\D$ over $\F$
if $\F$ is a biequivalence.

The type $\dmor{\xx}{\yy}{f}$ of morphisms in $\reindex{\F}{\D}$ is defined to be $\dmor{\xx}{\yy}{\F(f)}$.
However,
a difficulty comes up when one tries to define the identity and composition in $\reindex{\F}{\D}$.
To define the displayed identity,
we need a morphism from $\xx$ to $\xx$ over $\F(\id)$.
Since we only have an invertible 2-cell $\identitor{\F} : \invcell{\id}{\F(\id)}$ in general,
we cannot necessarily transport $\idd : \dmor{\xx}{\xx}{\id}$ to obtain a morphism of type $\dmor{\xx}{\xx}{\F(\id)}$.
We have the same problem for composition.
If we have $\ff : \dmor{\xx}{\yy}{\F(f)}$ and $\gg : \dmor{\yy}{\zz}{\F(g)}$,
then their composition in $\reindex{\F}{\D}$ should be of type $\dmor{\xx}{\zz}{\F(f \cdot g)}$.
However,
we only have a morphism $\ff \cdot \gg : \dmor{\xx}{\zz}{\F(f) \cdot \F(g)}$,
and an invertible 2-cell $\compositor{\F} : \invcell{\F(f) \cdot \F(g)}{\F(f \cdot g)}$.

To define the identity and composition in $\reindex{\F}{\D}$,
we assume that $\D$ comes with additional structure,
namely a \textbf{local isocleaving}~\cite{buckley:2014}.
If a displayed bicategory is equipped with a local isocleaving,
then one is able to transport displayed 1-cells along invertible 2-cells in the base.
Local isocleavings are defined in the usual via a lifting property.

\begin{defi}
\label[defi]{def:local-isocleaving}
Let $\D$ be a displayed bicategory over $\B$.
A \conceptDef{local isocleaving}{Bicategories.DisplayedBicats.CleavingOfBicat}{local_iso_cleaving} for $\D$
is a map assigns to every invertible 2-cell $\tau : f \twocell g$
and $\gg : \dmor{\xx}{\yy}{g}$
a displayed 1-cell $\substTy{\tau}{\gg} : \dmor{\xx}{\yy}{f}$
and a invertible 2-cell $\substTyMor{\tau} : \dtwo{\substTy{\tau}{\gg}}{\gg}{\tau}$.
\end{defi}

We can visualize local isocleavings using the diagram below.
\[
\begin{tikzcd}[row sep = huge]
  {\xx} & {\yy} \\
  x & y
  \arrow[""{name=0, anchor=center, inner sep=0}, "\gg"',  bend right=40, from=1-1, to=1-2]
  \arrow[""{name=1, anchor=center, inner sep=0}, "\substTy{\tau}{\gg}", bend left=40, dashed, from=1-1, to=1-2]
  \arrow[""{name=2, anchor=center, inner sep=0}, "g"', bend right=40, from=2-1, to=2-2]
  \arrow[""{name=3, anchor=center, inner sep=0}, "f", bend left=40, from=2-1, to=2-2]
  \arrow["\tau", shorten <=2pt, shorten >=2pt, Rightarrow, from=3, to=2, shorten >=1ex]
  \arrow["\substTyMor{\tau}", shorten <=2pt, shorten >=2pt, Rightarrow, from=1, to=0, shorten >=1ex]
\end{tikzcd}
\]
Just like every displayed categories over a univalent category can be equipped with an isocleaving~\cite[Construction 5.12]{ahrens:2019a},
we can equip every displayed bicategory $\D$ over a locally univalent bicategory $\B$ with a local isocleaving,
By \Cref{prop:equivalence-induction},
we can assume that $\tau$ is an identity,
and then we can take $\substTy{\id}{\gg}$ to be $\gg$
and $\substTyMor{\id} : \dtwo{\gg}{\gg}{\id}$ to be the identity.

\begin{defi}
\label[defi]{def:reindex-disp-bicat}
Suppose that we have bicategories $\B_1$ and $\B_2$,
a pseudofunctor $\F : \B_1 \rightarrow \B_2$,
and a displayed bicategory $\D$ over $\B_2$
that is locally preordered
and equipped with a local isocleaving.
We define the \conceptDef{reindexed displayed bicategory}{Bicategories.DisplayedBicats.Examples.Reindex}{reindex_disp_bicat},
denoted by $\reindex{\F}{\D}$,
as the displayed bicategory over $\B_1$ whose
\begin{itemize}
  \item objects over $x : \B_1$ are objects $\dob{\D}{\F(x)}$;
  \item morphisms over $f : x \rightarrow y$ from $\xx : \dob{\D}{\F(x)}$ to $\yy : \dob{\D}{\F(y)}$
    are morphisms $\dmor{\xx}{\yy}{\F(f)}$;
  \item 2-cells over $\tau : f \twocell g$ from $\ff : \dmor{\xx}{\yy}{\F(f)}$ to $\gg : \dmor{\xx}{\yy}{\F(g)}$
    are 2-cells $\tautau : \dtwo{\ff}{\gg}{\tau}$.
\end{itemize}
The displayed identity is defined as the following 1-cell.
\[
\begin{tikzcd}[row sep = huge,column sep = huge]
  {\xx} & {\xx} \\
  x & x
  \arrow[""{name=0, anchor=center, inner sep=0}, "\idd"',  bend right=30, from=1-1, to=1-2]
  \arrow[""{name=1, anchor=center, inner sep=0}, "\substTy{(\identitor{\F}^{-1})}{\idd}", bend left=30, dashed, from=1-1, to=1-2]
  \arrow[""{name=2, anchor=center, inner sep=0}, "\id"', bend right=30, from=2-1, to=2-2]
  \arrow[""{name=3, anchor=center, inner sep=0}, "\F(\id)", bend left=30, from=2-1, to=2-2]
  \arrow["\identitor{\F}^{-1}", shorten <=2pt, shorten >=2pt, Rightarrow, from=3, to=2, shorten >=1ex]
  \arrow["\substTyMor{\identitor{\F}^{-1}}", shorten <=2pt, shorten >=2pt, Rightarrow, from=1, to=0, shorten >=1ex]
\end{tikzcd}
\]
We define the displayed composition of $\ff : \dmor{\xx}{\yy}{\F(f)}$ and $\gg : \dmor{\yy}{\zz}{\F(g)}$ as follows.
\[
\begin{tikzcd}[row sep = huge,column sep = huge]
  {\xx} & {\zz} \\
  x & z
  \arrow[""{name=0, anchor=center, inner sep=0}, "\ff \cdot \gg"',  bend right=30, from=1-1, to=1-2]
  \arrow[""{name=1, anchor=center, inner sep=0}, "\substTy{(\compositor{\F}^{-1})}{\ff \cdot \gg}", bend left=30, dashed, from=1-1, to=1-2]
  \arrow[""{name=2, anchor=center, inner sep=0}, "\F(f) \cdot \F(g)"', bend right=30, from=2-1, to=2-2]
  \arrow[""{name=3, anchor=center, inner sep=0}, "\F(f \cdot g)", bend left=30, from=2-1, to=2-2]
  \arrow["\compositor{\F}^{-1}", shorten <=2pt, shorten >=2pt, Rightarrow, from=3, to=2, shorten >=1ex]
  \arrow["\substTyMor{\compositor{\F}^{-1}}", shorten <=2pt, shorten >=2pt, Rightarrow, from=1, to=0, shorten >=1ex]
\end{tikzcd}
\]
To define the right unitor in $\reindex{\F}{\D}$,
we need to make a 2-cell from $\substTy{(\compositor{\F}^{-1})}{\ff \cdot \idd^*}$ to $\ff$ over $\F(\runitor)$
where we write $\idd^*$ for $\substTy{(\identitor{\F}^{-1})}{\idd}$.
We have the following composition of 2-cells.
\[
\begin{tikzcd}[column sep=huge]
  \xx & \yy & \yy
  \arrow[""{name=0, anchor=center, inner sep=0}, "\substTy{(\compositor{\F}^{-1})}{\ff \cdot \idd^*}", bend left=50, from=1-1, to=1-3]
  \arrow[""{name=1, anchor=center, inner sep=0}, "\idd"', bend right=30, from=1-2, to=1-3]
  \arrow[""{name=2, anchor=center, inner sep=0}, "{\idd^*}", bend left=30, from=1-2, to=1-3]
  \arrow[""{name=3, anchor=center, inner sep=0}, "\ff"{description}, from=1-1, to=1-2]
  \arrow[""{name=4, anchor=center, inner sep=0}, "\ff"', bend right=50, from=1-1, to=1-3]
  \arrow["\substTyMor{\compositor{\F}^{-1}}"{description}, shorten <=2pt, shorten >=2pt, Rightarrow, from=0, to=3, shorten >=1ex]
  \arrow["\substTyMor{\identitor{\F}^{-1}}", shorten <=2pt, shorten >=2pt, Rightarrow, from=2, to=1, shorten >=1ex]
  \arrow["\overline{\runitor}", shorten <=5pt, shorten >=2pt, Rightarrow, from=3, to=4, shorten >=1ex]
\end{tikzcd}
\]
This composition lives over the following 2-cell.
\[
\begin{tikzcd}[column sep=huge]
  \F(x) & \F(y) & \F(y)
  \arrow[""{name=0, anchor=center, inner sep=0}, "\F(f \cdot \id)", bend left=50, from=1-1, to=1-3]
  \arrow[""{name=1, anchor=center, inner sep=0}, "\id"', bend right=30, from=1-2, to=1-3]
  \arrow[""{name=2, anchor=center, inner sep=0}, "{\F(\id)}", bend left=30, from=1-2, to=1-3]
  \arrow[""{name=3, anchor=center, inner sep=0}, "f"{description}, from=1-1, to=1-2]
  \arrow[""{name=4, anchor=center, inner sep=0}, "f"', bend right=50, from=1-1, to=1-3]
  \arrow["\compositor{\F}^{-1}"{description}, shorten <=2pt, shorten >=2pt, Rightarrow, from=0, to=3, shorten >=1ex]
  \arrow["\identitor{\F}^{-1}", shorten <=2pt, shorten >=2pt, Rightarrow, from=2, to=1, shorten >=1ex]
  \arrow["\runitor", shorten <=5pt, shorten >=2pt, Rightarrow, from=3, to=4, shorten >=1ex]
\end{tikzcd}
\]
This 2-cell is equal to $\F(\runitor)$ as desired,
since $\F$ is a pseudofunctor.
The other operations on 2-cells are defined similarly.
\end{defi}

Since we assumed $\D$ to be locally preordered,
$\reindex{\F}{\D}$ is so as well,
because it inherits its 2-cells from $\D$.
For this reason, all bicategorical laws of $\reindex{\F}{\D}$ hold vacuously.
We can also show that $\reindex{\F}{\D}$ is univalent.

\begin{propL}[\coqdocurl{Bicategories.DisplayedBicats.Examples.Reindex}{disp_univalent_2_reindex_disp_bicat}]
\label[propL]{prop:reindex-disp-bicat-univalent}
If $\D$ is univalent,
then so is $\reindex{\F}{\D}$.
\end{propL}

\begin{proof}
This proposition follows from the fact
that invertible 2-cells and adjoint equivalences in $\reindex{\F}{\D}$
correspond to invertible 2-cells and adjoint equivalences in $\D$ respectively.
\end{proof}

Next we show that we can lift every biequivalence $(\F, \G, \eta, \varepsilon)$ with $F : \B_1 \rightarrow \B_2$
to a displayed biequivalence $(\FF, \GG, \etaeta, \epsilonepsilon)$ where $\FF : \dmor{\reindex{\F}{\D}}{\D}{\F}$.
We make several simplifying assumptions in this construction,
namely that $\D$ has contractible 2-cells
and that $\B_1$ and $\B_2$ are univalent.
However, to define $\epsilonepsilon$,
we need to make a further assumption:
there needs to be invertible 2-cells witnessing the triangle identities for $(\F, \G, \eta, \varepsilon)$~\cite[Definition 2.1]{gurski:2012}.

To see why, we describe how $\FF$ and $\GG$ are defined concretely.
The displayed pseudofunctor $\FF$ sends $\xx : \dob{\reindex{\F}{\D}}{x}$ to $\xx : \dob{\D}{\F(x)}$.
To define $\GG$,
we need to map every $\xx : \dob{\D}{x}$ to some $\GG(\xx) : \dob{\reindex{\F}{\D}}{\F(\G(x))}$.
Here we use that $\B_2$ is univalent.
Specifically,
since we have an adjoint equivalence $\eta(x) : \adjequiv{x}{\F(\G(x))}$,
every $\xx : \dob{\D}{x}$ can be lifted to obtain $\substTy{\eta(x)}{\xx} : \dob{\D}{\F(\G(x))}$ as follows.
\[
\begin{tikzcd}[column sep=large]
  {\substTy{\eta(x)}{\xx}} & \xx \\
  {\F(\G(x))} & x
  \arrow["{\substTyMor{\eta(x)}}", dashed, from=1-1, to=1-2]
  \arrow["{\eta(x)}"', from=2-1, to=2-2]
\end{tikzcd}
\]
The existence of this lift follows from \Cref{prop:equivalence-induction}.
It suffices to assume that $\eta(x)$ is the identity,
and we take $\substTy{\eta(x)}{\xx}$ to be $\xx$ and $\substTyMor{\eta(x)}$ to be the identity.
Note that it would also have been sufficient to assume that $\D$ is equipped with a global isocleaving~\cite{buckley:2014}.

For the counit,
we need to find for each $\xx : \dob{\D}{\F(x)}$ a displayed morphism $\epsilonepsilon(\xx)$.
\[
\begin{tikzcd}[column sep=large]
  {\substTy{\eta(\F(x))}{\xx}} & \xx \\
  {\F(\G(\F(x))} & {\F(x)}
  \arrow["{\epsilonepsilon(\xx)}", dashed, from=1-1, to=1-2]
  \arrow["{\F(\varepsilon(x))}"', from=2-1, to=2-2]
\end{tikzcd}
\]
Here one would expect to use the morphism $\dmor{\substTy{\eta(x)}{\xx}}{\xx}{\eta(\F(x))}$.
However,
this morphism does not lie in the fiber of $\F(\varepsilon(x))$,
but instead it lies in the fiber of $\eta(\F(x))$.
Hence,
to get the desired morphism,
we need to have an invertible 2-cell between $\F(\varepsilon(x))$ and $\eta(\F(x))$,
which means that our biequivalence must be sufficiently coherent.

To solve this problem,
we use that every biequivalence can be refined into a coherent one
where the triangle identities are satisfied up to coherent invertible 2-cell~\cite[Theorem 3.2]{gurski:2012}.
Similar statements have been considered for quasi-invertible maps in homotopy type theory~\cite[Theorem 4.2.3]{program:2013},
and for adjoint equivalence in a bicategory~\cite[Theorem 1.9]{gurski:2012}.
Since we only need the triangle identities,
we discuss those and not any coherence.

\begin{problem}
\label{prob:coherent-biequiv}
Given a biequivalence $(\F, \G, \eta, \varepsilon)$ where $\F : \B_1 \rightarrow \B_2$,
to construct a biequivalence $(\F, \G, \eta, \varepsilon')$
and
and for each $x : \B_1$
an invertible 2-cell $\theta : \invcell{\F(\varepsilon'(x))}{\eta(\F(x))}$.
\end{problem}

\begin{construction}{\coqdocurl{Bicategories.PseudoFunctors.BiequivalenceCoherent}{coherent_is_biequivalence_adjoints}}{prob:coherent-biequiv}
\label{constr:coherent-biequiv}
We only show how $\varepsilon'$ is defined.
Note that we have pseudotransformations $\eta : \G \cdot \F \twocell \id$ and $\varepsilon : \id \twocell \F \cdot G$.
Since $\varepsilon$ is an equivalence,
we also have a pseudotransformation $\varepsilon^{-1} : \F \cdot \G \twocell \id$.
We define $\varepsilon'$ as the following composition.
\[
\begin{tikzcd}[column sep = 50pt, row sep = large]
  {\F \cdot \G} & {\id \cdot (\F \cdot \G)} & {(\F \cdot \G) \cdot (\F \cdot \G)} & {\F \cdot (\G \cdot (\F \cdot \G))} \\
  {\F \cdot ((\G \cdot \F) \cdot \G)} & {\F \cdot (\id \cdot \G)} & {\F \cdot \G} & \id
  \arrow["\linvunitor", Rightarrow, from=1-1, to=1-2]
  \arrow["{\varepsilon^{-1} \whiskerr (\F \cdot \G)}", Rightarrow, from=1-2, to=1-3]
  \arrow["\lassociator", Rightarrow, from=1-3, to=1-4]
  \arrow["{\F \whiskerl \rassociator}"{description}, Rightarrow, from=1-4, to=2-1]
  \arrow["{\F \whiskerl (\eta \whiskerr \G)}"', Rightarrow, from=2-1, to=2-2]
  \arrow["{\F \whiskerl \lunitor}"', Rightarrow, from=2-2, to=2-3]
  \arrow["\varepsilon"', Rightarrow, from=2-3, to=2-4]
\end{tikzcd}
\]
Here the pseudotransformations $\lunitor$, $\linvunitor$, $\lassociator$, and $\rassociator$ are defined to be the identity 1-cell pointwise,
and the whiskering operations are defined in the usual way.
Since $\varepsilon'$ is a composition of adjoint equivalences,
it is one as well.
\end{construction}

\begin{problem}
\label{prob:reindex-biequiv}
Given univalent bicategories $\B_1$ and $\B_2$,
a biequivalence $(\F, \G, \eta, \varepsilon)$ where $\F : \B_1 \rightarrow \B_2$,
and a displayed bicategory $\D$ over $\B_2$ with trivial 2-cells,
to construct a displayed biequivalence $(\FF, \GG, \etaeta, \epsilonepsilon)$
over $(\F, \G, \eta, \varepsilon)$ where $\FF : \dmor{\reindex{\F}{\D}}{\D}{\F}$.
\end{problem}

\begin{construction}{\coqdocurl{Bicategories.DisplayedBicats.ReindexBiequivalence}{reindex_disp_is_biequivalence_univ_coh}}{prob:reindex-biequiv}
\label{constr:reindex-biequiv}
By \Cref{constr:coherent-biequiv},
we can assume without loss of generality
that we have invertible 2-cells
$\theta : \invcell{\F(\varepsilon'(x))}{\eta(\F(x))}$
for each $x : \B_1$.
As discussed before,
$\FF$ sends $\xx : \dob{\reindex{\F}{\D}}{x}$ to $\xx : \dob{\D}{\F(x)}$.
For $\GG$ we use that $\B_2$ is univalent
and that we have an adjoint equivalence $\eta(x) : \adjequiv{x}{\F(\G(x))}$,
meaning that every $\xx : \dob{\D}{x}$ gives rise to an object $\substTy{\eta(x)}{\xx} : \dob{\D}{\F(\G(x))}$ as follows.
\[
\begin{tikzcd}[column sep=large]
  {\substTy{\eta(x)}{\xx}} & \xx \\
  {\F(\G(x))} & x
  \arrow["{\substTyMor{\eta(x)}}", dashed, from=1-1, to=1-2]
  \arrow["{\eta(x)}"', from=2-1, to=2-2]
\end{tikzcd}
\]
We define $\etaeta(\xx) : \dmor{\substTy{\eta(x)}{\xx}}{\xx}{\eta(x)}$ to be $\substTyMor{\eta(x)}$.
For $\epsilonepsilon$,
we use that $\D$ can be equipped with a local isocleaving
since $\B_1$ is univalent.
We define $\epsilonepsilon(\xx)$ to be $\substTy{\theta(x)}{\gg}$.
\[
\begin{tikzcd}[row sep = 45pt, column sep = huge]
  {\substTy{\eta(\F(x))}{\xx}} & {\xx} \\
  {\F(\G(\F(x)))} & {\F(x)}
  \arrow[""{name=0, anchor=center, inner sep=0}, "{\substTyMor{\eta(\F(x))}}"', bend right=25, from=1-1, to=1-2, pos=.4]
  \arrow[""{name=1, anchor=center, inner sep=0}, "{\substTy{\theta(x)}{\substTyMor{\eta(\F(x))}}}", bend left=25, dashed, from=1-1, to=1-2, pos=.4]
  \arrow[""{name=2, anchor=center, inner sep=0}, "{\eta(\F(x))}"', bend right=25, from=2-1, to=2-2]
  \arrow[""{name=3, anchor=center, inner sep=0}, "{\F(\varepsilon(x))}", bend left=25, from=2-1, to=2-2]
  \arrow["{\theta(x)}", shorten <=2pt, shorten >=2pt, Rightarrow, from=3, to=2, shorten >=1ex]
  \arrow["{\substTyMor{\theta(x)}}", shorten <=2pt, shorten >=2pt, Rightarrow, from=1, to=0, shorten >=1ex]
\end{tikzcd}
\]
Since $\substTyMor{\eta(\F(x))}$ is a displayed adjoint equivalence,
$\epsilonepsilon(\xx)$ is so as well.
\end{construction}

\subsection{Type Formers in Universes}
Now we have the necessary tools to
prove the internal language theorem for elementary toposes with a natural numbers object and a universe.
Our starting point are the bicategories $\EltopNNOUniv$ and $\EltopNNOCCUniv$.
The objects of $\EltopNNOUniv$ are univalent elementary toposes with a natural numbers object
and a universe that is not required to be closed under any type former.
Objects of $\EltopNNOCCUniv$ are DFL comprehension categories with $\prod$-types and a universe object,
and that satisfy $\EltopNNOP$ fiberwise.

\begin{defi}
\label[defi]{def:topos-univ-obj}
We define the bicategory \conceptDef{$\EltopNNOUniv$}{Bicategories.ComprehensionCat.InternalLanguageTopos.ToposNatUniv}{bicat_of_univ_topos_with_NNO_univ}
to be $\total{(\EltopNNO \times \FinLimUniv)}$,
and we define \conceptDef{$\EltopNNOCCUniv$}{Bicategories.ComprehensionCat.InternalLanguageTopos.ToposNatUniv}{univ_topos_with_NNO_univ_language}
to be $\total{(\EltopNNOCC \times \DFLCompCatUniv)}$.
By \Cref{exa:prod-biequiv,exa:internal-language-topos,constr:universe-biequiv}
we have a biequivalence $(\FinLimToCompCatUnivTop, \CompCatToFinLimUnivTop, \FinLimCompCatUnitUnivTop, \FinLimCompCatCounitUnivTop)$
where $\FinLimToCompCatUnivTop : \EltopNNOUniv \rightarrow \EltopNNOCCUniv$.
\end{defi}

As discussed before,
categorical developments of universes are usually based on a notion of small maps,
and we can express that the universe is closed under certain type formers via suitable closure properties.
As an example, we consider $\sum$-types,
which are interpreted a category with finite limits via composition.
If we have a category $\C$ together with a predicate $\Pred$ on the morphisms of $\C$,
then saying that $\Pred$ is closed under composition expresses that our universe is closed under $\sum$-types.
Similarly, we can define when our universe is closed under other type formers,
like the subobject classifier and $\prod$-types.

Our approach is slightly different.
Since we are interested in models of type theory,
we want our categorical models to interpret expected type theoretic rules.
Such rules are expressed as operations on terms that are stable under substitution up to isomorphism.
However,
if one uses small maps,
then one only talks about a \textbf{property} on the maps.
Closure properties of small maps thus express properties of our universe
rather than giving operations.
Angiuli and Gratzer use the same ideas to give a notion of universe in categories with families~\cite[Structure 6.4.17]{angiuli2024principles},
although they require strict stability under substitution.

Depending on the context,
one uses a different collection of axioms for small maps~\cite{joyal:1995,vandenberg:2009a,awodey:2014},
and we follow Streicher~\cite{streicher:2005}.
Streicher considers the following closure properties for classes of small maps in an elementary topos:
\begin{itemize}
  \item small maps are closed under pullback;
  \item small maps are closed under composition;
  \item every monomorphism is small;
  \item the unique map from $\Nat$ to $\ECtx$ is small;
  \item the unique map from $\Omega$ to $\ECtx$ is small;
  \item small maps are closed under dependent products.
\end{itemize}
From a type theoretic point of view,
each of these conditions expresses that our universe contains a certain type former.
Since small maps are closed under composition and dependent products,
our universe is closed under $\sum$-types and $\prod$-types.
The universe also is closed under substitution,
because small maps are closed under pullback.
The universe also contains $\Nat$ and $\Omega$,
because the maps from these objects to $\ECtx$ is small.
Since monomorphisms correspond to internal propositions by \Cref{prop:hprop-mono},
our universe satisfies propositional resizing,
meaning that it contains every proposition.

In the remainder of this section,
we define when a universe contains $\Nat$ and $\Omega$,
and when it is closed under $\sum$-types, $\prod$-types, and propositional resizing.
We also study the relation between our development
and the closure properties of small maps that are used for classes of small maps as given by Streicher~\cite{streicher:2005}.
To do so,
we use that each universe with an elements map give rise to a class of small maps.

\begin{defi}
\label[defi]{def:small-map}
Let $\C$ be a univalent category with an object $\CatUniv$
and an elements map $\CatEl$ for $\CatUniv$.
A morphism $f : x \rightarrow y$ is said to be \conceptDef{small}{Bicategories.ComprehensionCat.Universes.SmallMaps}{is_small_map}
if there merely exists a morphism $a : y \rightarrow \CatUniv$
and an isomorphism $h : \iso{\CatEl(a)}{x}$ making the following diagram commute.
\[
\begin{tikzcd}
  {\CatEl(a)} && x \\
  & y
  \arrow["h", from=1-1, to=1-3]
  \arrow["{\CatElM(t)}"', from=1-1, to=2-2]
  \arrow["f", from=1-3, to=2-2]
\end{tikzcd}
\]
An object $x : \C$ is said to be \conceptDef{small}{Bicategories.ComprehensionCat.Universes.SmallMaps}{small_object}
if the map $\ECtxMap : x \rightarrow \ECtx$ is small.
\end{defi}

It is important to note that being small is a property for morphisms rather than structure,
as we do not require $a$ and $h$ to be chosen.
Small maps are closed under isomorphism.
One can prove this property using that $\C$ is univalent,
but for a concrete proof,
we use the following diagram.
\[
\begin{tikzcd}
  {\CatEl(a)} & x & {x'} \\
  & y
  \arrow["h", from=1-1, to=1-2]
  \arrow["{\CatElM(t)}"', from=1-1, to=2-2]
  \arrow["{h'}", from=1-2, to=1-3]
  \arrow["f"{description}, from=1-2, to=2-2]
  \arrow["{f'}", from=1-3, to=2-2]
\end{tikzcd}
\]
If $a$ and $h$ witness that $f$ is small,
then $a$ and $h \cdot h'$ witness that $f'$ is small.

To see why small maps are closed under pullbacks.
we use the following diagram.
\[
\begin{tikzcd}
  {\CatEl(s \cdot a)} & x & y & {\CatEl(a)} \\
  & \Delta & \Gamma && \CatUniv
  \arrow["\cong", from=1-1, to=1-2]
  \arrow["{\CatElM(s \cdot a)}"', from=1-1, to=2-2]
  \arrow["h", from=1-2, to=1-3]
  \arrow["g"{description}, from=1-2, to=2-2]
  \arrow["\lrcorner"{anchor=center, pos=0.125}, draw=none, from=1-2, to=2-3]
  \arrow["f"{description}, from=1-3, to=2-3]
  \arrow["\cong"', from=1-4, to=1-3]
  \arrow["{\CatElM(a)}", from=1-4, to=2-3]
  \arrow["s"', from=2-2, to=2-3]
  \arrow["a"', from=2-3, to=2-5]
\end{tikzcd}
\]
If we assume $f : y \rightarrow \Gamma$ to be small,
then we have a morphism $a : \Gamma \rightarrow \CatUniv$ together with an isomorphism $\iso{y}{\CatEl(a)}$.
Since $\CatEl(s \cdot a)$ is the pullback of $s$ and $\CatElM(a)$,
we must have an isomorphism $\iso{\CatEl(s \cdot a)}{x}$,
which proves that $g$ is indeed small.

\subsubsection{Natural Numbers}
\label{sec:nat-type-univ}
Now we define various displayed bicategories over $\EltopNNOUniv$,
whose objects over $(\C, \CatUniv)$ are structures that representing the closure of $\CatUniv$ under some type former.
We start with the natural numbers object.

\begin{defi}
\label[defi]{def:small-nno}
Let $\C$ be a univalent category with finite limits,
and let $\CatEl$ be an elements map for an object $\CatUniv : \C$.
Suppose that we have a parameterized natural numbers object $\Nat$ in $\C$.
A \conceptDef{code for the natural numbers}{Bicategories.ComprehensionCat.Universes.CatTypes.Constant}{pnno_in_cat_univ} for $\CatEl$
consists of a morphism $\NatCode : \ECtx \rightarrow \CatUniv$
together with an isomorphism $\NatIso : \iso{\CatEl(\NatCode)}{\Nat}$.
\end{defi}

Note that we do not require a stability property in \Cref{def:small-nno}.
This is because the morphism $\NatCode : \ECtx \rightarrow \CatUniv$ gives rise to a morphism $\NatCodeCtx : \Gamma \rightarrow \CatUniv$ by composition,
which is automatically stable under substitution.
In the language of small maps,
one expresses that the universe contains the natural numbers object
by saying that $\Nat$ is small.
If $\CatEl$ has a code for the numbers,
then this closure property follows.

\begin{propL}[\coqdocurl{Bicategories.ComprehensionCat.Universes.SmallMaps}{nno_small}]
\label[propL]{prop:small-nno}  
If $\CatEl$ has a code for the natural numbers,
then $\Nat$ is small.
\end{propL}

\begin{proof}
The desired code and isomorphism are given by $\NatCode$ and $\NatIso$ respectively.
\end{proof}

Recall that every functor $\F : \C_1 \rightarrow \C_2$ that preserves natural numbers objects,
comes with an isomorphism $\FNat{\F} : \iso{\F(\Nat_1)}{\Nat_2}$
where $\Nat_1$ and $\Nat_2$ are the natural numbers objects of $\C_1$ and $\C_2$ respectively.
Now we define the first of the desired displayed bicategories.

\begin{defi}
\label[defi]{def:small-nno-disp-bicat}
Suppose that the elements maps elements maps $\CatEl_1$ and $\CatEl_2$ for $\CatUniv_1 : \C_1$ and $\CatUniv_2 : \C_2$ have codes for the natural numbers.
Let $\F : \C_1 \rightarrow \C_2$ be a functor that preserves elements maps
and the natural numbers objects.
We say that $\F$ \conceptDef{preserves codes for natural numbers}{Bicategories.ComprehensionCat.Universes.CatTypes.Constant}{functor_preserves_pnno_in_cat_univ}
if the following diagrams commute.
\[
\begin{tikzcd}
  {\F \> \ECtx} & {\F(\CatUniv_1)} \\
  {\ECtx} & {\CatUniv_2}
  \arrow["{\F(\NatCode_1)}", from=1-1, to=1-2]
  \arrow["{\ECtxMap}"', from=1-1, to=2-1]
  \arrow["{\FunctorUniv{\F}}", from=1-2, to=2-2]
  \arrow["{\NatCode_2}"', from=2-1, to=2-2]
\end{tikzcd}
\]
\[
\begin{tikzcd}[column sep=large]
  {\F(\CatEl_1(\NatCode_1))} &&& {\F(\Nat_1)} \\
  {\CatEl_2(\F(\NatCode_1) \cdot \FunctorUniv{\F})} & {\CatEl_2(\ECtxMap \cdot \NatCode_2)} & {\CatEl_2(\NatCode_2)} & {\Nat_2}
  \arrow["{\F(\NatIso_1)}", from=1-1, to=1-4]
  \arrow["{\FunctorEl{\F}}"', from=1-1, to=2-1]
  \arrow["{\FNat{\F}}", from=1-4, to=2-4]
  \arrow["\CatElEq"', from=2-1, to=2-2]
  \arrow["\CatElSub"', from=2-2, to=2-3]
  \arrow["{\NatIso_2}"', from=2-3, to=2-4]
\end{tikzcd}
\]
We define the displayed bicategory \conceptDef{$\EltopNNOUnivNat$}{Bicategories.ComprehensionCat.InternalLanguageTopos.ToposNatUniv}{disp_bicat_topos_with_NNO_univ_nno_type}
over $\EltopNNOUniv$ whose
\begin{itemize}
  \item objects over a univalent elementary topos $\C$ with an elements map $\CatEl$
    are codes for the natural numbers;
  \item morphisms over $(\F, \FunctorEl{\F})$ are proofs that $\F$ preserves codes for natural numbers.
\end{itemize}
The type of 2-cells over an arbitrary natural transformation is the unit type.
\end{defi}

\subsubsection{Subobject Classifiers}
\label{sec:omega-type-univ}
We use the same ideas to define define when a universe contains the subobject classifier.
Specifically,
we require there to be a term in the empty context
whose associated type is isomorphic to the subobject classifier.
Smallness of $\Omega$ is proven in a similar way.

\begin{defi}
\label[defi]{def:small-omega}
Let $\C$ be a univalent category with finite limits,
and let $\CatEl$ be an elements map for an object $\CatUniv : \C$.
Suppose that we have a subobject classifier $\Omega$ in $\C$.
A \conceptDef{subobject classifier code}{Bicategories.ComprehensionCat.Universes.CatTypes.Constant}{subobject_classifier_in_cat_univ} for $\CatEl$
consists of a morphism $\SubObjCode : \ECtx \rightarrow \CatUniv$
together with an isomorphism $\SubObjIso : \iso{\CatEl(\SubObjCode)}{\Omega}$.
\end{defi}

\begin{propL}[\coqdocurl{Bicategories.ComprehensionCat.Universes.SmallMaps}{subobject_classifier_small}]
\label[propL]{prop:small-omega}
If $\CatEl$ has a subobject classifier code,
then $\Omega$ is small.
\end{propL}

\begin{proof}
The smallness of $\Omega$ follows from $\SubObjCode$ and $\SubObjIso$.
\end{proof}

To construct the desired displayed bicategory,
we define when functors preserve codes for subobject classifier.
Here we use that we have an isomorphism $\FOmega{\F} : \iso{\F(\Omega_1)}{\Omega_2}$ whenever $\F$ preserves subobject classifiers.

\begin{defi}
\label[defi]{def:small-omega-disp-bicat}
Suppose that the elements maps elements maps $\CatEl_1$ and $\CatEl_2$ for $\CatUniv_1 : \C_1$ and $\CatUniv_2 : \C_2$ have codes for subobject classifiers.
Let $\F : \C_1 \rightarrow \C_2$ be a functor that preserves elements maps
and subobject classifiers.
We say that $\F$ \conceptDef{preserves codes for subobject classifiers}{Bicategories.ComprehensionCat.Universes.CatTypes.Constant}{functor_preserves_subobject_classifier_in_cat_univ}
if the following diagrams commute.
\[
\begin{tikzcd}
  {\F \> \ECtx} & {\F(\CatUniv_1)} \\
  {\ECtx} & {\CatUniv_2}
  \arrow["{\F(\SubObjCode_1)}", from=1-1, to=1-2]
  \arrow["{\ECtxMap}"', from=1-1, to=2-1]
  \arrow["{\FunctorUniv{\F}}", from=1-2, to=2-2]
  \arrow["{\SubObjCode_2}"', from=2-1, to=2-2]
\end{tikzcd}
\]
\[
\begin{tikzcd}[column sep=large]
  {\F(\CatEl_1(\SubObjCode_1))} &&& {\F(\Omega_1)} \\
  {\CatEl_2(\F(\SubObjCode_1) \cdot \FunctorUniv{\F})} & {\CatEl_2(\ECtxMap \cdot \SubObjCode_2)} & {\CatEl_2(\SubObjCode_2)} & {\Omega_2}
  \arrow["{\F(\SubObjIso_1)}", from=1-1, to=1-4]
  \arrow["{\FunctorEl{\F}}"', from=1-1, to=2-1]
  \arrow["{\FOmega{\F}}", from=1-4, to=2-4]
  \arrow["\CatElEq"', from=2-1, to=2-2]
  \arrow["\CatElSub"', from=2-2, to=2-3]
  \arrow["{\SubObjIso_2}"', from=2-3, to=2-4]
\end{tikzcd}
\]
We define the displayed bicategory \conceptDef{$\EltopNNOUnivOmega$}{Bicategories.ComprehensionCat.InternalLanguageTopos.ToposNatUniv}{disp_bicat_topos_with_NNO_univ_subobj_classifier}
over $\EltopNNOUniv$ whose
\begin{itemize}
  \item objects over a univalent elementary topos $\C$ with an elements map $\CatEl$
    are codes for subobject classifiers;
  \item morphisms over $(\F, \FunctorEl{\F})$ are proofs that $\F$ preserves codes for subobject classifiers.
\end{itemize}
The type of 2-cells over an arbitrary natural transformation is the unit type.
\end{defi}

\subsubsection{Propositional Resizing}
\label{sec:resizing-univ}
Next we consider closure conditions for propositional resizing,
and here we meet a complication compared to natural numbers and subobject classifier.
Whereas it was not necessary to require any stability conditions for $\Nat$ and $\Omega$,
we need to consider suitable stability conditions for resizing codes.
The reason is that propositional resizing takes types in arbitrary contexts,
but for $\Nat$ and $\Omega$ it was sufficient to only consider a type in the empty context.

\begin{defi}
\label[defi]{def:resizing}
Let $\C$ be a univalent category with finite limits,
and let $\CatEl$ be an elements map for an object $\CatUniv : \C$.
A \conceptDef{family of resizing codes}{Bicategories.ComprehensionCat.Universes.CatTypes.Resizing}{cat_univ_stable_codes_resizing} for $\CatEl$
consists of a map that assigns to each monomorphism $m : x \rightarrow y$
a morphism $\ResizeCode(m) : y \rightarrow \CatUniv$
and an isomorphism $\ResizeIso(m) : \iso{\CatEl(\ResizeCode(m))}{x}$ making the following diagram commute.
\[
\begin{tikzcd}
  {\CatEl(\ResizeCode(m))} && x \\
  & y
  \arrow["{\ResizeIso(m)}", from=1-1, to=1-3]
  \arrow["{\CatElM(\ResizeCode(m))}"', from=1-1, to=2-2]
  \arrow["m", hook, from=1-3, to=2-2]
\end{tikzcd}
\]
We require the following diagram to commute for each $s : w \rightarrow y$ and $m : x \rightarrow y$
\[
\begin{tikzcd}
  w && y \\
  & \CatUniv
  \arrow["s", from=1-1, to=1-3]
  \arrow["{\ResizeCode(m')}"', from=1-1, to=2-2]
  \arrow["{\ResizeCode(m)}", from=1-3, to=2-2]
\end{tikzcd}
\]
where $m'$ is defined using the following pullback.
\[
\begin{tikzcd}
  {\substTy{s}{x}} & x \\
  w & y
  \arrow["{\substTyMor{s}}", from=1-1, to=1-2]
  \arrow["{m'}"', hook, from=1-1, to=2-1]
  \arrow["\lrcorner"{anchor=center, pos=0.125}, draw=none, from=1-1, to=2-2]
  \arrow["m", hook, from=1-2, to=2-2]
  \arrow["s"', from=2-1, to=2-2]
\end{tikzcd}
\]
\end{defi}

Let us make a couple of remarks on \Cref{def:resizing}.
A monomorphism $m : x \rightarrow y$ represents an internal proposition in context $y$ by \Cref{prop:hprop-mono},
and a morphism $\ResizeCode(m) : y \rightarrow \CatUniv$ represents a term of type $\CatUniv$ in context $y$.
Hence, a family of resizing codes assigns to each internal proposition in context $y$
a term of type $\CatUniv$ in context $y$
whose associated type is isomorphic to $x$.
Such a family thus interprets propositional resizing.

In addition, we only required a stability condition for $\ResizeCode(m)$ in \Cref{def:resizing},
because stability for $\ResizeIso(m)$ is derivable.
Specifically,
this stability requirement means that the upper square of the following diagram commutes.
\[
\begin{tikzcd}
  {\CatEl(\ResizeCode(m'))} & {\CatEl(s \cdot \ResizeCode(m))} \\
  & {\CatEl(\ResizeCode(m))} \\
  {\substTy{s}{x}} & x \\
  w & y
  \arrow["\CatElEq", from=1-1, to=1-2]
  \arrow["{\ResizeIso(m')}"', from=1-1, to=3-1]
  \arrow["{\CatElSub(s, \ResizeCode(m))}", from=1-2, to=2-2]
  \arrow["{\ResizeIso(m)}", from=2-2, to=3-2]
  \arrow["{\substTyMor{s}}", from=3-1, to=3-2]
  \arrow["{m'}"', hook, from=3-1, to=4-1]
  \arrow["m", hook, from=3-2, to=4-2]
  \arrow[""{name=0, anchor=center, inner sep=0}, "s"', from=4-1, to=4-2]
  \arrow["\lrcorner"{anchor=center, pos=0.125}, draw=none, from=3-1, to=0]
\end{tikzcd}
\]
We can prove that the upper square commutes,
because $m$ is a monomorphism
and $\ResizeIso(m)$ is an isomorphism in the slice category.
The desired stability property follows from the fact that the following diagram commutes.
\[
\begin{tikzcd}[column sep = 60pt, row sep=huge]
  {\CatEl(\ResizeCode(m'))} & {\CatEl(s \cdot \ResizeCode(m))} & {\CatEl(\ResizeCode(m))} \\
  & w & y
  \arrow["\CatElEq", from=1-1, to=1-2]
  \arrow["{\CatEl(\ResizeCode(m'))}"', from=1-1, to=2-2]
  \arrow["{\CatElSub(s, \ResizeCode(m))}", from=1-2, to=1-3]
  \arrow["{\CatEl(s \cdot \ResizeCode(m))}"{description}, from=1-2, to=2-2]
  \arrow["{\CatElM(\ResizeCode(m))}", from=1-3, to=2-3]
  \arrow["s"', from=2-2, to=2-3]
\end{tikzcd}
\]
Type theoretically,
monomorphisms correspond to internal propositions (\Cref{prop:hprop-mono}),
and our proof uses that all inhabitants of a proposition are equal.

From our notion of resizing codes,
one can deduce the analogous closure condition for small maps:
every monomorphism is small.

\begin{propL}[\coqdocurl{Bicategories.ComprehensionCat.Universes.SmallMaps}{all_monics_small}]
\label[propL]{prop:resizing}
If $\CatEl$ is equipped with a family of resizing codes,
then each monomorphism is small.
\end{propL}

\begin{proof}
The smallness of a monomorphism $m$ is witnessed by $\ResizeCode(m)$ and $\ResizeIso(m)$.
\end{proof}

With these notions in place,
we are ready to define the displayed bicategory of resizing codes for a universe.
Recall that every functor $\F$ that preserves finite limits,
also preserves monomorphisms.

\begin{defi}
\label[defi]{def:resizing-disp-bicat}
Suppose that the elements maps elements maps $\CatEl_1$ and $\CatEl_2$ for $\CatUniv_1 : \C_1$ and $\CatUniv_2 : \C_2$ have resizing codes,
and let $\F : \C_1 \rightarrow \C_2$ be a functor that preserves finite limits and elements maps.
We say that $\F$ \conceptDef{preserves resizing codes}{Bicategories.ComprehensionCat.Universes.CatTypes.Resizing}{functor_preserves_stable_resizing_codes}
if the following diagram commutes for every monomorphism $m : x \rightarrow y$.
\[
\begin{tikzcd}
  {\F(\Gamma)} && {\F(\CatUniv_1)} \\
  & {\CatUniv_2}
  \arrow["{\F(\ResizeCode(m))}", from=1-1, to=1-3]
  \arrow["{\ResizeCode(\F(m))}"', from=1-1, to=2-2]
  \arrow["{\FunctorUniv{\F}}", from=1-3, to=2-2]
\end{tikzcd}
\]
We define the displayed bicategory \conceptDef{$\EltopNNOUnivResize$}{Bicategories.ComprehensionCat.InternalLanguageTopos.ToposNatUniv}{disp_bicat_topos_with_NNO_univ_resizing}
over $\EltopNNOUniv$ whose
\begin{itemize}
  \item objects over a univalent elementary topos $\C$ with an elements map $\CatEl$
    are resizing codes;
  \item morphisms over $(\F, \FunctorEl{\F})$ are proofs that $\F$ preserves resizing codes.
\end{itemize}
The type of 2-cells over an arbitrary natural transformation is the unit type.
\end{defi}

Whereas in \Cref{def:small-nno-disp-bicat,def:small-omega-disp-bicat}
we require functors to both preserve the code and the isomorphism,
we do not do so for resizing codes in \Cref{def:resizing-disp-bicat}.
Again the reason is that the analogous preservation requirement would hold vacuously,
because all inhabitants of propositions are equal.

\subsubsection{$\sum$-types}
\label{sec:sum-type-univ}
Now we consider closure conditions for when a universe contains $\sum$-types.
Intuitively,
we want to say
whenever we have types $A$ and $B$ in $\CatUniv$
where $B$ might depend on $A$,
then their dependent sum also is in $\CatUniv$.
Since $\sum$-types are interpreted using composition,
this closure condition is translated as follows to the language of categories with finite limits.

\begin{defi}
\label[defi]{def:small-sigma}
Let $\C$ be a univalent category with finite limits
with an elements map $\CatEl$ for $\CatUniv : \C$.
A \conceptDef{family of $\sum$-codes}{Bicategories.ComprehensionCat.Universes.CatTypes.Sigma}{cat_univ_stable_codes_sigma} for $\CatEl$
is a map that assigns to each $a : y \rightarrow \CatUniv$ and $b : \CatEl(a) \rightarrow \CatUniv$
a morphism $\SigmaCode(a, b) : y \rightarrow \CatUniv$
and an isomorphism $\SigmaIso(a, b) : \iso{\CatEl(\SigmaCode(a, b))}{\CatEl(b)}$
making the left triangle in the diagram below commute.
\[
\begin{tikzcd}[row sep = large, column sep = huge]
  {\CatEl(\SigmaCode(a, b))} & {\CatEl(b)} \\
  & {\CatEl(a)} \\
  & y & \CatUniv
  \arrow["{\SigmaIso(a, b)}", dashed, from=1-1, to=1-2]
  \arrow["{\CatElM(\SigmaCode(a, b))}"', from=1-1, to=3-2]
  \arrow["{\CatElM(b)}"{description}, from=1-2, to=2-2]
  \arrow["{\CatElM(a)}"{description}, from=2-2, to=3-2]
  \arrow["{b}", bend left=30, from=2-2, to=3-3]
  \arrow["a", shift left, bend left = 10, from=3-2, to=3-3]
  \arrow["{\SigmaCode(a, b)}"', shift right, bend right = 10, dashed, from=3-2, to=3-3]
\end{tikzcd}
\]
We also require a stability condition for $\SigmaCode$ and $\SigmaIso$,
which says that for all morphisms $s : x \rightarrow y$,
$a : y \rightarrow \CatUniv$, and $b : \CatEl(a) \rightarrow \CatUniv$
we have that $s \cdot \SigmaCode(a, b) = \SigmaCode(s \cdot a, \CatElSub \cdot b)$,
which we display in the diagram below.
\[
\begin{tikzcd}[row sep = large, column sep = huge]
  {\CatEl(\CatElSub \cdot b)} & {\CatEl(b)} \\
  {\CatEl(s \cdot a)} & {\CatEl(a)} \\
  x & y & \CatUniv
  \arrow["\CatElSub", from=1-1, to=1-2]
  \arrow["{\CatElM(\CatElSub \cdot b)}"', from=1-1, to=2-1]
  \arrow["{\CatElM(b)}"', from=1-2, to=2-2]
  \arrow["\CatElSub"{description}, from=2-1, to=2-2]
  \arrow["{\CatElM(s \cdot a)}"', from=2-1, to=3-1]
  \arrow["\lrcorner"{anchor=center, pos=0.125}, draw=none, from=1-1, to=2-2]
  \arrow["\lrcorner"{anchor=center, pos=0.125}, draw=none, from=2-1, to=3-2]
  \arrow["{\CatElM(a)}"', from=2-2, to=3-2]
  \arrow["b", shorten >=1ex, bend left=30, from=2-2, to=3-3]
  \arrow["s"', from=3-1, to=3-2]
  \arrow["{\SigmaCode(s \cdot a, \CatElSub \cdot b)}"', shorten >=0.5ex, bend right=40, from=3-1, to=3-3]
  \arrow["a", shorten >=0.3ex, shift left, bend left=10, from=3-2, to=3-3]
  \arrow["{\SigmaCode(a, b)}"', shorten >=0.3ex, shift right, bend right=10, from=3-2, to=3-3]
\end{tikzcd}
\]
We also require the following diagram to commute.
\[
\begin{tikzcd}[column sep = 60pt]
  {\CatEl(s \cdot \SigmaCode(a, b))} && {\CatEl(\SigmaCode(a, b))} \\
  {\CatEl(\SigmaCode(s \cdot a, \CatElSub \cdot b))} & {\CatEl(\CatElSub \cdot b)} & {\CatEl(b)}
  \arrow["\CatElSub", from=1-1, to=1-3]
  \arrow["\CatElEq"', from=1-1, to=2-1]
  \arrow["{\SigmaIso(a, b)}", from=1-3, to=2-3]
  \arrow["{\SigmaIso(s \cdot a, \CatElSub \cdot b)}"', from=2-1, to=2-2]
  \arrow["\CatElSub"', from=2-2, to=2-3]
\end{tikzcd}
\]
\end{defi}

Whereas stability for $\ResizeIso$ was derivable in \Cref{def:resizing,def:small-sigma},
we cannot derive the second coherence condition in \Cref{def:small-sigma}.
Intuitively,
the reason for that,
is that $\sum$-types are not necessarily propositions.

If a universe is equipped with a family of $\sum$-codes,
then its associated class of small maps satisfy the expected closure condition.

\begin{propL}[\coqdocurl{Bicategories.ComprehensionCat.Universes.SmallMaps}{all_composition_small}]
\label[propL]{prop:small-sigma}
If a universe is equipped with a family of $\sum$-codes,
then the composition of small maps is again small.
\end{propL}

\begin{proof}
Suppose that $f : x \rightarrow y$ and $g : y \rightarrow z$ are small.
Then we have morphisms $c_f : y \rightarrow \CatUniv$ and $c_g : z \rightarrow \CatUniv$
and isomorphisms $h_f : \iso{\CatEl(c_f)}{x}$ and $h_g : \iso{\CatEl(c_g)}{y}$
making the following diagrams commute.
\[
\begin{tikzcd}
  & {\CatEl(c_f)} && x \\
  {\CatEl(c_g)} && y \\
  & z
  \arrow["{h_f}", from=1-2, to=1-4]
  \arrow["{\CatElM(c_f)}"{description}, from=1-2, to=2-3]
  \arrow["f", from=1-4, to=2-3]
  \arrow["{h_g}"{description}, from=2-1, to=2-3]
  \arrow["{\CatElM(c_g)}"', from=2-1, to=3-2]
  \arrow["g", from=2-3, to=3-2]
\end{tikzcd}
\]
To see that $f \cdot g$ is small,
we use the following diagram.
\[
\begin{tikzcd}
  {\CatEl(\SigmaCode(c_g, h_g \cdot c_f))} & {\CatEl(h_g \cdot c_f)} && {\CatEl(c_f)} && x \\
  && {\CatEl(c_g)} && y \\
  &&& z && \CatUniv
  \arrow["{\SigmaIso}", from=1-1, to=1-2] 
  \arrow["{\CatElM(\SigmaCode(c_g, h_g \cdot c_f))}"', bend right=20, from=1-1, to=3-4]
  \arrow["\CatElSub", from=1-2, to=1-4]
  \arrow["{\CatElM(h_g \cdot c_f)}"{description}, from=1-2, to=2-3]
  \arrow["{h_f}", from=1-4, to=1-6]
  \arrow["{\CatElM(c_f)}"{description}, from=1-4, to=2-5]
  \arrow["f", from=1-6, to=2-5]
  \arrow["{h_g}"{description}, from=2-3, to=2-5]
  \arrow["{\CatElM(c_g)}"{description}, from=2-3, to=3-4]
  \arrow["g"{description}, from=2-5, to=3-4]
  \arrow["{c_f}", from=2-5, to=3-6]
  \arrow["{c_g}", from=3-4, to=3-6]
  \arrow["{\SigmaCode(c_g, h_g \cdot c_f)}"', shift right=1, bend right=20, from=3-4, to=3-6]
  \arrow["\lrcorner"{anchor=center, pos=0.06}, draw=none, from=1-2, to=2-5]
\end{tikzcd}
\]
The desired code for $f \cdot g$ is $\SigmaCode(c_g, h_g \cdot c_f)$.
We also need to show $\CatEl(\SigmaCode(c_g, h_g \cdot c_f))$ is isomorphic to $x$,
and for that we prove that $\SigmaIso \cdot \CatElSub \cdot h_f$ is an isomorphism.
Note that $\SigmaIso$ and $h_f$ are isomorphisms by assumption.
To see why $\CatElSub$ is an isomorphism,
we use that it is the pullback of $h_g$
and that pullbacks of isomorphisms are again isomorphisms.
\end{proof}

To construct the displayed bicategory of $\sum$-codes,
we define when functors preserve $\sum$-codes.
Here we use the same ideas as for \Cref{def:small-nno-disp-bicat,def:small-omega-disp-bicat}:
functors need to preserve both $\SigmaCode$ and $\SigmaIso$.

\begin{defi}
\label[defi]{def:sigma-code-disp-bicat}
Suppose that we have elements maps $\CatEl_1$ and $\CatEl_2$ for $\CatUniv_1 : \C_1$ and $\CatUniv_2 : \C_2$ respectively,
and a functor $\F : \C_1 \rightarrow \C_2$
with an elements preservation family $\FunctorEl{F}$.
Let $\SigmaCode_1$ and $\SigmaCode_2$ be $\sum$-codes for $\CatEl_1$ and $\CatEl_2$ respectively.
We say that $\F$ \conceptDef{preserves $\sum$-codes}{Bicategories.ComprehensionCat.Universes.CatTypes.Sigma}{functor_preserves_stable_codes_sigma}
whenever the following diagrams commute for all $a : y \rightarrow \CatUniv_1$ and $b : \CatEl(a) \rightarrow \CatUniv_1$.
\[
\begin{tikzcd}
  {\F(y)} && {\F(\CatUniv_1)} \\
  & {\CatUniv_2}
  \arrow["{\F(\SigmaCode_1(a,b))}", from=1-1, to=1-3]
  \arrow["{\SigmaCode_2(a',b')}"', from=1-1, to=2-2]
  \arrow["{\FunctorUniv{\F}}", from=1-3, to=2-2]
\end{tikzcd}
\]
\[
\begin{tikzcd}
  {\F(\CatEl_1(\SigmaCode_1(a, b)))} &&& {\F(\CatEl_1(b))} \\
  {\CatEl_2(\F(\SigmaCode_1(a, b)) \cdot \FunctorUniv{\F})} \\
  {\CatEl_2(\SigmaCode_2(a', b'))} & {\CatEl_2(b')} & {\CatEl_2(\FunctorEl{\F}^{-1} \cdot (\F(b) \cdot \FunctorUniv{\F}))} & {\CatEl_2(\F(b) \cdot \FunctorUniv{\F})}
  \arrow["{{\F(\SigmaIso_1(a, b))}}", from=1-1, to=1-4]
  \arrow["{{\FunctorEl{\F}}}"', from=1-1, to=2-1]
  \arrow["{{\FunctorEl{\F}}}", from=1-4, to=3-4]
  \arrow["\CatElEq"', from=2-1, to=3-1]
  \arrow["{{\F(\SigmaIso_2(a', b'))}}"', from=3-1, to=3-2]
  \arrow["\CatElEq"', from=3-2, to=3-3]
  \arrow["\CatElSub"', from=3-3, to=3-4]
\end{tikzcd}
\]
Here we define $a'$ to be $\F(a) \cdot \FunctorUniv{\F}$
and $b'$ to be $(\FunctorEl{\F}^{-1} \cdot \F(b)) \cdot \FunctorUniv{\F}$,
as displayed below.
\[
\begin{tikzcd}[column sep = huge]
  {\CatEl(\F(a) \cdot \FunctorUniv{\F})} && {\F(\CatEl(a))} \\
  & {\F(y)} && {\F(\CatUniv_1)} & {\CatUniv_2}
  \arrow["{\CatElM(\F(a) \cdot \FunctorUniv{\F})}"', from=1-1, to=2-2]
  \arrow["{\FunctorEl{\F}^{-1}}", from=1-1, to=1-3]
  \arrow["{\F(\CatElM(a))}"{description}, from=1-3, to=2-2]
  \arrow["{\F(b)}", from=1-3, to=2-4]
  \arrow["{\F(a)}"', from=2-2, to=2-4]
  \arrow["{\SigmaCode_2(a',b')}"', bend right=20, from=2-2, to=2-5]
  \arrow["{\FunctorUniv{\F}}"', from=2-4, to=2-5]
\end{tikzcd}
\]
We define the displayed bicategory \conceptDef{$\EltopNNOUnivSigma$}{Bicategories.ComprehensionCat.InternalLanguageTopos.ToposNatUniv}{disp_bicat_topos_with_NNO_univ_sigma}
over $\EltopNNOUniv$ whose
\begin{itemize}
  \item objects over a univalent elementary topos $\C$ with an elements map $\CatEl$
    are $\sum$-codes for $\CatEl$;
  \item morphisms over $(\F, \FunctorEl{\F})$ are proofs that $\F$ preserves $\sum$-codes.
\end{itemize}
The type of 2-cells over an arbitrary natural transformation is the unit type.
\end{defi}

\subsubsection{$\prod$-types}
\label{sec:prod-type-univ}
The last type former that we consider,
are $\prod$-types,
and our next goal is to define when a universe is closed under dependent products.
To do so,
we use similar ideas as for $\sum$-types:
we say that we have an operation on morphisms to the universe,
and we require this operation to be stable under substitution up to isomorphism.
Functors are required to preserve the operation and the isomorphism.
In the following,
we use some additional notation related to locally Cartesian closed categories.
Specifically, we need that dependent products are functorial in both arguments
and that they are stable under substitution.

\begin{nota}
Let $\C$ be a locally Cartesian closed category.
Given a morphism $f : y \rightarrow z$, we write $\prod_f : \slice{\C}{y} \rightarrow \slice{\C}{z}$
for the right adjoint of $\functorfiber{f} : \slice{\C}{z} \rightarrow \slice{\C}{y}$,
which we call the dependent product.
Specifically,
if we have morphisms $f : y \rightarrow z$ and $g : x \rightarrow y$,
then $\prod_f g : \slice{\C}{y}$ is a morphism,
which we write as $\prodMor{f}{g} : \prod_f g \rightarrow z$.

The dependent product operation is functorial in both arguments.
Suppose that we have morphisms $h_x$ and $h_y$ as in the following commutative diagram.
\[
\begin{tikzcd}[row sep = large]
  {y_1} && {\substTy{h_x}{y_1}} & {y_2} \\
  {x_1} && {x_2} \\
  & z
  \arrow["{g_1}"', from=1-1, to=2-1]
  \arrow["{\substTyMor{h_x}}"', from=1-3, to=1-1]
  \arrow["{g_1}"{description}, from=1-3, to=2-3]
  \arrow["{h_y}", from=1-3, to=1-4]
  \arrow["{g_2}", from=1-4, to=2-3]
  \arrow["{f_1}"', from=2-1, to=3-2]
  \arrow[""{name=0, anchor=center, inner sep=0}, "{h_x}"{description}, from=2-3, to=2-1]
  \arrow["{f_2}", from=2-3, to=3-2]
  \arrow["\lrcorner"{anchor=center, pos=0.125, rotate=-90}, draw=none, from=1-3, to=0]
\end{tikzcd}
\]
Then we have a morphism $\prod_{h_x} h_y : \prod_{f_1} g_1 \rightarrow \prod_{f_2} g_2$.
This morphism is defined using the unit and counit of the adjunction.
If both $h_x$ and $h_y$ are isomorphisms,
then $\prod_{h_x} h_y$ is an isomorphism as well.

Dependent products also are stable under pullback.
Stability means that
whenever we have morphisms $s : z' \rightarrow z$,
$f : y \rightarrow z$ and
$g : x \rightarrow y$,
then we have an isomorphism $\prodStable{s}{f}{g} : \iso{\substTy{s}{\prod_f g}}{\prod_{f'} g'}$
where $f'$ and $g'$ are defined using the following pullbacks.
\[
\begin{tikzcd}
  {\substTy{\substTyMor{s}}{g}} & x \\
  {\substTy{s}{f}} & y \\
  {z'} & z
  \arrow["{\substTyMor{\substTyMor{s}}}", from=1-1, to=1-2]
  \arrow["{g'}"', from=1-1, to=2-1]
  \arrow["\lrcorner"{anchor=center, pos=0.125}, draw=none, from=1-1, to=2-2]
  \arrow["g", from=1-2, to=2-2]
  \arrow["{\substTyMor{s}}", from=2-1, to=2-2]
  \arrow["{f'}"', from=2-1, to=3-1]
  \arrow["\lrcorner"{anchor=center, pos=0.125}, draw=none, from=2-1, to=3-2]
  \arrow["f", from=2-2, to=3-2]
  \arrow["s"', from=3-1, to=3-2]
\end{tikzcd}
\]
The desired isomorphism is constructed in \Cref{constr:disp-psfunctor-to-compcat}.
\end{nota}

\begin{defi}
\label[defi]{def:small-pi}
Let $\C$ be a univalent locally Cartesian closed category
with an elements map $\CatEl$ for $\CatUniv : \C$.
A \conceptDef{family of $\prod$-codes}{Bicategories.ComprehensionCat.Universes.CatTypes.PiTypesBasics}{cat_univ_stable_codes_pi} for $\CatEl$
is a map that assigns to each $a : y \rightarrow \CatUniv$ and $b : \CatEl(a) \rightarrow \CatUniv$
a morphism $\PiCode(a, b) : y \rightarrow \CatUniv$
together with
an isomorphism $\PiIso(a, b) : \iso{\CatEl(\PiCode(a, b))}{\prod_{\CatEl(a)} \CatEl(b)}$
that makes the following diagram commute.
\[
\begin{tikzcd}
  {\CatEl(\PiCode(a, b))} && {\prod_{\CatEl(a)} \CatEl(b)} \\
  & y
  \arrow["{\PiIso(a, b)}", from=1-1, to=1-3]
  \arrow["{\CatElM(\PiCode(a, b))}"', from=1-1, to=2-2]
  \arrow["{\prodMor{a}{b}}", from=1-3, to=2-2]
\end{tikzcd}
\]
We also require a stability condition for $\PiCode$ and $\PiIso$,
which says that for all morphisms $s : x \rightarrow y$,
$a : y \rightarrow \CatUniv$, and $b : \CatEl(a) \rightarrow \CatUniv$
we have that $s \cdot \PiCode(a, b) = \PiCode(s \cdot a, \CatElSub \cdot b)$,
which we display in the diagram below.
\[
\begin{tikzcd}[row sep = large, column sep = huge]
  {\CatEl(\CatElSub \cdot b)} & {\CatEl(b)} \\
  {\CatEl(s \cdot a)} & {\CatEl(a)} \\
  x & y & \CatUniv
  \arrow["\CatElSub", from=1-1, to=1-2]
  \arrow["{\CatElM(\CatElSub \cdot b)}"', from=1-1, to=2-1]
  \arrow["{\CatElM(b)}"', from=1-2, to=2-2]
  \arrow["\CatElSub", from=2-1, to=2-2]
  \arrow["{\CatElM(s \cdot a)}"', from=2-1, to=3-1]
  \arrow["\lrcorner"{anchor=center, pos=0.125}, draw=none, from=1-1, to=2-2]
  \arrow["\lrcorner"{anchor=center, pos=0.125}, draw=none, from=2-1, to=3-2]
  \arrow["{\CatElM(a)}"', from=2-2, to=3-2]
  \arrow["b", shorten >=1ex, bend left=30, from=2-2, to=3-3]
  \arrow["s"', from=3-1, to=3-2]
  \arrow["{\PiCode(s \cdot a, \CatElSub \cdot b)}"', shorten >=0.5ex, bend right=40, from=3-1, to=3-3]
  \arrow["a", shorten >=0.3ex, shift left, bend left=10, from=3-2, to=3-3]
  \arrow["{\PiCode(a, b)}"', shorten >=0.3ex, shift right, bend right=10, from=3-2, to=3-3]
\end{tikzcd}
\]
We required $\CatEl(s \cdot \PiCode(a, b))$ to be a pullback of $s$ and $\CatElM(\PiCode(a, b))$,
so we have an isomorphism $h : \iso{\substTy{s}{\CatEl(\PiCode(a, b))}}{\CatEl(s \cdot \PiCode(a, b))}$.
We also require the following diagram to commute.
\[
\begin{tikzcd}[column sep = 60pt]
  {\substTy{s}{\CatEl(\PiCode(a, b))}} && {\substTy{s}{\prod_{\CatEl(a)} \CatEl(b)}} \\
  {\CatEl(s \cdot \PiCode(a, b))} \\
  {\CatEl(\PiCode(s \cdot a, \CatElSub \cdot b))} & {\prod_{\CatEl(s \cdot a)} (\CatEl(\CatElSub \cdot b))} & {\prod_{a'} b'}
  \arrow["{\substTy{s}{\PiIso(a, b)}}", from=1-1, to=1-3]
  \arrow["{h}"', from=1-1, to=2-1]
  \arrow["{\prodStable{s}{\CatEl(a)}{\CatEl(b)}}", from=1-3, to=3-3]
  \arrow["\CatElEq"', from=2-1, to=3-1]
  \arrow["{\PiIso(s \cdot a, \CatElSub \cdot b)}"', from=3-1, to=3-2]
  \arrow["{\prod_{f} g}"', from=3-2, to=3-3]
\end{tikzcd}
\]
Here the isomorphisms $f$ and $g$
and the morphisms $a'$ and $b'$ are defined using the following pullback squares.
\[
\begin{tikzcd}
  {\CatEl(\CatElSub \cdot b)} && {\substTy{\substTyMor{s}}{\CatEl(b)}} & {\CatEl(b)} \\
  {\CatEl(s \cdot a)} && {\substTy{s}{\CatEl(a)}} & {\CatEl(a)} \\
  & x && y
  \arrow["g", from=1-1, to=1-3]
  \arrow["{\CatElM(\CatElSub \cdot b)}"', from=1-1, to=2-1]
  \arrow["{\substTyMor{\substTyMor{s}}}", from=1-3, to=1-4]
  \arrow["{b'}"{description}, from=1-3, to=2-3]
  \arrow["{\CatElM(b)}", from=1-4, to=2-4]
  \arrow["f"{description}, from=2-1, to=2-3]
  \arrow["{\CatElM(s \cdot a)}"', from=2-1, to=3-2]
  \arrow[""{name=0, anchor=center, inner sep=0}, "{\substTyMor{s}}", from=2-3, to=2-4]
  \arrow["{a'}"{description}, from=2-3, to=3-2]
  \arrow["\lrcorner"{anchor=center, pos=0.125}, draw=none, from=2-3, to=3-4]
  \arrow["{\CatElM(a)}", from=2-4, to=3-4]
  \arrow["s"', from=3-2, to=3-4]
  \arrow["\lrcorner"{anchor=center, pos=0.125}, draw=none, from=1-3, to=0]
\end{tikzcd}
\]
\end{defi}

Again the analogous closure condition for small maps follows from a family of $\prod$-codes.

\begin{propL}[\coqdocurl{Bicategories.ComprehensionCat.Universes.SmallMaps}{pi_types_small}]
\label[propL]{prop:small-pi}
If the morphisms $f : y \rightarrow z$ and $g : x \rightarrow y$ are small,
then $\prodMor{f}{g} : \prod_f g \rightarrow z$ is small as well. 
\end{propL}

\begin{proof}
Proven same way as \Cref{prop:small-sigma}
using $\PiCode$ and $\PiIso$ instead of $\SigmaCode$ and $\SigmaIso$.
\end{proof}

To define the displayed bicategory of $\prod$-codes,
we define when a functor preserves such codes.
Whenever we have a functor $\F$ between locally Cartesian closed categories $\C_1$ and $\C_2$,
then,
by definition,
we have an isomorphism $\functorProd{\F}(f, g) : \iso{\F(\prod_f g}{\prod_{\F(f)} \F(g)}$
for each $f : y \rightarrow z$ and $g : x \rightarrow y$.

\begin{defi}
\label[defi]{def:pi-code-disp-bicat}
Suppose that we have elements maps $\CatEl_1$ and $\CatEl_2$ for $\CatUniv_1 : \C_1$ and $\CatUniv_2 : \C_2$ respectively,
where $\C_1$ and $\C_2$ are univalent locally Cartesian closed categories.
Let $\F : \C_1 \rightarrow \C_2$ be a functor that preserves exponentials
and let $\FunctorEl{F}$ be an elements preservation family for $\F$.
Let $\PiCode_1$ and $\PiCode_2$ be $\prod$-codes for $\CatEl_1$ and $\CatEl_2$ respectively.
We say that $\F$ \conceptDef{preserves $\prod$-codes}{Bicategories.ComprehensionCat.Universes.CatTypes.PiTypesBasics}{functor_preserves_stable_codes_pi}
whenever the following diagrams commute for all $a : y \rightarrow \CatUniv_1$ and $b : \CatEl(a) \rightarrow \CatUniv_1$.
\[
\begin{tikzcd}
  {\F(y)} && {\F(\CatUniv_1)} \\
  & {\CatUniv_2}
  \arrow["{\F(\PiCode_1(a,b))}", from=1-1, to=1-3]
  \arrow["{\PiCode_2(a',b')}"', from=1-1, to=2-2]
  \arrow["{\FunctorUniv{\F}}", from=1-3, to=2-2]
\end{tikzcd}
\]
\[
\begin{tikzcd}[column sep = huge]
  {\F(\CatEl_1(\PiCode_1(a, b)))} && {\F(\prod_{\CatEl_1(a)} \CatEl_1(b))} \\
  {\CatEl_2(\PiCode_1(a, b) \cdot \FunctorUniv{\F}))} \\
  {\CatEl_2(\PiCode_2(a', b')} & {\prod_{\CatEl_2(a')} \CatEl_2(b')} & {\prod_{\F(\CatEl_1(a))} \F(\CatEl_1(b)))}
  \arrow["{\F(\PiIso_1(a, b))}", from=1-1, to=1-3]
  \arrow["{\FunctorEl{\F}}"', from=1-1, to=2-1]
  \arrow["{\functorProd{\F}(\CatEl_1(a), \CatEl_1(b))}", from=1-3, to=3-3]
  \arrow["\CatElEq"', from=2-1, to=3-1]
  \arrow["{\PiIso(a',b')}"', from=3-1, to=3-2]
  \arrow["\prod_{\FunctorEl{\F}(a)} g"', from=3-2, to=3-3]
\end{tikzcd}
\]
Here we define $a'$ to be $\F(a) \cdot \FunctorUniv{\F}$
and $b'$ to be $(\FunctorEl{\F}^{-1} \cdot \F(b)) \cdot \FunctorUniv{\F}$
as displayed below.
\[
\begin{tikzcd}[column sep = huge]
  {\CatEl(\F(a) \cdot \FunctorUniv{\F})} && {\F(\CatEl(a))} \\
  & {\F(y)} && {\F(\CatUniv_1)} & {\CatUniv_2}
  \arrow["{\CatElM(\F(a) \cdot \FunctorUniv{\F})}"', from=1-1, to=2-2]
  \arrow["{\FunctorEl{\F}^{-1}}", from=1-1, to=1-3]
  \arrow["{\F(\CatElM(a))}"{description}, from=1-3, to=2-2]
  \arrow["{\F(b)}", from=1-3, to=2-4]
  \arrow["{\F(a)}"', from=2-2, to=2-4]
  \arrow["{\PiCode_2(a',b')}"', bend right=20, from=2-2, to=2-5]
  \arrow["{\FunctorUniv{\F}}"', from=2-4, to=2-5]
\end{tikzcd}
\]
The morphism $g : \substTy{\FunctorEl{\F}}{\CatEl_2(b')} \rightarrow \CatEl_2(\F(b))$ is defined as the following composition.
\[
\begin{tikzcd}
  {\substTy{\FunctorEl{\F}}{\CatEl_2(b')}} & {\CatEl_2(b')} & {\CatEl_2(\FunctorEl{\F}^{-1} \cdot (\F(b) \cdot \FunctorUniv{\F}))} & {\CatEl_2(\F(b) \cdot \FunctorUniv{\F})} & {\CatEl_2(\F(b))} \\
  {\F(\CatEl_1(a))} & {\CatEl_2(a')}
  \arrow["{{\substTyMor{\FunctorEl{\F}}}}", from=1-1, to=1-2]
  \arrow[from=1-1, to=2-1]
  \arrow["\lrcorner"{anchor=center, pos=0.125}, draw=none, from=1-1, to=2-2]
  \arrow["\CatElEq", from=1-2, to=1-3]
  \arrow["{\CatElM_2(b')}", from=1-2, to=2-2]
  \arrow["\CatElSub", from=1-3, to=1-4]
  \arrow["{{\FunctorEl{\F}(b)^{-1}}}", from=1-4, to=1-5]
  \arrow["{\FunctorEl{\F}}"', from=2-1, to=2-2]
\end{tikzcd}
\]

We define the displayed bicategory \conceptDef{$\EltopNNOUnivProd$}{Bicategories.ComprehensionCat.InternalLanguageTopos.ToposNatUniv}{disp_bicat_topos_with_NNO_univ_pi}
over $\EltopNNOUniv$ whose
\begin{itemize}
  \item objects over a univalent elementary topos $\C$ with an elements map $\CatEl$
    are $\prod$-codes for $\CatEl$;
  \item morphisms over $(\F, \FunctorEl{\F})$ are proofs that $\F$ preserves $\prod$-codes.
\end{itemize}
The type of 2-cells over an arbitrary natural transformation is the unit type.
\end{defi}

\subsubsection{Topos Universes}
Now finally everything is in place for the biequivalence that we aimed to construct in this section.
We start by collecting the displayed bicategories that we constructed so far
to obtain the displayed bicategories of topos universes.

\begin{defi}
\label{def:topos-universe-bicat}
We define the displayed bicategory
\conceptDef{$\EltopNNOTopUnivD$}{Bicategories.ComprehensionCat.InternalLanguageTopos.ToposNatUniv}{disp_bicat_topos_with_NNO_univ_topos_univ}
over $\EltopNNOUniv$
to be
\[
\EltopNNOUnivNat \times \EltopNNOUnivOmega \times \EltopNNOUnivResize \times \EltopNNOUnivSigma \times \EltopNNOUnivProd
\]
The bicategory
\conceptDef{$\EltopNNOTopUniv$}{Bicategories.ComprehensionCat.InternalLanguageTopos.ToposNatUniv}{bicat_topos_with_NNO_univ_topos_univ}
is defined to be $\total{\EltopNNOTopUnivD}$.
In addition, we define
the displayed bicategory
\conceptDef{$\EltopNNOCCTopUnivD$}{Bicategories.ComprehensionCat.InternalLanguageTopos.ToposNatUniv}{disp_bicat_topos_with_NNO_univ_topos_univ}
over $\EltopNNOCC$ to be
$\reindex{\CompCatToFinLimUnivTop}{\EltopNNOUniv}$,
and we define $\EltopNNOCCTopUniv$ to be $\total{\EltopNNOCCTopUnivD}$.
\end{defi}

\begin{propL}[{\coqdocurl{Bicategories.ComprehensionCat.InternalLanguageTopos.ToposNatUniv}{disp_univalent_2_disp_bicat_topos_with_NNO_univ_topos_univ}}]
The displayed bicategory $\EltopNNOTopUnivD$ is univalent.
\end{propL}

\begin{proof}
It suffices to show that each of the displayed bicategories
$\EltopNNOUnivNat$, $\EltopNNOUnivOmega$, $\EltopNNOUnivResize$, $\EltopNNOUnivSigma$, and $\EltopNNOUnivProd$.
For $\EltopNNOUnivNat$,
it suffices to show that each proof that the identity preserves codes for natural numbers
from $(\NatCode_1, \NatIso_1)$ to $(\NatCode_2, \NatIso_2)$ gives rise to an equality
between $(\NatCode_1, \NatIso_1)$ and $(\NatCode_2, \NatIso_2)$,
and details are given in the formalization.
The univalence of the other displayed bicategories is proven in the same way.
\end{proof}

By \Cref{prop:reindex-disp-bicat-univalent},
it follows that $\EltopNNOCCTopUnivD$ is univalent as well.
Let us state what the objects of the bicategory $\EltopNNOCC$ are.
An object is given by a DFL comprehension category $\compfunctor : \D \rightarrow \ArrD{\C}$
that satisfies the local property $\EltopNNO$ fiberwise
and with a universe object $\CatUniv : \dobP{\D}{\ECtx}$
and elements map $\CatEl$.
This universe object is required to satisfy various closure conditions,
and we explicate the closure condition for $\Nat$.
Since $\compfunctor$ is a fiberwise equivalence by \Cref{prop:comprehension-eso},
each slice category $\slice{\C}{\Gamma}$ has a natural numbers object,
and we write $\Nat$ for the natural numbers object of $\slice{\C}{\ECtx}$.
By definition,
we have a morphism $\NatCode : \ECtx \rightarrow \CatUniv$
and an isomorphism $\NatIso : \iso{\CatEl(\NatCode)}{\Nat}$.
These morphism give us a term $\NatCodeTm : \Tms{\ECtx}{\CatUniv}$
together with an isomorphism $\NatIsoTm$ between $\CatEl{\NatCodeTm}$
and the fiberwise natural numbers object in $\D$,
which proves the expected derivation rules.
For the other type formers,
we can give a similar argument.

We finish this section with the desired biequivalence.

\begin{problem}
\label{prob:topos-universe-biequiv}
To construct a biequivalence $\biequiv{\EltopNNOTopUniv}{\EltopNNOCCTopUniv}$.
\end{problem}

\begin{construction}{\coqdocurl{Bicategories.ComprehensionCat.InternalLanguageTopos.ToposNatUniv}{internal_language_univ_topos_with_NNO_topos_univ}}{prob:topos-universe-biequiv}
\label{constr:topos-universe-biequiv}
By \Cref{constr:coherent-biequiv,def:topos-univ-obj}.
\end{construction}

\section{Related Work}
\label{sec:related-work}
In this section, we discuss related work,
and we start by recalling how our work compares to that of Clairambault and Dybjer~\cite{clairambault:2014}.
Clairambault and Dybjer~\cite[Theorem 6.1]{clairambault:2014} constructed a biequivalence between
the bicategory of categories with finite limits
and the bicategory of democratic CwFs that support extensional identity types and $\sum$-types,
and a biequivalence between
the bicategory of locally Cartesian closed categories
and the bicategory of democratic CwFs that support extensional identity types, $\sum$-types and $\prod$-types.
This paper is very much in the spirit of Clairambault and Dybjer,
and we give analogous statements in \Cref{constr:biequiv,constr:lccc-biequiv}.
The main differences arise from the fact we work in univalent foundations and with univalent categories,
while Clairambault and Dybjer work in set-theoretic foundations.
Because of the difference in foundations,
we use comprehension categories, whereas they use categories with families,
and we do not use split fibrations in our development.
This changes how we define the pseudofunctor from categories to comprehension categories (\Cref{constr:finlim-to-compcat}),
because we do not use need to construct an equivalent split fibration~\cite{benabou:1985}.

The methods and results in \Cref{sec:topos} are based on the work by Maietti~\cite{maietti:2005}.
However,
Maietti only proved soundness and completeness for various dependent type theories~\cite[Theorems 5.8 and 5.28]{maietti:2005},
while our goal is to construct biequivalences.
For this reason,
we strengthened our notion of local property (\Cref{def:local-property})
to guarantee pseudofunctoriality.

In \Cref{sec:universes,sec:universe-types},
we considered universes in categories,
and our notions were inspired by the development of universes
in the context of algebraic set theory~\cite{awodey:2014,joyal:1995}.
Specifically, we followed Streicher's notion of universe in toposes~\cite{streicher:2005},
but there is one fundamental difference between our work and Streicher's.
We express closure conditions for universes as operations satisfying suitable substituion laws
instead of properties on some class of small maps.
Similar ideas are used by Angiuli and Gratzer to define universes in categories with families~\cite[Structure 6.4.17]{angiuli2024principles}.

Otten and Spadetto~\cite{otten:2025} constructed a biequivalence between path categories~\cite{vandenberg:2018}
and comprehension categories that support $\sum$-types and axiomatic identity types.
Path categories give a categorical structure in which one can interpret homotopy theory
and intensional versions of type theory.
Specifically,
they consider axiomatic type theory,
which is intensional type theory where computation rules only hold up to propositional equality,
while we only considered extensional type theory.

Ahrens et al.~\cite{ahrens:2018} compared various categorical structures to model dependent type theory in univalent foundations,
and analyzed how these structures transfer along weak and strong equivalences of categories.
More specifically, they studied categories with families, split type-categories, and representable maps of presheaves.
Note that these structures do not satisfy the requirements in \Cref{sec:models},
beause the types in each context are required to be a set.
Moreover, they proved that the types of these structures are equivalent as types,
whereas we construct biequivalences between univalent bicategories.
The latter is more general,
because such biequivalences give rise to equivalences between the types of objects~\cite[Proposition 3.10]{ahrens:2021}.

In \Cref{sec:models}, we argued that categories with families are unable to capture the set model in univalent foundations,
and we explained why use comprehension categories instead.
Another approach was used by Gratzer et al.~\cite{gratzer:2024}.
Instead of looking at all sets,
they constructed the set model of type theory by restricting themselves to \textbf{iterative sets}.
Since the type of iterative sets is a set itself~\cite[Theorem 12]{gratzer:2024},
there is a CwF whose types are given by sets.
However, the category of iterative sets is only a setcategory and not a univalent category,
and this is the main difference between our work and that of Gratzer et al.~\cite{gratzer:2024}.
While we are interested in models in the realm of univalent categories,
they gave a model in the realm of setcategories.

\section{Conclusion}
\label{sec:conclusion}
In this paper, we set out to prove internal language theorems for various classes of univalent categories.
We started by arguing that comprehension categories give a suitable categorical structure for interpreting dependent type theory in univalent foundations (\Cref{sec:models})
while categories with families do not,
and we constructed the univalent bicategories of full comprehension categories (\Cref{def:disp-bicat-full-comp-cat})
and DFL comprehension categories (\Cref{def:dfl-comp-cat}).
After that we proved internal language theorems for univalent categories (\Cref{constr:biequiv,constr:lccc-biequiv})
and extensions to various classes of toposes (\Cref{exa:internal-language-topos}),
including elementary toposes with a universe (\Cref{constr:topos-universe-biequiv}).

Univalence simplified our constructions and proofs,
because univalent categories are identified up to adjoint equivalence.
As a consequence,
we did not use split fibrations,
which simplified \Cref{constr:finlim-to-compcat}.
In addition,
we frequently had to transfer structure and properties along equivalences in our constructions,
which we can do for free for univalent categories.
Univalence also simplified the proofs of \Cref{prop:pointwise-adj-equiv,prop:dfl-compcat-adjequiv}.

There are numerous ways to extend this work.
While we considered some predicative versions of toposes,
such as pretoposes and $\prod$-pretoposes,
our primary focus has been on elementary toposes.
In particular,
we only considered universes for elementary toposes in \Cref{sec:universe-types}.
One interesting direction would be to extend our development to $\prod$W-pretoposes
and to Martin-L\"of categories
with universes~\cite{abbott:2005,berg:2012,moerdijk:2002}.
For such an extension,
W-types have to be considered as well,
and for those,
one could use the similar ideas as in \Cref{sec:topos}.
Specifically,
Moerdijk and Palmgren~\cite[Proposition 3.8]{moerdijk:2000} showed that W-types are preserved under slicing,
and thus one has a local property of W-types in locally Cartesian closed categories.
In addition,
one has to consider additional closure conditions for universes in a predicative setting.
For instance,
the universe must be closed under W-types,
and rather than all monomorphisms, it is required to contain all equalizers and isomorphisms~\cite{vandenberg:2009a}.
Note that such a biequivalence is used in the work by Abbott et al.~\cite{abbott:2005},
who used Martin-L\"of categories as a semantical framework to study containers.
Their development rests on equivalence between type theories and categories as originally established by Seely~\cite{seely:1984},
so that they can use type theory to reason about categories.

Even though there are various instances of univalent comprehension categories,
such as sets and presheaves,
they do not capture every example of interest.
As an example,
we consider the groupoid model~\cite{hofmann:1998}.
The groupoid model can be incarnated in two ways in univalent foundations:
one could either use setgroupoids (i.e., groupoids whose types of objects is a set),
or univalent groupoids (i.e., $1$-types).
While one can construct a univalent comprehension category of setgroupoids,
there is no univalent comprehension category of 1-types.
Since the type of 1-types is only 2-truncated,
one cannot have a univalent category of 1-types by \Cref{prop:hlevel-obj},
and thus it becomes necessary to use univalent bicategories instead.
Hence,
to construct models of type theory based on, for instance, 1-types or types without any truncation requirement,
one needs to consider higher dimensional models~\cite{ahrens:2023,kraus:2021}.
Such models have been considered in both type theory and set-theoretic foundations~\cite{ahrens:2023,garner:2009,kraus:2021,licata:2012},
but suitable internal language theorems for them remain to be developed.

Finally,
we did not discuss syntax and the initial model in this paper.
Higher inductive-inductive types~\cite{kaposi:2020a} give us a way to construct the initial model
by following the construction of Altenkirch, Kaposi, and Xie~\cite{altenkirch:2025}.
However, such a definition presents us with a coherence problem.
Since the syntax is not forced to be a set,
like in the work by Altenkirch and Kaposi~\cite{altenkirch:2016},
one needs to prove that every identity between types is equal.
Altenkirch, Kaposi, and Xie proved this proeprty for a version of Martin-L\"of type theory
with $\prod$-types and a universe~\cite{altenkirch:2025}.
However, a general statement proving coherence for a wider class of type theory remains to be proven.

\section*{Acknowledgements}
  The author greatly benefited from discussions and suggestions from Peter Dybjer.
  In addition, the author thanks Benedikt Ahrens for reviewing the code related to this paper and for providing valuable feedback.
  The author also thanks Yun Liu
  and the anonymous reviewers for LICS for their valuable comments on this paper.
  The author also thanks Daniel Gratzer for comments and discussion to clarify the coherence issue.
  The author gratefully acknowledges the work by the Rocq development team in providing the Rocq proof assistant and surrounding infrastructure, as well as their support in keeping UniMath compatible with Rocq.
  This research was supported by the NWO project ``The Power of Equality'' OCENW.M20.380, which is financed by the Dutch Research Council (NWO).

\bibliographystyle{alphaurl}
\bibliography{literature}
  
\end{document}